\newtheorem{thm}{Theorem}[section]
\newtheorem{cor}{Corollary}[section]
\newtheorem{lem}{Lemma}[section]
\newtheorem{prop}{Proposition}[section]
\newtheorem{rem}{Remark}[section]
\newtheorem{asA}{$A$--\hspace{-1mm}}
\newtheorem{asAt}{$\bar{A}$--\hspace{-1mm}}
\newtheorem{asB}{B--\hspace{-1mm}}
\newtheorem{asAtt}{$\bar{\bar{A}}$--\hspace{-1mm}}
\newcommand{\bE}{\mathbb{E}}
\newcommand{\bR}{\mathbb{R}}
\title[]{Well-posedness and tamed Euler schemes for McKean--Vlasov equations driven by L\'evy noise}
\date{
       \currenttime,
       \today, 
			\quad {(File \tt \jobname.tex})
}
\author[]{Neelima,  Sani Biswas,  Chaman Kumar, Gon\c calo dos Reis and Christoph Reisinger}
\thanks{G.~dos Reis acknowledges support from the \emph{Funda{\c c}$\tilde{\text{a}}$o para a Ci$\hat{e}$ncia e a Tecnologia} (Portuguese Foundation for Science and Technology) through the project UIDB/00297/2020 (Centro de Matem\'atica e Aplica\c c$\tilde{\text{o}}$es CMA/FCT/UNL)}
\begin{document}
\selectlanguage{english}

\begin{abstract}
We prove the well-posedness of solutions % and the propagation of chaos
to McKean--Vlasov stochastic differential equations driven by L\'evy noise under mild assumptions where, in particular, the L\'evy measure is not required to be finite. 
The drift, diffusion and jump coefficients are allowed to be random, can grow super-linearly in the state variable, and all may depend on the marginal law of the solution process. We provide a propagation of chaos result under more
relaxed conditions than those existing in the literature, and consistent with our well-posedness result.

We propose a tamed Euler scheme for the associated interacting particle system %associated with such a McKean--Vlasov SDE driven by L\'evy noise
and prove that the rate of its strong convergence is arbitrarily close to $1/2$.
% when drift, diffusion and jump coefficients may all grow super-linearly in the state component and linearly in the measure component. 
As a by-product, we also obtain the corresponding results on well-posedness, propagation of chaos  and strong convergence of the tamed Euler scheme for McKean--Vlasov stochastic delay differential equations (SDDE) and McKean--Vlasov stochastic differential equations with Markovian switching (SDEwMS), both driven by L\'evy noise. Furthermore, our results on tamed Euler schemes are new even for ordinary SDEs driven by L\'evy noise and with super-linearly growing coefficients. % and thus the results obtained in this paper are of general interest.
\end{abstract}
\maketitle
\noindent
\textbf{Keywords.} McKean--Vlasov equations, superlinear growth, L\'evy noise, propagation of chaos, Euler scheme.
\\ \\
\textbf{AMS Subject Classifications.}  65C05, 65C30, 65C35, 60H35.

\section{Introduction}
Consider a filtered probability space $(\Omega, \mathscr{F}, \{\mathscr{F}_t\}_{\{t\geq 0\}}, \mathbb{P})$  satisfying the usual conditions, \textit{i.e.}, $\mathscr{F}_0$ contains all $\mathbb{P}$-null sets and the filtration $\{\mathscr{F}_t\}_{\{t\geq 0\}}$ is right continuous; $\bE$ stands for the usual expectation operator. 
Assume that $w:=\{w_t\}_{\{t\geq 0\}}$ is an $m$-dimensional Wiener process. 
Let $n_p(dt,dz)$ be  a Poisson random measure on a  $\sigma$-finite measure space $(Z, \mathscr{Z}, \nu)$   and $\tilde{n}_p(dt,dz):=n_p(dt,dz)-\nu(dz)dt$ be its compensator.
Let $\mathcal{P}_2(\mathbb{R}^d)$ be the space of probability measures $\mu$ defined  on a measurable space $(\mathbb{R}^d, \mathscr{B}(\mathbb{R}^d))$ such that $\int_{\mathbb{R}^d}|x|^2 \mu(dx)<\infty$ and equip this space with the $\mathcal{L}_2$-Wasserstein  metric %given by, 
\begin{align*}
\mathcal{W}_2(\mu_1,\mu_2):= \inf_{\pi\in\Pi(\mu_1, \mu_2)} \Big(\int_{\mathbb{R}^d\times\mathbb{R}^d} |x-y|^2\pi(dx,dy)\Big)^{1/2}, 
\end{align*}
where  $\Pi(\mu_1,\mu_2)$ denotes the set of couplings of $\mu_1,\mu_2\in\mathcal{P}_2(\mathbb{R}^d)$. 
It is well known  that $\mathcal{P}_2(\mathbb{R}^d)$ equipped with above metric is a Polish space. 
Denote by $\mathscr{P}$ the predictable $\sigma-$algebra on $\Omega\times \mathbb{R}_+$.
Let $b$ %$b_t(x,\mu)$ 
and $\sigma$ %$\sigma_t(x,\mu)$ 
be $\mathscr{P}\otimes\mathscr{B}(\mathbb{R}^d)\otimes\mathscr{B}(\mathcal{P}_2(\mathbb{R}^d))$-measurable functions taking values in $\mathbb{R}^d$ and $\mathbb{R}^{d\times m}$ respectively.
Also, assume that $\gamma$ %$\gamma_t(x,\mu,z)$ 
is a $\mathscr{P}\otimes\mathscr{B}(\mathbb{R}^d)\otimes\mathscr{B}(\mathcal{P}_2(\mathbb{R}^d))\otimes \mathscr{Z}$-measurable function with values in $\mathbb{R}^d$. 

Let $T>0$ be a fixed constant. Consider the following McKean--Vlasov stochastic differential equation (SDE), 
\begin{equation}\label{eq:sde}
    x_t = x_{0} + \int_{0}^t b_s(x_s, L_{x_s}) ds + \int_{0}^t \sigma_s(x_s, L_{x_s}) dw_s + \int_{0}^t \int_{Z} \gamma_s(x_{s}, L_{x_s}, z) \tilde{n}_p(ds,dz)
\end{equation}
almost surely for any $t \in[0,T]$, where  $x_{0}$ is an $\mathbb{R}^d$-valued $\mathscr{F}_{0}$-measurable  random variable and $\{L_{x_t} \}_{0 \leq t \leq T}$ denotes the flow of deterministic marginal distributions of $x$. 
 Equations of this type were first studied in \cite{mckean1966} and 
 %have  been investigated actively since then, see for example \cite{carmona2018a, carmona2018b,  hammersley2018, hammersley2020, kumar2020b} and references therein. 
 %They
  differ from ordinary SDEs in the sense that the coefficients additionally depend on the marginal laws $L_{x_t}$ of the solution process $x$, \textit{i.e.} the McKean--Vlasov SDE becomes an ordinary SDE when the law $L_{x_t}$ is known.
 
McKean--Vlasov SDEs have received renewed attention over the past years mainly due to the crucial unravelling of certain technical bottlenecks but also due to their usefulness in modelling across many fields such as statistical physics, neuroscience, and finance among others. At the heart of this is that McKean--Vlasov SDEs describe a limiting behaviour of individual particles interacting with each other in a ``mean-field'' manner -- the so-called Propagation of Chaos result. This latter result also paves the way (there are others) for the numerical solution of such McKean--Vlasov SDEs via approximation of the associated system of interacting particles. 

 The numerical approximation and simulation of ordinary SDEs with coefficients which are Lipschitz functions of the state-variables is classical  (see, e.g., \cite{kloeden1992} for time stepping schemes and
 \cite{giles2018} for a survey of multilevel simulation methods).
 The case of locally Lipschitz-continuous coefficients with superlinear growth is also now
 well-studied in the literature.
 Early works include 
\cite{hutzenthaler2012, sabanis2013}, who proposed and analysed tamed Euler schemes for locally Lipschitz drift coefficients of potentially superlinear growth, which was extended in
 \cite{hutzenthaler2015, sabanis2016, beyn2017, kumar2019} to diffusion coefficients of superlinear growth, and in
 \cite{kumar2014} to include delay.
 The authors of
 \cite{gan2013} studied tamed Milstein schemes for superlinear drift, while the results in
 \cite{kumar2017a} allow L\'evy noise and superlinear drift coefficients,  extended to superlinear diffusion coefficients for Euler and Milstein schemes in \cite{kumar2017b} and \cite{kumar2020a}, respectively, while \cite{dareiotis2016} considered a tamed Euler scheme for L\'evy noise and delay, and
\cite{kumar2020b} a tamed Milstein scheme for Markovian switching.
Alternatives to tamed schemes include balanced and implicit schemes (see, e.g., \cite{tretyakov2013}), truncated schemes (see \cite{mao2015}) and adaptive time stepping schemes (see \cite{kelly2019, fang2020}).
 
 The numerical approximation of McKean--Vlasov equations in the continuous case (\textit{i.e.}, when $\gamma\equiv 0$) was
 initiated  in \cite{bossy1997}
 and has been
 investigated further in a number of more recent works. 
 We discuss in the following those most relevant to the present paper.
 % \cite{kumar2020a, kumar2020b, reis2019a}. 

The strong convergence of a tamed Euler scheme and an implicit scheme was established in \cite{reis2019a} when the drift coefficient is allowed to grow super-linearly in the state variable and linearly in the measure component, whereas the diffusion coefficient satisfies Lipschitz continuity in both the variables. 
 In \cite{kumar2020c,bao2020a}, the authors independently proposed a tamed Milstein scheme for McKean--Vlasov SDEs where the drift coefficient grows super-linearly and established that its strong rate of convergence is equal to $1$.
An extension to point delays and an efficient antithetic multilevel Monte Carlo sampling method were given in \cite{bao2020b}. 
Variance reduction via importance sampling techniques was addressed in \cite{reis2019importancesampling}. 
Subsequently, the authors of \cite{kumar2020d} proved well-posedness of McKean--Vlasov SDEs with common noise when both drift and diffusion coefficients are allowed to grow super-linearly in the state variable. Furthermore, they investigated the strong convergence of tamed Euler and Milstein schemes, and proved that their rate of convergence is $1/2$ and $1$, respectively.
A possible alternative to tamed schemes are again adaptive schemes, as shown in \cite{reis2020} for superlinear drift and diffusion coefficients.

%To the best of our knowledge, this is the first general work in the literature on the numerical approximation of McKean--Vlasov SDEs driven by a general L\'evy noise. 
In this work, we address the numerical approximation of McKean--Vlasov SDEs driven by a general L\'evy noise. 
For preceding work, we refer to \cite{agarwal2020fourier}, where the authors develop a Fourier based numerical approximation for McKean--Vlasov SDEs with L\'evy noise that has no law dependency and a drift  coefficient that is affine in the state variable, plus a linear mean-field interaction term (an expectation of the state variable); their approach avoids altogether the Propagation of Chaos we employ here.

Here, we cover the full spectrum of results for the class \eqref{eq:sde}: well-posedness of the McKean--Vlasov equation and the associated interacting particle system, propagation of chaos, numerical strong approximations and applications. Our results cater to the several fields and applications where McKean--Vlasov SDEs with a general L\'evy noise appear. 
To the best of our knowledge, this is the first paper under these general conditions:
\begin{enumerate}
\item
%\sout{Firstly,} 
we  allow all the coefficients to depend on the marginal law $L_{x_t}$ of the solution process $x$, 
\item
 %\sout{secondly,} 
they all (including the jump coefficient) grow super-linearly in the state component with
 \item
%we assume
 a general $\mathcal{W}_2$-Lipschitz (non-linear) measure dependency, and
 %\sout{thirdly,} 
\item 
the L\'evy measure is not assumed to be finite.
 \end{enumerate}
We review our contributions against existing literature next.
 \begin{itemize}
 \item \textit{Well-posedness}. Our first result is on the existence and uniqueness of the McKean--Vlasov SDE \eqref{eq:sde} under more relaxed assumptions (see Theorem \ref{thm:eu}) than those made in the existing literature, \textit{e.g.} \cite{graham1992mckean, jourdain2008, mehri2020} and references therein. 
% More specifically, well-posedness of the equation \eqref{eq:sde} is shown in \cite{jourdain2008} under global Lipschitz conditions on the coefficients in both state and measure variables driven by a square integrable jump process, %the authors also address wellposedness for equations featuring or, 
%in one dimension, a pure jump L\'evy process with infinite intensity. 
More specifically, well-posedness of the equation \eqref{eq:sde} driven by a pure, square integrable L\'evy process (i.e., $b=\sigma=0$) is shown in \cite{jourdain2008} under global Lipschitz conditions on the coefficient $\gamma$ in both state and measure variables. Existence only is shown when the integrability assumptions are dropped, using PDE arguments.
The earlier work \cite{graham1992mckean}, extended by \cite{jourdain2008}, focused on Poisson jumps, Lipschitz coefficients and linear measure dependencies.  More recently, in \cite{mehri2020}, the authors allow super-linearity  of the coefficients in the state variable, but  consider only a specific, linear, measure dependence, with time homogeneous Poisson measures. In this paper, we allow super-linear growth in the state variable and a general measure dependence of all (drift, diffusion and jump) coefficients.  

Our proof uses a different strategy from that given in \cite{mehri2020}. 
We apply a fixed point argument on the space $\mathbb{D}([0,T];  \mathcal{P}_2(\mathbb{R}^d))$ of $\mathcal{P}_2(\mathbb{R}^d)$-valued c\`adl\`ag functions defined on $[0,T]$,  which is a complete metric space under the uniform norm. For this, the flow of marginals $\{L_{x_t}\}_{\{t\in[0,T]\}}$ is shown to be c\`adl\`ag functions.  
This only requires finiteness of $\bE|x_0|^2$  instead of $\bE|x_0|^{2+\epsilon}$ for any $\epsilon>0$, compared to the analogous results in \cite{kumar2020d} for the continuous case. 

\item \textit{Propagation of Chaos} (PoC). For the approach we use in this work, there are typically two stages to the numerical approximation of the McKean--Vlasov SDE \eqref{eq:sde}, as the coefficients depend on all paths (through the measure) of the solution process. 
In the first stage of the discretization, an interacting particle system connected with the McKean--Vlasov SDE \eqref{eq:sde} is shown to converge to the true solution of the McKean--Vlasov SDE \eqref{eq:sde}. 
This is popularly known as the \textit{propagation of chaos}. We present such a result in Section \ref{sec:PoCmain} under more relaxed conditions than those existing in the literature; see above references to \cite{graham1992mckean,jourdain2008} as works studying propagation of chaos for jump processes. 
As reported in \cite{mehri2020}, the only existing result (then) on McKean--Vlasov limits of biological neural networks driven by L\'evy noise was \cite{andreis2018interacting} for local dynamics under global Lipschitz assumptions or local dynamics of gradient type. Our setting allows for a more general measure dependency than \cite{mehri2020}, and further, no numerical approximation procedure is offered there.

The PoC result is of general interest to researchers of mean-field games with jumps (see, e.g., \cite{benazzoli2017mean,andreis2018interacting}), especially in neuroscience, where 
the need for L\'evy noise in McKean--Vlasov SDE models has recently been appreciated (see, e.g., \cite{delarue2015global,ullner2018,mehri2020}).

\item 
\textit{Tamed Euler scheme}. Following the particle approximation, the second stage of the discretization is a time-stepping scheme 
%convergence proof that the time-discretized numerical approximation of the particle system converges to the solution 
for the resulting coupled SDE system.
In this paper, we specifically propose a tamed Euler scheme %for the intereacting particle system associated with the McKean--Vlasov SDE \eqref{eq:sde} 
and prove that its strong rate of convergence (in mean-square sense) is arbitrarily close to $1/2$, 
without assuming linear growth in the (\textit{viz.} drift, diffusion and jump) coefficients or finiteness of the L\'evy measure.
To achieve this, we identify a suitable coercivity assumption on the coefficients of equation \eqref{eq:sde} and tamed scheme \eqref{eq:scheme}  (see Assumptions $A$--\ref{as:coercivity}, $A$--\ref{as:coercivity:p0} and  Assumption $B$--\ref{asb:growth:n}, respectively) so that the one-step error has a desired rate (see Lemma \ref{lem:one-step:mb}), which is required at several points of the analysis. 
These assumptions  appear for the first time in the literature and are possibly  optimal (even when the coefficients do not depend on the measure). 
%\textcolor{red}{
In this generality,  we had to develop new techniques to bound the moments of the tamed Euler scheme.  
%\textcolor{red}{
Due to the  presence of jumps, the rate of the one-step error in the $\mathcal{L}^q$-norm drops to $1/q$ and further decreases if one applies taming (\textit{i.e.}, Assumption $B$--\ref{asb:growth:n}) to super-linearly growing jump coefficients; see Lemma \ref{lem:one-step:mb}. In the proof of Lemma \ref{lem:scheme:mb}, the positive powers of $n$ coming from Assumption $B$--\ref{asb:growth:n} must be controlled through the rate observed in Lemma \ref{lem:one-step:mb}. 
But the rate in Lemma \ref{lem:one-step:mb} itself decreases if one takes higher  powers of $n$ in Assumption $B$--\ref{asb:growth:n}.
 Further, if we decrease the powers of $n$ in Assumption $B$--\ref{asb:growth:n}, then the moment  (\textit{i.e.} the value of $p_0$ in Assumption $A$--\ref{as:initial}) needed for Assumption $B$--\ref{asb:diff:rate} also increases, which can lead to exclusion of several examples of practical importance. 
 This exhibits an inter-play between Assumption $B$--\ref{asb:growth:n}, Assumption  $B$--\ref{asb:diff:rate} and the corresponding one-step error that one obtains in Lemma \ref{lem:one-step:mb}, and which is tackled by identifying an appropriate taming, without assuming finiteness of the L\'evy measure.
We give examples of explicit forms of taming in Section 5 which fit into our framework.   
%}
%Also, strong taming can lead to very high moment requirements, which may exclude many real life examples from consideration.  
% This interplay between the one-step error and the form of taming along with the moment requirements, arising due to the presence of jumps, are tackled by identifying  appropriate taming without assuming  finiteness of the L\'evy measure.
 %} 
 
In \cite{kumar2017b}, the authors studied a tamed Euler scheme for an ordinary SDE driven by L\'evy noise when both drift and diffusion coefficients are allowed to grow super-linearly, but the jump coefficient grows only linearly, while
in \cite{chen2019meansquare}, the authors also allow the jump coefficients to grow super-linearly, but assume that the L\'evy measure is finite, which restricts the class of L\'evy processes severely. 

To the best of our knowledge, this is the first paper under these general conditions. 
%As an application of the general framework, which allows randomness of the coefficients (beyond the state dependence), we give and analyse tamed Euler schemes for McKean--Vlasov SDDE and McKean--Vlasov SDEwMS, both driven by L\'evy noise. 
\item 
\textit{Applications}.
Throughout this paper, we assume that the coefficients of McKean--Vlasov SDE \eqref{eq:sde} are random and hence all the results obtained here can be applied to McKean--Vlasov SDEs with Markovian switching (SDEwMS) and McKean--Vlasov SDEs with delays (SDDE), both driven by L\'evy noise.  We motivate and discuss these cases in Section 5. 
Our proof uses a different strategy from that given in [38].
\item Finally, the technique developed in this paper can be extended to higher order schemes, which is ongoing work. 
 \end{itemize}

\subsection*{Notations}
The notation $|\cdot|$ is used for both Euclidean and Hilbert-Schmidt norms, which should not cause any confusion in the reader's mind, while
$xy$ denotes the inner product of $x,y\in\mathbb{R}^d$. We use $\lfloor \cdot\rfloor$ to denote the floor function and $\delta_x$ for the Dirac measure centred at $x\in\mathbb{R}^d$.
Also, $\mathscr{B}(A)$ is the Borel $\sigma$-algebra of a topological space $A$. 
For brevity of space, we use  $x$ for the solution of  the McKean--Vlasov SDE as well as for a point in $\mathbb{R}^d$. This is clear from the context and should not cause confusion.  
Further, $\sigma^*$ stands for transpose of the matrix $\sigma$. 
Also, $\mathbb{N}$ and $\bR$ denote the set of natural and real numbers respectively. 
Lastly, $K$ stands for a generic positive constant and can vary from place to place.

\section{Existence, Uniqueness and Moment Bound}
In this section, we work under the following set of assumptions. Let $p_0\geq 2$ be a fixed constant. 
%--------------------
\begin{asA} \label{as:initial}
$\mathbb{E}|x_{0}|^{p_0} < \infty$. 
\end{asA}
%-----------------
\begin{asA} \label{as:coercivity}
There exist a constant $L>0$ and a sequence $\{M_t\}_{t \in [0,T]}$ of $\mathscr{F}_0$-measurable non-negative random variables satisfying $\sup_{t \in [0,T]}\mathbb{E}M_t<\infty$  such that
\begin{align*}
2  xb_t(x, \mu) +  |\sigma_t(x,\mu)|^2 +   \int_Z |\gamma_t(x,\mu,z)|^2 \nu(dz) \leq L\big\{M_t+|x|^{2}+\mathcal{W}_2^{2}(\mu,\delta_0) \big\}
\end{align*}
almost surely for any $t\in[0,T]$, $x\in\mathbb{R}^d$  and $\mu \in \mathcal{P}_2(\mathbb{R}^d)$. 
\end{asA}
%----------------
\begin{asA} \label{as:monotonicity}
There exists a constant $L>0$ such that 
\begin{align*}
2(x-\bar{x})(b_t(x, \mu)-b_t(\bar{x}, \bar{\mu})) & + |\sigma_t(x,\mu)-\sigma_t(\bar{x},\bar{\mu})|^2 +\int_Z |\gamma_t(x,\mu,z)-\gamma_t(\bar{x},\bar{\mu},z)|^2 \nu(dz) 
\\
& \leq L\big\{|x-\bar{x}|^2 + \mathcal{W}_2^2(\mu,\bar{\mu}) \big\}
\end{align*}
almost surely for any $t\in[0,T]$, $x, \bar{x}\in\mathbb{R}^d$ and $\mu, \bar{\mu} \in \mathcal{P}_2(\mathbb{R}^d)$. 
\end{asA}
%----------------
\begin{asA} \label{as:continuity}											
For any $t\in[0,T]$, %and almost surely for all  $\omega\in\Omega$,  
$b_t(x,\mu)$ is almost surely a continuous function of $x\in\mathbb{R}^d$ and $\mu\in\mathcal{P}_2(\mathbb{R}^d)$.

\end{asA}
%----------------
\begin{asA} \label{as:coercivity:p0}
There exist a constant $L>0$ and a sequence $\{\bar{M}_t\}_{t \in [0, T]}$ of $\mathscr{F}_0$-measurable non-negative random variables satisfying $\sup_{ t \in [0, T]}\mathbb{E}\bar{M}_t<\infty$ such that 
\begin{align*}
2|x&|^{p_0-2}  xb_t(x, \mu) +(p_0-1) |x|^{p_0-2} |\sigma_t(x,\mu)|^2 
\\
& + 2 (p_0-1) \int_Z |\gamma_t(x,\mu,z)|^2 \int_0^1 (1-\theta)| x+\theta \gamma_t(x,\mu,z)|^{p_0-2} d\theta \nu(dz) \leq L\big\{\bar{M}_t+|x|^{p_0}+\mathcal{W}_2^{p_0}(\mu,\delta_0) \big\}
\end{align*}
almost surely for any $t\in[0,T]$, $x\in\mathbb{R}^d$ and $\mu \in \mathcal{P}_2(\mathbb{R}^d)$. 
\end{asA}
%------------
%\begin{asA} \label{as:gamma:p0}  
%There exists a constant $L>0$ such that 
%\begin{align*}
%\int_Z |\gamma_t(x,\mu, z)|^{p_0} \nu(dz) \leq L (1+|x|)^{p_0}
%\end{align*}
%almost surely $x\in\mathbb{R}^d$ and $\mu\in\mathcal{P}_2(\mathbb{R}^d)$. 
%\end{asA}
%-------------
%\begin{rem} \label{rem:cadlag}
%From Proposition 1.2 in \cite{jourdain2008},  the mapping $t\in [0,T] \mapsto L_{x_t}$ is c\`adl\`ag. Hence, for any fixed $x\in\mathbb{R}^d$ and $\omega\in\Omega$, the mappings $t\in[0,T]\mapsto \sigma_t(x,\omega, L_{x_t})$ and $t\in[0,T]\mapsto \int_Z \gamma_t(x,\omega, L_{x_t}, z) \nu(dz)$ are c\`adl\`ag. 
%\end{rem}
%--------------
\begin{thm} \label{thm:eu}
Let Assumptions $A$--\ref{as:initial} (with $p_0=2$), $A$--\ref{as:coercivity},  $A$--\ref{as:monotonicity} and A-\ref{as:continuity} be satisfied. Then, there exists a unique c\`adl\`ag process $x$ taking values in $\mathbb{R}^d$ satisfying the  McKean--Vlasov SDE \eqref{eq:sde} such that
\begin{align*}
\sup_{t\in[0,T]}\mathbb{E}|x_t|^{2} \leq K,
\end{align*}
where $K:=K(\mathbb{E}|x_0|^{2},m,d,L, T)$ is a positive constant. Moreover, if Assumptions $A$--\ref{as:initial} and $A$--\ref{as:coercivity:p0} hold for  $p_0 > 2$, then 
\begin{align*}
\sup_{t\in[0,T]}\mathbb{E}|x_t|^{p_0} \leq K,
\end{align*}
where $K:=K(\mathbb{E}|x_0|^{p_0},m,d,p_0,L, T)$ is a positive constant. 
\end{thm}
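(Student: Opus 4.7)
The plan is to use a fixed-point argument on the complete metric space of $\mathcal{P}_2(\mathbb{R}^d)$-valued c\`adl\`ag flows $\mathbb{D}([0,T];\mathcal{P}_2(\mathbb{R}^d))$ equipped with the weighted metric $d_\lambda(\mu,\bar\mu):=\sup_{t\in[0,T]}e^{-\lambda t}\mathcal{W}_2(\mu_t,\bar\mu_t)$, for $\lambda>0$ to be chosen. Given any fixed flow $\mu$, substituting $\mu_s$ for $L_{x_s}$ in \eqref{eq:sde} produces an ordinary It\^o SDE with random but monotone, coercive and continuous coefficients, for which existence and uniqueness of a c\`adl\`ag solution $x^\mu$ follows from classical theory (Euler/Picard approximations combined with the monotonicity from Assumption $A$--\ref{as:monotonicity}).

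Next, I would derive the a priori $\mathcal{L}^2$ moment bound by applying It\^o's formula for semimartingales with jumps to $|x^\mu_t|^2$ up to stopping times $\tau_n:=\inf\{t:|x^\mu_t|>n\}$, and invoking Assumption $A$--\ref{as:coercivity} to control the drift, diffusion and compensated Poisson contributions; the martingale integrals vanish under expectation on $[0,t\wedge\tau_n]$, and Gr\"onwall's inequality followed by Fatou in $n$ gives a uniform bound depending only on $\mathbb{E}|x_0|^2$ and $\sup_{t}\mathcal{W}_2^2(\mu_t,\delta_0)$. Regularity of the flow $t\mapsto L_{x^\mu_t}$ in $\mathcal{P}_2(\mathbb{R}^d)$ follows from the c\`adl\`ag property of $x^\mu$, the uniform integrability provided by the moment estimate, and the inequality $\mathcal{W}_2^2(L_{x^\mu_s},L_{x^\mu_t})\le \mathbb{E}|x^\mu_s-x^\mu_t|^2$. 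Hence $\Phi:\mu\mapsto L_{x^\mu}$ maps $\mathbb{D}([0,T];\mathcal{P}_2(\mathbb{R}^d))$ into itself.

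The contraction property of $\Phi$ is obtained by applying It\^o to $|x^\mu_t-x^{\bar\mu}_t|^2$. Using Assumption $A$--\ref{as:monotonicity} and $\mathcal{W}_2^2(L_{x^\mu_t},L_{x^{\bar\mu}_t})\le\mathbb{E}|x^\mu_t-x^{\bar\mu}_t|^2$, one arrives at
\begin{align*}
\mathbb{E}|x^\mu_t-x^{\bar\mu}_t|^2 \;\le\; L\int_0^t\big(\mathbb{E}|x^\mu_s-x^{\bar\mu}_s|^2 + \mathcal{W}_2^2(\mu_s,\bar\mu_s)\big)\,ds,
\end{align*}
after which Gr\"onwall and a sufficiently large choice of $\lambda$ give $d_\lambda(\Phi(\mu),\Phi(\bar\mu))\le q\,d_\lambda(\mu,\bar\mu)$ with $q<1$. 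Banach's fixed-point theorem then yields a unique fixed point, which is the McKean--Vlasov solution; the same estimate applied between any two solutions of \eqref{eq:sde} gives pathwise uniqueness.

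For the higher-moment bound under Assumption $A$--\ref{as:coercivity:p0}, I would apply It\^o's formula to $|x_t|^{p_0}$: the second-order Taylor remainder in the Poisson part expands as
\begin{align*}
|x_{s-}+\gamma|^{p_0}-|x_{s-}|^{p_0}-p_0|x_{s-}|^{p_0-2}x_{s-}\gamma \;=\; p_0(p_0-1)|\gamma|^2\!\int_0^1\!(1-\theta)|x_{s-}+\theta\gamma|^{p_0-2}\,d\theta,
\end{align*}
so that combining the It\^o terms with Assumption $A$--\ref{as:coercivity:p0} gives exactly the right-hand side $L\{\bar M_s+|x_s|^{p_0}+\mathcal{W}_2^{p_0}(L_{x_s},\delta_0)\}$; since $\mathcal{W}_2^{p_0}(L_{x_s},\delta_0)\le \mathbb{E}|x_s|^{p_0}$, Gr\"onwall closes the estimate and yields $\sup_t\mathbb{E}|x_t|^{p_0}\le K$. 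The main technical obstacle I expect is the passage of the expectation inside the It\^o formula when the coefficients grow super-linearly: this has to be justified by the stopping-time localisation $\tau_n:=\inf\{t:|x_t|>n\}$, with Fatou used on the left-hand side and monotone/dominated convergence on the right-hand side, making sure in particular that the compensated Poisson contribution remains a true martingale up to $t\wedge \tau_n$ (it does, because $\gamma$ is locally $\mathcal{L}^2$ thanks to Assumption $A$--\ref{as:coercivity}). The same localisation underlies the contraction and uniqueness arguments.
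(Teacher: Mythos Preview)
Your proof is correct and follows essentially the same approach as the paper: a fixed-point argument for the map $\Phi:\mu\mapsto L_{x^\mu}$ on $\mathbb{D}([0,T];\mathcal{P}_2(\mathbb{R}^d))$, with It\^o's formula plus Assumption $A$--\ref{as:monotonicity} yielding the key estimate $\mathbb{E}|x^\mu_t-x^{\bar\mu}_t|^2\le L\int_0^t(\mathbb{E}|x^\mu_s-x^{\bar\mu}_s|^2+\mathcal{W}_2^2(\mu_s,\bar\mu_s))\,ds$, and It\^o plus the Taylor remainder identity combined with Assumption $A$--\ref{as:coercivity:p0} for the $p_0$-moment bound. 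The only difference is cosmetic: you equip the space with an exponentially weighted uniform metric and choose $\lambda$ large to make $\Phi$ itself a contraction, whereas the paper uses the unweighted uniform metric $\rho_T$ and iterates the integral inequality to show that some power $\Phi^j$ is a contraction---these two devices are standard equivalents of one another.
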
 
%---------------
\begin{proof}
Consider the space $\mathbb{D}([0,T];  \mathcal{P}_2(\mathbb{R}^d))$ of $\mathcal{P}_2(\mathbb{R}^d)$-valued c\`adl\`ag functions defined on $[0,T]$. For any $\mu, \bar{\mu} \in \mathbb{D}([0,T];  \mathcal{P}_2(\mathbb{R}^d)) $, we define $\rho_T(\mu, \bar{\mu}):= \sup_{t \in [0,T]}\mathcal{W}_2(\mu_t, \bar{\mu}_t) $. It can be seen that  the space $\mathbb{D}([0,T];  \mathcal{P}_2(\mathbb{R}^d))$  is a complete metric space under $\rho_T$ (see, e.g., Lemma \ref{lem:cms} in the appendix). 
Consider a map $\Phi:\mathbb{D}([0,T];  \mathcal{P}_2(\mathbb{R}^d)) \mapsto  \mathbb{D}([0,T]; \mathcal{P}_2(\mathbb{R}^d))$ defined by $\Phi(\mu):=\{L_{x^\mu_t}\}_{t\in[0,T]}$, where $\{L_{x^\mu_t}\}_{t\in[0,T]}$ is the flow of marginal laws of the process satisfying the SDE
\begin{equation} \label{eq:sde:eu}
    x_t^\mu = x_0 + \int_0^t b_s(x_s^\mu, \mu_s) ds + \int_0^t \sigma_s(x_s^\mu, \mu_s) dw_s + \int_0^t \int_{Z} \gamma_s(x_{s}^\mu, \mu_s, z) \tilde{n}_p(ds,dz)
\end{equation}
almost surely for any $t \in[0,T]$. 
The above equation is  an ordinary SDE driven by L\'evy noise, with coefficients additionally dependent on the time parameter through $\mu_t$, and its  existence and uniqueness is known from \cite[Theorem 1]{gyongy1980}. 

First we observe that  $\Phi$ is well-defined by showing that $\{L_{x^\mu_t}\}_{t\in[0,T]} \in \mathbb{D}([0,T];  \mathcal{P}_2(\mathbb{R}^d))$. 
Notice that the solution of SDE \eqref{eq:sde:eu} is a c\`adl\`ag process, \textit{i.e.},  $x^\mu \in \mathbb{D}([0,T];\mathbb{R}^d)$. 
Indeed, for every $t\in[0,T]$, $L_{x^\mu_t}\in\mathcal{P}_2(\mathbb{R}^d)$ because 
\begin{align*}
\sup_{t\in[0,T]} \int_{\mathbb{R}^d} |x|^2 \mu_t(dx) <\infty, \quad \mbox{and} \quad \int_{\mathbb{R}^d} |x|^2 L_{x_t^\mu}(dx)=\mathbb{E}|x_t^\mu|^2 \leq \sup_{t\in[0,T]} \mathbb{E}|x_t^\mu|^2 \leq K,  
\end{align*}
where $K:=K(d, m, \mathbb{E}|x_0|^2, L)$ is a positive constant (see \cite[Lemma 1]{kumar2017b}). 
Also, $\{L_{x^\mu_t}\}_{t\in[0,T]} $ is   c\`adl\`ag since  $\mathcal{W}_2^2(L_{x^\mu_t}, L_{x^\mu_s})\leq \mathbb{E}|x_t^\mu-x_s^\mu|^2$,
which converges to $0$ as $s \downarrow t$, and $\mathcal{W}_2^2(L_{x^\mu_{t}}, L_{x^\mu_s}) \leq \mathbb{E}|x_{t-}^\mu-x_s^\mu|^2$, which converges to $0$ as $s \uparrow t-$. 

We proceed to prove that $\Phi^{j}$ is a contraction for sufficiently large $j$. Let $\mu, \bar{\mu} \in \mathbb{D}([0,T]; \mathcal{P}_2(\mathbb{R}^d))$ and use It\^o's formula to obtain 
\begin{align*}
\mathbb{E}  |x_t^\mu &-x_t^{\bar{\mu}}|^2   =  2 \mathbb{E}  \int_0^t (x_s^\mu-x_s^{\bar{\mu}})(b_s(x_s^\mu, \mu_s)-b_s(x_s^{\bar{\mu}}, \bar{\mu}_s)) ds
\\
& +  2 \mathbb{E}  \int_0^t  (x_s^\mu-x_s^{\bar{\mu}})(\sigma_s(x_s^\mu, \mu_s)-\sigma_s(x_s^{\bar{\mu}}, \bar{\mu}_s)) dw_s  +  \mathbb{E}    \int_0^t  \big|\sigma_s(x_s^\mu, \mu_s)-\sigma_s(x_s^{\bar{\mu}}, \bar{\mu}_s)\big|^2 ds 
\\
& + 2 \mathbb{E}    \int_0^t \int_Z \big(x_s^\mu-x_s^{\bar{\mu}}\big)\big(\gamma_s(x_s^\mu, \mu_s, z)-\gamma_s(x_s^{\bar{\mu}}, \bar{\mu}_s, z)\big) \tilde{n}_p(ds,dz) 
\\
&+ \mathbb{E}    \int_0^t \int_Z \big\{|x_s^\mu-x_s^{\bar{\mu}}+\gamma_s(x_s^\mu, \mu_s, z)-\gamma_s(x_s^{\bar{\mu}}, \bar{\mu}_s, z)|^{2} -|x_s^\mu-x_s^{\bar{\mu}}|^{2}
\\
&\quad-2 (x_s^\mu-x_s^{\bar{\mu}}) (\gamma_s(x_s^\mu, \mu_s, z)- \gamma_s(x_s^{\bar{\mu}}, \bar{\mu}_s, z))\big\} n_p(ds,dz) 
\\
 = &  \mathbb{E}  \int_0^t \big\{ 2(x_s^\mu-x_s^{\bar{\mu}})(b_s(x_s^\mu, \mu_s)-b_s(x_s^{\bar{\mu}}, \bar{\mu}_s) ) +  \big|\sigma_s(x_s^\mu, \mu_s)-\sigma_s(x_s^{\bar{\mu}}, \bar{\mu}_s )\big|^2 
\\
& \quad + \int_Z | \gamma_s(x_s^\mu, \mu_s, z)-\gamma_s(x_s^{\bar{\mu}}, \bar{\mu}_s, z)|^{2} \nu(dz) \big\}ds,
\end{align*}
which on using Assumption A-\ref{as:monotonicity} yields
\begin{align*}
\mathbb{E}    |x_t^\mu-x_t^{\bar{\mu}}|^2 \leq L \mathbb{E}  \int_0^t \big\{|x_s^\mu-x_s^{\bar{\mu}}|^2 + \mathcal{W}^2_2(\mu_s, \bar{\mu}_s) \big\} ds <\infty
\end{align*}
and then, by using Gr\"onwall's lemma, one obtains
\begin{align*}
\mathbb{E}    |x_t^\mu-x_t^{\bar{\mu}}|^2 \leq  e^{L t} \int_0^t \mathcal{W}^2_2(\mu_s, \bar{\mu}_s) ds
\end{align*}
for any $t\in[0,T]$. Thus, we have, 
\begin{align*}
\rho^2_t ( \Phi(\mu), \Phi( \bar{\mu} ) )& = \rho^2_t ( L_{x^\mu},  L_{x^{\bar{\mu}}}  ) = \sup_{s\in[0,t]} \mathcal{W}_2^2(L_{x_s^\mu},  L_{x_s^{\bar{\mu}}} ) \leq  \sup_{s\in[0,t]} \mathbb{E}    |x_s^\mu-x_s^{\bar{\mu}}|^2
\\
&\leq K \int_0^t \sup_{r\in[0,s]}\mathcal{W}^2_2(\mu_r, \bar{\mu}_r) ds= e^{L t} \int_0^t \rho_s^2(\mu, \bar{\mu}) ds
\end{align*}
 for any $t\in[0,T]$ where $K=e^{LT}$. Denoting by $\Phi^N$ the $n$-fold composition of $\Phi$ and  iterating the above inequality, one obtains 
\begin{align*}
\rho^2_T ( \Phi^j(\mu), \Phi^j( \bar{\mu} ) ) & \leq K^j \int_0^T \int_0^{t_1}\cdots \int_0^{t_{j-1}} \rho_{t_j}^2(\mu, \bar{\mu}) dt_j \cdots dt_1
\\
& \leq K^j \int_0^T \frac{(T-t_j)^{j-1}}{(j-1)!} \rho_{t_j}^2 (\mu,\bar{\mu}) dt_j \leq \frac{(KT)^j}{j!} \rho_{T}^2(\mu, \bar{\mu}). 
\end{align*}
For $j$ large enough, $\Phi^j$ is a contraction and hence $\Phi$ has a unique fixed point, which is the unique solution of the McKean--Vlasov SDE \eqref{eq:sde}.  

%\textcolor{red}
{If additionally Assumptions $A$--\ref{as:initial} and $A$--\ref{as:coercivity:p0} hold for $p_0>2$, then one can bound  the $p_0$-th moment  of the solution of  the McKean--Vlasov SDE \eqref{eq:sde}. By It\^o's formula, one obtains,}
\begin{align*}
\mathbb{E}|x_t|^{p_0} & =\mathbb{E}|x_0|^{p_0} + p_0 \mathbb{E} \int_0^t |x_s|^{p_0-2}  x_s b_{s} (x_{s}, L_{x_s}) ds  + p_0 \mathbb{E} \int_0^t |x_s|^{p_0-2}  x_s \sigma_{s} (x_{s}, L_{x_s}) dw_s 
\\
& + \frac{p_0(p_0-2)}{2} \mathbb{E} \int_0^t |x_s|^{p_0-4} |\sigma_{s} (x_s, L_{x_s})^* x_s|^2 ds +\frac{p_0}{2} \mathbb{E} \int_0^t |x_s|^{p_0-2} |\sigma_s (x_s, L_{x_s})|^2 ds 
\\
& + p_0 \mathbb{E} \int_0^t \int_Z |x_s|^{p_0-2}  x_s \gamma_s (x_s, L_{x_s}, z) \tilde{n}_p(ds, dz) 
\\
&+  \mathbb{E} \int_0^t \int_Z \big\{|x_s+\gamma_s (x_s, L_{x_s}, z)|^{p_0}- |x_s|^{p_0} - p_0 |x_s|^{p_0-2} x_s \gamma_s (x_s, L_{x_s}, z)\big\} n_p(ds,dz)
\end{align*}
for any $t\in[0,T]$. By the remainder formula, 
\begin{align}
|y|^{p_0} & = |a|^{p_0}+p_0 |a|^{p_0-2} a (y-a) +  p_0(p_0-1) \int_{0}^1 (1-\theta) |y-a|^2 |a+\theta(y-a)|^{p_0-2} d\theta \label{eq:remainder}
\end{align}
for any $y\in\mathbb{R}^d$ and $a\in\mathbb{R}^d$. Hence, on using the above estimate, 
\begin{align*}
\mathbb{E}|x_t|^{p_0} &   \leq  \mathbb{E}|x_0|^{p_0} + \frac{p_0}{2} \mathbb{E} \int_0^t \Big\{2|x_s|^{p_0-2}  x_s b_{s} (x_{s}, L_{x_s})    + (p_0-1) |x_s|^{p_0-2} |\sigma_{s} (x_s, L_{x_s})|^2 
\\
&+2(p_0-1) \int_Z |\gamma_s (x_s, L_{x_s}, z)|^{2} \int_0^1 (1-\theta) |x_s +\theta  \gamma_s (x_s, L_{x_s}, z)|^{p_0-2} d\theta \nu(dz) \Big\}  ds,
\end{align*}
and then the application of Assumption $A$--\ref{as:coercivity:p0} yields
\begin{align*}
\mathbb{E}|x_t|^{p_0} &\leq  \mathbb{E}|x_0|^{p_0} + K \mathbb{E} \int_0^t \{ \bar{M}_t+|x_s|^{p_0}+\mathcal{W}_2^{p_0}(L_{x_s}, \delta_0)\} ds
\\
& \leq \mathbb{E}|x_0|^{p_0} + K \sup_{t\in[0,T]}\mathbb{E}  \bar{M}_t + K  \int_0^t \mathbb{E} |x_s|^{p_0} ds + K  \int_0^t (\mathbb{E} |x_s|^2 )^{p_0/2} ds
\end{align*}
for any $t\in[0,T]$. By using H\"older's inequality and Gr\"onwall's inequality, the proof is completed.  
\end{proof}

\section{Propagation of Chaos}
\label{sec:PoCmain}
Let us now introduce the non-interacting particle system associated with the McKean--Vlasov SDE \eqref{eq:sde}. %\textcolor{red}
{Suppose that $w^{1}, \ldots,w^N$ are $N\in\mathbb{N}$ independent and identically distributed Wiener processes. }

The state $x^i$ of the particle $i$ is given by 
\begin{align} \label{eq:non-interacting}
x_t^{i}=x_0^{i} +\int_{0}^t b_s(x_s^{i}, L_{x_s^i}) ds + \int_{0}^t \sigma_s(x_s^{i}, L_{x_s^i}) dw_s^i + \int_0^t \int_Z \gamma_s(x_{s}^{i}, L_{x_{s}^i}, z) \tilde{n}_p^i(ds,dz)
\end{align}
and the associated interacting particle system is given by 
\begin{align} \label{eq:interacting}
x_t^{i,N}=x_0^{i} +\int_{0}^t b_s(x_s^{i, N}, \mu_s^{x,N}) ds + \int_{0}^t \sigma_s(x_s^{i, N}, \mu_s^{x,N}) dw_s^i + \int_0^t \int_Z \gamma_s(x_{s}^{i, N}, \mu_{s}^{x,N}, z) \tilde{n}_p^i(ds,dz)
\end{align}
almost surely, where $\mu_t^{x,N}$ is an empirical measure of $N$ particles given by, 
\begin{align*}
\mu_t^{x,N}(\cdot)= \frac{1}{N} \sum_{i=1}^N \delta_{x_t^{i,N}}(\cdot)
\end{align*}
 almost surely for any $t\in[0,T]$ and $N\in\mathbb{N}$. Notice that $[0,T] \ni t \mapsto \mu_t^{x,N}\in\mathcal{P}_2(\mathbb{R}^d)$ is a  c\`adl\`ag process almost surely. We next state and prove the PoC result.
\begin{prop} \label{prop:propagation}
Let Assumptions A-\ref{as:initial} to A-\ref{as:coercivity:p0} be satisfied with $p_0>4$. 
Then, the interacting particle system \eqref{eq:interacting} is wellposed and converges to the non-interacting particle system \eqref{eq:non-interacting} with a rate of convergence given by
\[ 
\sup_{i\in\{1,\ldots,N\}}\sup_{t\in[0,T]}\mathbb{E}|x^i_t-x^{i,N}_t|^2\leq K 
\begin{cases}
N^{-1/2}, &  \mbox{ if }  d<4, \\
N^{-1/2} \ln(N), &  \mbox{ if } d=4, \\
N^{-2/d}, &  \mbox{ if } d>4,
\end{cases}
\]
for any $N\in\mathbb{N}$, where the constant $K>0$ does not depend on $d$ and $N$. 
\end{prop}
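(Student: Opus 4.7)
\medskip
\noindent\textbf{Proof plan.}

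The plan is to mimic the classical synchronous-coupling argument for propagation of chaos, adapted to the jump setting, and to close the estimate by invoking the Fournier–Guillin rate of convergence of the empirical measure of i.i.d.\ samples in $\mathcal{W}_2$. First I would dispatch well-posedness of the interacting system \eqref{eq:interacting}: viewed as an $Nd$-dimensional SDE, the drift, diffusion and jump coefficients at particle $i$ are $(b,\sigma,\gamma)$ evaluated at the empirical measure $\mu^{x,N}_s$. Since the map $(x^1,\dots,x^N)\mapsto \frac{1}{N}\sum_j \delta_{x^j}$ is 1-Lipschitz from $(\mathbb{R}^d)^N$ (with $\tfrac{1}{N}\sum |x^j|^2$-norm) into $(\mathcal{P}_2(\mathbb{R}^d),\mathcal{W}_2)$, Assumptions $A$--\ref{as:coercivity}, $A$--\ref{as:monotonicity}, $A$--\ref{as:continuity}, $A$--\ref{as:coercivity:p0} descend verbatim to the product system, and Theorem \ref{thm:eu} (applied dimension-wise, or directly via \cite{gyongy1980}) yields existence, uniqueness and the uniform $p_0$-moment bound $\sup_{i,N}\sup_{t\le T}\mathbb{E}|x^{i,N}_t|^{p_0}\le K$.

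Next I would fix $i$ and apply Itô's formula for jump processes to $|x^i_t-x^{i,N}_t|^2$, exactly as in the proof of Theorem \ref{thm:eu}; the local martingale terms vanish in expectation (by the moment bounds), and what survives is the monotonicity quantity
\begin{align*}
2(x^i_s-x^{i,N}_s)\bigl(b_s(x^i_s,L_{x^i_s})-b_s(x^{i,N}_s,\mu^{x,N}_s)\bigr)
&+\bigl|\sigma_s(x^i_s,L_{x^i_s})-\sigma_s(x^{i,N}_s,\mu^{x,N}_s)\bigr|^2\\
&+\int_Z \bigl|\gamma_s(x^i_s,L_{x^i_s},z)-\gamma_s(x^{i,N}_s,\mu^{x,N}_s,z)\bigr|^2\nu(dz).
\end{align*}
By Assumption $A$--\ref{as:monotonicity}, this is bounded by $L\bigl\{|x^i_s-x^{i,N}_s|^2+\mathcal{W}_2^2(L_{x^i_s},\mu^{x,N}_s)\bigr\}$. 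Introducing the auxiliary empirical measure $\bar{\mu}^{x,N}_s:=\frac{1}{N}\sum_{j=1}^N\delta_{x^j_s}$ built from the non-interacting particles, the triangle inequality gives
\begin{align*}
\mathcal{W}_2^2(L_{x^i_s},\mu^{x,N}_s)\;\le\; 2\,\mathcal{W}_2^2(L_{x^i_s},\bar{\mu}^{x,N}_s)\;+\;2\,\mathcal{W}_2^2(\bar{\mu}^{x,N}_s,\mu^{x,N}_s),
\end{align*}
and the synchronous coupling $(x^j_s,x^{j,N}_s)_{j=1}^N$ yields $\mathcal{W}_2^2(\bar{\mu}^{x,N}_s,\mu^{x,N}_s)\le \frac{1}{N}\sum_{j=1}^N|x^j_s-x^{j,N}_s|^2$.

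Taking expectations and using exchangeability (the law of $(x^j,x^{j,N})$ is independent of $j$), the $\frac{1}{N}\sum_j\mathbb{E}|x^j_s-x^{j,N}_s|^2$ term collapses to $\mathbb{E}|x^i_s-x^{i,N}_s|^2$, giving
\begin{align*}
\mathbb{E}|x^i_t-x^{i,N}_t|^2 \;\le\; K\int_0^t \mathbb{E}|x^i_s-x^{i,N}_s|^2\,ds \;+\; K\int_0^t \mathbb{E}\,\mathcal{W}_2^2\bigl(L_{x^i_s},\bar{\mu}^{x,N}_s\bigr)\,ds.
\end{align*}
Grönwall's inequality then reduces the proposition to bounding the empirical Wasserstein error. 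Since the $x^i$ are i.i.d.\ with $\sup_{s\le T}\mathbb{E}|x^i_s|^{p_0}\le K$ and $p_0>4$, the Fournier–Guillin quantitative bound (Theorem 1 in \cite{fournier2015rate} in their notation, and available under the moment assumption $p_0>4$) yields
\begin{align*}
\sup_{s\in[0,T]}\mathbb{E}\,\mathcal{W}_2^2\bigl(L_{x^i_s},\bar{\mu}^{x,N}_s\bigr)\;\le\; K\,r_{d,N},\qquad r_{d,N}=\begin{cases}N^{-1/2},&d<4,\\ N^{-1/2}\ln N,&d=4,\\ N^{-2/d},&d>4,\end{cases}
\end{align*}
with $K$ depending only on $T, L, p_0$ and the uniform moment (hence not on $d$ or $N$). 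Combining with Grönwall completes the bound.

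The main obstacle I anticipate is not the jump term per se—Assumption $A$--\ref{as:monotonicity} was formulated precisely so that the full generator difference collapses into a $|x-\bar x|^2+\mathcal{W}_2^2(\mu,\bar\mu)$ estimate, just as in the continuous case—but rather ensuring the moment hypothesis needed by Fournier–Guillin holds \emph{uniformly in} $N$ for the interacting system (so that the $p_0$-moments one needs to invoke when bounding the empirical rate do not explode). This uses Assumption $A$--\ref{as:coercivity:p0} transported to the $N$-particle system and the observation that $\mathcal{W}_2^p(\mu^{x,N}_s,\delta_0)=\frac{1}{N}\sum_j|x^{j,N}_s|^p$, so the $p_0$-moment estimate runs through identically to Theorem \ref{thm:eu}; this is the step I would be most careful to verify in detail.
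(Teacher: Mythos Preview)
Your proposal is correct and follows essentially the same route as the paper: well-posedness of \eqref{eq:interacting} as an $Nd$-dimensional ordinary SDE (the paper invokes \cite{gyongy1980} directly rather than Theorem~\ref{thm:eu}), It\^o's formula plus Assumption $A$--\ref{as:monotonicity}, the triangle inequality through the auxiliary empirical measure $\bar\mu^{x,N}_s$ of the non-interacting particles, the synchronous-coupling bound $\mathcal{W}_2^2(\bar\mu^{x,N}_s,\mu^{x,N}_s)\le \frac1N\sum_j|x^j_s-x^{j,N}_s|^2$, Gr\"onwall, and finally the Fournier--Guillin rate (the paper cites it via \cite[Theorem~5.8]{carmona2018a}). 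One small slip in your closing remark: $\mathcal{W}_2^{p_0}(\mu^{x,N}_s,\delta_0)=\bigl(\tfrac1N\sum_j|x^{j,N}_s|^2\bigr)^{p_0/2}$, not $\tfrac1N\sum_j|x^{j,N}_s|^{p_0}$, though Jensen's inequality gives the bound you need.
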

\begin{proof} 
%\textcolor{red}{
First, notice that the interacting particle system $\{x^{i,N}\}_{i\in \{1,\ldots, N\}}$ given in equation \eqref{eq:interacting}  can be regarded as an ordinary SDE driven by L\'evy noise with random coefficients  taking values in $\mathbb{R}^{d \times N}$. Thus, due to \cite[Theorem 1]{gyongy1980}, it has a unique c\`adl\`ag solution under Assumptions $A$--\ref{as:initial} to $A$--\ref{as:continuity} such that
\begin{align*}
\sup_{i\in\{1,\ldots,N\}}\sup_{t\in [0,T]}\mathbb{E}|x_t^{i, N}|^{2} \leq K
\end{align*}
for any $N\in\mathbb{N}$ where $K>0$ does not depend on $N$. 
By It\^o's formula, 
\begin{align*}
\mathbb{E}  |x_t^i & -x_t^{i,N}|^2   =  2 \mathbb{E} \int_0^t (x_s^i-x_s^{i,N})(b_s(x_s^{i}, L_{x_s^i}) -b_s(x_s^{i, N}, \mu_s^{x,N})) ds
\\
& +  2 \mathbb{E} \int_0^t  (x_s^i-x_s^{i,N}) (\sigma_s(x_s^{i}, L_{x_s^i})-\sigma_s(x_s^{i, N}, \mu_s^{x,N})) dw_s^i  +  \mathbb{E}    \int_0^t  |\sigma_s(x_s^{i}, L_{x_s^i})-\sigma_s(x_s^{i, N}, \mu_s^{x,N})|^2 ds 
\\
& + 2 \mathbb{E}  \int_0^t \int_Z (x_{s}^i-x_{s}^{i,N}) (\gamma_s(x_{s}^{i}, L_{x_{s}^i}, z)-\gamma_s(x_{s}^{i, N}, \mu_{s}^{x,N}, z)) \tilde{n}_p^i(ds,dz) 
\\
&+ \mathbb{E}  \int_0^t \int_Z \big\{|x_{s}^i-x_{s}^{i,N}+\gamma_s(x_{s}^{i}, L_{x_{s}^i}, z)-\gamma_s(x_{s}^{i, N}, \mu_{s}^{x,N}, z)|^{2} -|x_{s}^i-x_{s}^{i,N}|^{2}
\\
&\quad-2 (x_{s}^i-x_{s}^{i,N}) (\gamma_s(x_{s}^{i}, L_{x_{s}^i}, z)- \gamma_s(x_{s}^{i, N}, \mu_{s}^{x,N}, z))\big\} n_p^i(ds,dz) 
\\
& =  2 \mathbb{E} \int_0^t \big\{ (x_s^i-x_s^{i,N})(b_s(x_s^{i}, L_{x_s^i}) -b_s(x_s^{i, N}, \mu_s^{x,N})) +  \big|\sigma_s(x_s^{i}, L_{x_s^i})-\sigma_s(x_s^{i, N}, \mu_s^{x,N})\big|^2 
\\
&  \qquad + \int_Z |\gamma_s(x_{s}^{i}, L_{x_{s}^i}, z)- \gamma_s(x_{s}^{i, N}, \mu_{s}^{x,N}, z)|^2 \nu(dz) \big\} ds,
\end{align*}
which on the application of Assumption A--\ref{as:monotonicity} gives
\begin{align*}
\mathbb{E}  |x_t^i & -x_t^{i,N}|^2 \leq K \int_0^t  \big\{\mathbb{E}  |x_s^i  -x_s^{i,N}|^2 +\mathbb{E} \mathcal{W}_2^2 (L_{x_{s}^i}, \mu_s^{x,N}) \big\}ds
\\
& \leq K \int_0^t  \big\{ \mathbb{E}  |x_s^i  -x_s^{i,N}|^2 +  \mathbb{E} \mathcal{W}_2^2 (L_{x_{s}^i}, \mu_s^{x}) + \mathbb{E} \mathcal{W}_2^2 (\mu_s^{x}, \mu_s^{x,N})  \big\} ds,
\end{align*}
where $\mu_t^x$ is an empirical measure given by 
\begin{align*}
\mu_t^x(\cdot) =\frac{1}{N} \sum_{i=1}^N \delta_{x_t^i} (\cdot),
\end{align*}
and then Gr\"onwall's inequality yields
\begin{align*}
\sup_{i\in\{1,\ldots,N\}}\sup_{t\in[0,T]}\mathbb{E}  |x_t^i & -x_t^{i,N}|^2  \leq K  \int_0^T \sup_{i\in\{1,\ldots,N\}} \mathbb{E} \mathcal{W}_2^2 (L_{x_s^i},\mu_s^{x}) ds
\end{align*}
for any $t\in[0,T]$. By   \cite[Theorem 5.8]{carmona2018a},  one has
\begin{align*}
\mathbb{E}\mathcal{W}_2^2(L_{x_t^i}, \mu_t^x) \leq K
\begin{cases}
N^{-1/2}, & \mbox{ if } d<4, \\
N^{-1/2} \ln(N), & \mbox{ if } d=4, \\
N^{-1/2}, & \mbox{ if } d>4, 
\end{cases}
\end{align*}
for any $t\in[0,T]$,  $i\in\{1,\ldots,N\}$ and $N\in\mathbb{N}$.  This completes the proof. 
\end{proof}
%---------------------------------
\section{Tamed Euler Scheme}
%-----------------------------------
In this section, we make the following additional assumptions. Due to the requirement in Proposition~\ref{prop:propagation}, $p_0$ is now and for the rest of this article assumed to be strictly greater than $4$. 
%-----------------
\begin{asA} \label{as:monotonicity:rate}
There exist constants $L>0$ and $\eta>1$ such that
\begin{align*}
2(x-\bar{x})(b_t(x,\mu)& -b_t(\bar{x},\bar{\mu}))+\eta |\sigma_t(x,\mu)-\sigma_t(\bar{x}, \bar{\mu})|^2 
\\
& +\eta \int_Z |\gamma_t(x,\mu,z)-\gamma_t(\bar{x},\bar{\mu},z)|^2 \nu(dz) \leq L\big\{|x-\bar{x}|^2 +\mathcal{W}_2^2 (\mu, \bar{\mu})\big\}
\end{align*}
almost surely for any $t\in[0,T]$, $x,\bar{x} \in\mathbb{R}^d$ and $\mu, \bar{\mu} \in \mathcal{P}_2(\mathbb{R}^d)$. 
\end{asA}
%----------------
\begin{asA} \label{as:poly:Lips:b}
There exist  constants $L>0$ and $\chi>0$ such that
\begin{align*}
|b_t(x,\mu)-b_t(\bar{x},\bar{\mu})|^2 \leq L\big\{ (1+|x|+|\bar{x}|)^{\chi} |x-\bar{x}|^2+\mathcal{W}_2^2(\mu,\bar{\mu})\big\}
\end{align*}
almost surely for any $t\in[0,T]$, $x,\bar{x}\in\mathbb{R}^d$ and $\mu, \bar{\mu}\in\mathcal{P}_2(\mathbb{R}^d)$. 
\end{asA}
%---------
\begin{asA} \label{as:holder:time}
There exists a constant $L>0$ such that
\begin{align*}
|b_t(x,\mu)-b_s(x,\mu)|^2+|\sigma_t(x,\mu)-\sigma_s(x,\mu)|^2+ \int_Z |\gamma_t(x,\mu,z)-\gamma_s(x,\mu, z)|^2  \nu(dz) \leq L |t-s|  
\end{align*}
almost surely for any $t, s \in[0,T]$, $x\in\mathbb{R}^d$ and $\mu\in\mathcal{P}_2(\mathbb{R}^d)$. 
\end{asA}
%--------------------
\begin{asA} \label{as:bounded}
There exists a sequence $\{M_t\}_{t\in[0,T]}$ of $\mathscr{F}_0$-measurable non-negative random variables satisfying $\sup_{t\in[0,T]}\mathbb{E}M_t<\infty$ such that
\begin{align*}
|b_t(0,\delta_0)|^2 \vee  |\sigma_t(0,\delta_0)|^2  \vee  \int_Z |\gamma_t(0,\delta_0, z)|^2\nu(dz) < M_t 
\end{align*}
almost surely for any $t\in[0,T]$. 
\end{asA}
%---------------------------------------
\begin{rem}
%\textcolor{blue}{
One can notice that Assumption A--\ref{as:monotonicity} follows from Assumption A--\ref{as:monotonicity:rate}. Thus,  the McKean--Vlasov SDE  \eqref{eq:sde} has a unique solution under Assumptions A--\ref{as:initial} (with $p_0=2$),  A--\ref{as:coercivity},  A--\ref{as:continuity} and A--\ref{as:monotonicity:rate}.
%} 
\end{rem}
%---------------------------------------
\begin{rem} \label{rem:poly:lips:sig:gam}
Due to Assumptions $A$--\ref{as:monotonicity:rate} and A--\ref{as:poly:Lips:b}, there exist constants $L>0$ and $\chi>0$ such that
\begin{align*}
|\sigma_t(x,\mu)-\sigma_t(\bar{x}, \bar{\mu})|^2 +\int_Z |\gamma_t(x,\mu,z)-\gamma_t(\bar{x},\bar{\mu},z)|^2 \nu(dz) \leq L\big\{(1+|x|+|\bar{x}|)^{\chi/2}|x-\bar{x}|^2 +\mathcal{W}_2^2 (\mu, \bar{\mu})\big\}
\end{align*}
almost surely for any $t\in[0,T]$, $x,\bar{x}\in\mathbb{R}^d$ and $\mu, \bar{\mu}\in\mathcal{P}_2(\mathbb{R}^d)$. 
\end{rem} 
%---------------------------------
\begin{rem} \label{rem:pol:grow:rc}
Due to Assumption A--\ref{as:bounded} and Remark \ref{rem:poly:lips:sig:gam}, there exists a constant $L>0$ and a sequence $\{M_t\}_{t\in[0,T]}$ of $\mathscr{F}_0$-measurable non-negative  random variables satisfying $\sup_{t\in[0,T]}\mathbb{E}M_t<\infty$ such that
\begin{align*}
|b_t(x,\mu)| & \leq L \big\{M_t+ |x|^{\chi/2+1} + \mathcal{W}_2(\mu,\delta_0)\big\},
\\
|\sigma_t(x,\mu)| & \leq L \big\{M_t+|x|^{\chi/4+1} + \mathcal{W}_2(\mu,\delta_0)\big\},
\\
\int_Z |\gamma_t(x,\mu,z)|^2 \nu(dz) & \leq L \big\{M_t+ |x|^{\chi/2+2}+ \mathcal{W}_2^2(\mu,\delta_0)\big\}
\end{align*}
almost surely for any $t\in[0,T]$, $x\in\mathbb{R}^d$ and $\mu \in\mathcal{P}_2(\mathbb{R}^d)$. 
\end{rem}
We now proceed to introduce the tamed Euler scheme for the interacting particle system \eqref{eq:interacting} connected with the McKean--Vlasov SDE \eqref{eq:sde}. 
For every $n \in \mathbb{N}$, let $b^n_t$ and $\sigma^n_t$ be $\mathscr{P}\otimes \mathscr{B} (\mathbb{R}^d) \otimes \mathscr{B}(\mathcal{P}_2(\mathbb{R}^d))$ measurable functions taking values in $\mathbb{R}^d$ and $\mathbb{R}^{d\times m}$ respectively and $\gamma_t^n$ be $\mathscr{P}\otimes \mathscr{B} (\mathbb{R}^d) \otimes \mathscr{B}(\mathcal{P}_2(\mathbb{R}^d)) \otimes \mathscr{Z}$ measurable functions taking values in $\mathbb{R}^d$. 
Also, let us partition the time interval $[0,T]$ into $n\in\mathbb{N}$ sub-intervals of equal length  and define $\kappa_n(t) :=\lfloor nt \rfloor/n$ for any $t\in[0,T]$ and $n\in\mathbb{N}$. 
The tamed Euler scheme for the interacting particle system \eqref{eq:interacting}  associated with the McKean--Vlasov SDE \eqref{eq:sde} is given by 
\begin{align} \label{eq:scheme}
x_t^{i,N,n} & =x_0^{i} +\int_{0}^t b^n_{\kappa_n(s)}(x_{\kappa_n(s)}^{i, N, n}, \mu_{\kappa_n(s)}^{x,N, n}) ds + \int_{0}^t \sigma^n_{\kappa_n(s)}(x_{\kappa_n(s)}^{i, N, n}, \mu_{\kappa_n(s)}^{x,N, n}) dw_s^i \notag 
\\
& \qquad + \int_0^t \int_Z \gamma^n_{\kappa_n(s)}(x_{\kappa_n(s)}^{i, N, n}, \mu_{\kappa_n(s)}^{x,N, n}, z) \tilde{n}_p^i(ds,dz)
\end{align}
almost surely for any $i\in\{1, \ldots, N\}$, where $\mu_t^{x, N, n}$ is an empirical measure given by
\begin{align*}
\mu_t^{x, N, n} (\cdot)= \frac{1}{N} \sum_{i=1}^N \delta_{x_t^{i,N,n}} (\cdot)
\end{align*}
for any $t\in[0,T]$  and $n,N\in\mathbb{N}$.  
%-------------------------------------%----------------
\begin{asB} \label{asb:coercivity:scheme}
For every $n\in\mathbb{N}$, there exist a sequence $\{\bar{M}^n_t\}_{t\in[0,T]}$ of $\mathscr{F}_0$-measurable non-negative random variables  satisfying $\sup_{n\in\mathbb{N}}\sup_{t\in[0,T]}\mathbb{E}\bar{M}^n_t<\infty$ and a constant $L>0$ such that
\begin{align*}
2|&x|^{p_0-2}  xb_t^n(x, \mu) +(p_0-1) |x|^{p_0-2} |\sigma_t^n(x,\mu)|^2 
\\
& + 2(p_0-1) \int_Z |\gamma_t^n(x,\mu,z)|^2 \int_0^1 (1-\theta) |x+\theta\gamma_t^n(x,\mu,z)|^{p_0-2} d\theta  \nu(dz) \leq L\big\{\bar{M}^n_t+|x|^{p_0}+\mathcal{W}_2^{p_0}(\mu,\delta_0) \big\}
\end{align*}
almost surely for any $x\in\mathbb{R}^d$, $t\in[0,T]$ and $\mu \in \mathcal{P}_2(\mathbb{R}^d)$. 
\end{asB}
%-------------------------------------%----------------
\begin{asB} \label{asb:growth:n}
For every $n\in\mathbb{N}$, there exist a sequence $\{\bar{M}^n_t\}_{t\in[0,T]}$ of $\mathscr{F}_0$-measurable non-negative random variables satisfying $\sup_{n\in\mathbb{N}}\sup_{t\in[0,T]}\mathbb{E} (\bar{M}^n_t)^{p_0}<\infty$ and a constant $L>0$ such that
\begin{align*}
|b_t^n(x,\mu)| &\leq  L \min\big\{ n^{1/4}  \big( \bar{M}^n_t +|x|+\mathcal{W}_2(\mu, \delta_0)\big), M_t+ |x|^{\chi/2+1} + \mathcal{W}_2(\mu,\delta_0)\big\},
\\
|\sigma_t^n(x,\mu)|& \leq L \min\big\{ n^{1/6} \big\{\bar{M}^n_t +|x|+\mathcal{W}_2(\mu, \delta_0)\big\}, M_t+ |x|^{\chi/4+1} + \mathcal{W}_2(\mu,\delta_0) \big\},
\\
\int_Z |\gamma_t^n(x,\mu, z)|^2  \nu(dz)  &  \leq   L \min\big\{ n^{1/3}  (\bar{M}^n_t  +|x|+\mathcal{W}_2(\mu, \delta_0))^2, M_t+ |x|^{\chi/2+2}+ \mathcal{W}_2^2(\mu,\delta_0) \big\},
\\
\int_Z |\gamma_t^n(x,\mu, z)|^{q}  \nu(dz)  & \leq   L \min\big\{n^{1/3}  \big(\bar{M}^n_t +|x|+\mathcal{W}_2(\mu, \delta_0)\big)^{q}, M_t+ |x|^{\chi/2+q}+ \mathcal{W}_2^q(\mu,\delta_0) \big\},  \,\, 2 \leq q\leq p_0
\end{align*}
almost surely for any $t\in[0,T]$ and $\mu\in\mathcal{P}_2(\mathbb{R}^d)$, where the process $M$ is given in Remark \ref{rem:pol:grow:rc}.
\end{asB} 
\begin{asB} \label{asb:diff:rate}
For every $\epsilon>0$, there exists a constant $K>0$ such that 
\begin{align*}
 \mathbb{E} \int_0^T |b_{\kappa_n(s)}(x_{\kappa_n(s)}^{i, N,n}, \mu_{\kappa_n(s)}^{x,N,n})- b^n_{\kappa_n(s)}(x_{\kappa_n(s)}^{i, N, n}, \mu_{\kappa_n(s)}^{x,N, n})|^2 ds & \leq K n^{-2/(2+\epsilon)},
 \\
  \mathbb{E} \int_0^T |\sigma_{\kappa_n(s)}(x_{\kappa_n(s)}^{i, N,n}, \mu_{\kappa_n(s)}^{x,N,n})- \sigma^n_{\kappa_n(s)}(x_{\kappa_n(s)}^{i, N, n}, \mu_{\kappa_n(s)}^{x,N, n})|^2 ds& \leq K n^{-2/(2+\epsilon)}, 
  \\
    \mathbb{E} \int_0^T\int_Z |\gamma_{\kappa_n(s)}(x_{\kappa_n(s)}^{i, N,n}, \mu_{\kappa_n(s)}^{x,N,n}, z)- \gamma^n_{\kappa_n(s)}(x_{\kappa_n(s)}^{i, N, n}, \mu_{\kappa_n(s)}^{x,N, n}, z)|^2 \nu(dz) ds & \leq K n^{-2/(2+\epsilon)}
\end{align*}
for any $t\in[0,T]$, $\mu\in\mathcal{P}_2(\mathbb{R}^d)$ and $n, N\in\mathbb{N}$, where $K$ does not depend on $n$ and $N$. 
\end{asB}
%-------------------------------------
%\textcolor{red}{
\begin{rem}
Notice that Assumption $B$--\ref{asb:diff:rate}  contains the numerical solution and, on the face of it, seems difficult to verify \textit{a priori}. 
There are two key, competing factors in identifying a suitable explicit form of taming. %, one keeps two important things in mind. 
First, the tamed coefficients need to satisfy  Assumption $B$--\ref{asb:growth:n}  and, second, the rate of strong convergence of the numerical scheme (\textit{e.g.} $1/2$ in our case) should be used appropriately (\textit{e.g.} in the denominator of the tamed coefficients). 
Then, Assumption $B$--\ref{asb:diff:rate} holds by construction. We refer to Section 5, where we verify all assumptions for concrete schemes in applications, for details. % discussion on this.
\end{rem}
%}
%-------------------------------------
\begin{lem} \label{lem:one-step:mb}
Let Assumption B--\ref{asb:growth:n} hold. Then, 
\begin{align*}
\mathbb{E}\big(|x_t^{i, N, n}- x_{\kappa_n(t)}^{i, N, n}|^{q} \big| \mathscr{F}_{\kappa_n(t)}\big) \leq  K
\begin{cases}
 n^{-2/3} \big\{\bar{M}^n_t+|x_{\kappa_n(t)}^{i, N, n}|+ \mathcal{W}_2(\mu_{\kappa_n(t)}^{x,N, n}, \delta_0)\big\}^{q}, & \mbox{ if } q= p_0-2,  p_0-1 
\\
 n^{-1/3} \big\{\bar{M}^n_t+|x_{\kappa_n(t)}^{i, N, n}|+ \mathcal{W}_2(\mu_{\kappa_n(t)}^{x,N, n}, \delta_0)\big\}, & \mbox{ if }  q=1
\end{cases}
\end{align*}
almost surely for any $i\in\{1,\ldots,N\}$, $t\in[0,T]$ and $n,N\in\mathbb{N}$, where  constant $K>0$ does not depend on $n$ and $N$.  
\end{lem}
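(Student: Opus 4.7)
The plan is to write the one-step increment $\Delta_t := x_t^{i,N,n}-x_{\kappa_n(t)}^{i,N,n}$ directly from the definition \eqref{eq:scheme}. Because $\kappa_n(s)=\kappa_n(t)$ for all $s\in[\kappa_n(t),t)$, the coefficients $b^n_{\kappa_n(t)}$, $\sigma^n_{\kappa_n(t)}$, $\gamma^n_{\kappa_n(t)}$ are evaluated at the $\mathscr{F}_{\kappa_n(t)}$-measurable pair $(x_{\kappa_n(t)}^{i,N,n},\mu_{\kappa_n(t)}^{x,N,n})$ and can therefore be pulled out of the conditional expectation. Splitting $\Delta_t$ into its drift, Brownian and compensated-Poisson parts and using the elementary inequality $|a+b+c|^q\le 3^{q-1}(|a|^q+|b|^q+|c|^q)$, it suffices to bound each of the three pieces in conditional $L^q$-norm. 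Write $\Lambda_t:=\bar M^n_t+|x_{\kappa_n(t)}^{i,N,n}|+\mathcal{W}_2(\mu_{\kappa_n(t)}^{x,N,n},\delta_0)$.

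For the drift, $|b^n_{\kappa_n(t)}|\cdot(t-\kappa_n(t))\le Ln^{1/4}\Lambda_t\cdot n^{-1}=Ln^{-3/4}\Lambda_t$ by the first bound of Assumption $B$--\ref{asb:growth:n}, which is stronger than the required rate. For the Brownian piece, I apply the conditional Burkholder--Davis--Gundy inequality (for $q\ge 2$), so that
\begin{align*}
\mathbb{E}\bigl(\bigl|\textstyle\int_{\kappa_n(t)}^t \sigma^n_{\kappa_n(t)}dw^i_s\bigr|^q\bigm|\mathscr{F}_{\kappa_n(t)}\bigr)
\le C_q|\sigma^n_{\kappa_n(t)}|^q(t-\kappa_n(t))^{q/2}
\le C_q\bigl(Ln^{1/6}\Lambda_t\bigr)^q n^{-q/2}=K n^{-q/3}\Lambda_t^q,
\end{align*}
which is at most $K n^{-2/3}\Lambda_t^q$ whenever $q\ge 2$. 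The core of the argument is the jump term: I use the conditional Kunita inequality, which for a compensated Poisson stochastic integral with predictable integrand $f$ and $q\ge 2$ gives
\begin{align*}
\mathbb{E}\Bigl(\bigl|\textstyle\int_{\kappa_n(t)}^t\!\!\int_Z f\,\tilde n_p^i(ds,dz)\bigr|^q\Bigm|\mathscr{F}_{\kappa_n(t)}\Bigr)
\le C_q\!\left(\textstyle\int_Z|f|^2\nu(dz)\right)^{\!q/2}\!(t-\kappa_n(t))^{q/2}
+C_q\textstyle\int_Z|f|^q\nu(dz)\,(t-\kappa_n(t)).
\end{align*}
Applying the second and fourth bounds of Assumption $B$--\ref{asb:growth:n} to $f=\gamma^n_{\kappa_n(t)}(\cdot,z)$, the first summand is controlled by $C_q(Ln^{1/3}\Lambda_t^2)^{q/2}n^{-q/2}=K n^{-q/3}\Lambda_t^q$, while the second is $\le C_q Ln^{1/3}\Lambda_t^q\cdot n^{-1}=Kn^{-2/3}\Lambda_t^q$. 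For $q\in\{p_0-2,p_0-1\}$ we have $q\ge 2$ since $p_0>4$, so $n^{-q/3}\le n^{-2/3}$ and the second summand dominates, giving the claimed $n^{-2/3}\Lambda_t^q$.

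For $q=1$, I would not invoke BDG/Kunita directly; instead, conditional Jensen gives $\mathbb{E}(|\cdot|\mid\mathscr{F}_{\kappa_n(t)})\le(\mathbb{E}(|\cdot|^2\mid\mathscr{F}_{\kappa_n(t)}))^{1/2}$, and then the $q=2$ computations above yield $Kn^{-1/3}\Lambda_t$ for both the Brownian and jump parts (and $Ln^{-3/4}\Lambda_t$, absorbed, for the drift). Collecting terms gives the stated estimate. The only delicate point is to pick, inside Assumption $B$--\ref{asb:growth:n}, the $n^{1/3}$-bounds for the $L^2$- and $L^q$-norms of $\gamma^n$ rather than the superlinear polynomial alternatives: this is what balances against $(t-\kappa_n(t))\le n^{-1}$ to produce the $n^{-2/3}$ rate. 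No moment bound on the scheme itself is needed, since the right-hand side of the inequality is already expressed in terms of $\Lambda_t$ only.
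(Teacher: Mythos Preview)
Your proof is correct and follows essentially the same route as the paper: split the increment into drift, Brownian and compensated-Poisson parts, apply H\"older/BDG/Kunita-type martingale inequalities conditionally on $\mathscr{F}_{\kappa_n(t)}$, and then invoke the $n^{1/4}$, $n^{1/6}$, $n^{1/3}$ bounds from Assumption $B$--\ref{asb:growth:n} to balance against the factors $(t-\kappa_n(t))^{q}$, $(t-\kappa_n(t))^{q/2}$ and $(t-\kappa_n(t))$. The only cosmetic difference is your use of Jensen for $q=1$ in place of the paper's direct computation (where the $\int_Z|\gamma^n|^q\nu(dz)$ term simply does not arise), which yields the same $n^{-1/3}$ rate.
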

\begin{proof} 
As mentioned above, we have $p_0>4$. From equation \eqref{eq:scheme}, 
\begin{align*}
\mathbb{E} \big(|x_t^{i,N,n}  & - x_{\kappa_n(t)}^{i,N,n}|^{q}|\mathscr{F}_{\kappa_n(t)}\big)   \leq K \mathbb{E}\Big(\Big|\int_{\kappa_n(t)}^t b^n_{\kappa_n(s)}(x_{\kappa_n(s)}^{i, N, n}, \mu_{\kappa_n(s)}^{x,N, n}) ds \Big|^{q}\Big| \mathscr{F}_{\kappa_n(t)}\Big)
\\
& +K  \mathbb{E}\Big(\Big| \int_{\kappa_n(t)}^t \sigma^n_{\kappa_n(s)}(x_{\kappa_n(s)}^{i, N, n}, \mu_{\kappa_n(s)}^{x,N, n}) dw_s^i \Big|^{q} \Big| \mathscr{F}_{\kappa_n(t)}\Big)\notag 
\\
& +K \mathbb{E}\Big(\Big|\int_{\kappa_n(t)}^t \int_Z \gamma^n_{\kappa_n(s)}(x_{\kappa_n(s)}^{i, N, n}, \mu_{\kappa_n(s)}^{x,N, n}, z) \tilde{n}_p^i(ds,dz)\Big|^{q}\Big|\mathscr{F}_{\kappa_n(t)}\Big),
\end{align*}
which on the application of H\"older's and martingale inequalities  yields
\begin{align*}
\mathbb{E} \big(|x_t^{i,N,n}  - & x_{\kappa_n(t)}^{i,N,n}|^{q} |  \mathscr{F}_{\kappa_n(t)}\big)   \leq K n^{-q+1}\mathbb{E}\Big(\int_{\kappa_n(t)}^t | b^n_{\kappa_n(s)}(x_{\kappa_n(s)}^{i, N, n}, \mu_{\kappa_n(s)}^{x,N, n})|^{q} ds\Big| \mathscr{F}_{\kappa_n(t)}\Big)
\\
& +K n^{-q/2+1} \mathbb{E} \Big(\int_{\kappa_n(t)}^t |\sigma^n_{\kappa_n(s)}(x_{\kappa_n(s)}^{i, N, n}, \mu_{\kappa_n(s)}^{x,N, n})|^{q} ds\Big|\mathscr{F}_{\kappa_n(t)}\Big) \notag 
\\
&+ Kn^{-q/2+1}  \mathbb{E} \Big(\int_{\kappa_n(t)}^t \Big( \int_Z |\gamma^n_{\kappa_n(s)}(x_{\kappa_n(s)}^{i, N, n}, \mu_{\kappa_n(s)}^{x,N, n}, z)|^2  \nu(dz)\Big)^{q/2} ds\Big|\mathscr{F}_{\kappa_n(t)}\Big)
\\
&+ K \mathbb{E}\Big(\int_{\kappa_n(t)}^t  \int_Z |\gamma^n_{\kappa_n(s)}(x_{\kappa_n(s)}^{i, N, n}, \mu_{\kappa_n(s)}^{x,N, n}, z)|^{q}  \nu(dz)  ds \Big|\mathscr{F}_{\kappa_n(t)}\Big)
\end{align*}
almost surely for any $t\in[0,T]$, $i\in\{1,\ldots, N\}$ and $n,N\in\mathbb{N}$. When $ q =1$, then the last term on the right-hand side of the above expression does not appear. 
Thus, by Assumption $B$--\ref{asb:growth:n} one obtains
\begin{align*}
\mathbb{E} &\big(|x_t^{i,N,n}  -  x_{\kappa_n(t)}^{i,N,n}|^{q} |  \mathscr{F}_{\kappa_n(t)}\big)  
\\
& \leq K 
\begin{cases}
   (n^{-3q/4}+n^{-q/3}+n^{-2/3})\big\{M^n_t+|x_{\kappa_n(s)}^{i, N, n}|+ \mathcal{W}_2(\mu_{\kappa_n(s)}^{x,N, n}, \delta_0)\big\}^{q}, & \mbox{ if }   q = p_0-2, p_0-1, 
   \\
   (n^{-3/4}+n^{-1/3})\big\{M^n_t+|x_{\kappa_n(s)}^{i, N, n}|+ \mathcal{W}_2(\mu_{\kappa_n(s)}^{x,N, n}, \delta_0)\big\}, & \mbox{ if }  q =1
\end{cases}
\end{align*}
almost surely for any $t\in[0,T]$, $i\in\{1,\ldots, N\}$ and $n,N\in\mathbb{N}$. This completes the proof. 
\end{proof}
%---------------------------------------
The following lemma gives the moment bound of the scheme \eqref{eq:scheme}. 
\begin{lem} \label{lem:scheme:mb}
Let Assumptions $A$--\ref{as:initial}, $B$--\ref{asb:coercivity:scheme}  and B--\ref{asb:growth:n} hold.  Then, 
\begin{align*}
\sup_{i\in\{1,\ldots, N\}}\sup_{t\in[0,T]} \mathbb{E}   |x_t^{i, N, n}|^{p_0} \leq K,
\end{align*}
for any $n, N\in\mathbb{N}$, where $K>0$ does not depend on $n$ and $N$. 
\end{lem}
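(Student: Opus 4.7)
The strategy is to apply It\^o's formula to $|x_t^{i,N,n}|^{p_0}$, use Assumption $B$--\ref{asb:coercivity:scheme} to absorb the leading coercive contribution, control the discretisation errors incurred by evaluating the tamed coefficients at $\kappa_n(s)$ rather than at $s$ by means of Lemma \ref{lem:one-step:mb}, and close by Gr\"onwall's inequality. First I would localise via $\tau_R := \inf\{t\geq 0 : |x_t^{i,N,n}|\geq R\}$ so that the stochastic integrals arising after It\^o are genuine martingales with vanishing expectation. Applying It\^o's formula to $|\cdot|^{p_0}$ together with the remainder identity \eqref{eq:remainder} on the jump sum (exactly as in the proof of Theorem \ref{thm:eu}) and taking expectations yields
\begin{align*}
\mathbb{E}|x_{t\wedge\tau_R}^{i,N,n}|^{p_0} \leq \mathbb{E}|x_0^i|^{p_0} + \frac{p_0}{2}\mathbb{E}\int_0^{t\wedge\tau_R} &\Big\{2|x_s^{i,N,n}|^{p_0-2} x_s^{i,N,n}\,b^n_{\kappa_n(s)} + (p_0-1)|x_s^{i,N,n}|^{p_0-2}|\sigma^n_{\kappa_n(s)}|^2 \\
& + 2(p_0-1)\int_Z |\gamma^n_{\kappa_n(s)}|^2\int_0^1(1-\theta)|x_s^{i,N,n}+\theta\gamma^n_{\kappa_n(s)}|^{p_0-2}d\theta\,\nu(dz)\Big\}ds,
\end{align*}
where the tamed coefficients are evaluated at $(x^{i,N,n}_{\kappa_n(s)}, \mu^{x,N,n}_{\kappa_n(s)})$.

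Next I would replace each outer $x_s^{i,N,n}$-factor by its $\kappa_n(s)$-analogue. The resulting principal terms match the left-hand side of Assumption $B$--\ref{asb:coercivity:scheme} and therefore contribute at most $K\mathbb{E}\int_0^t\bigl(\bar{M}^n_{\kappa_n(s)} + |x^{i,N,n}_{\kappa_n(s)}|^{p_0} + \mathcal{W}_2^{p_0}(\mu^{x,N,n}_{\kappa_n(s)}, \delta_0)\bigr)ds$; since $p_0/2\geq 1$, Jensen's inequality applied to the empirical measure gives $\mathbb{E}\mathcal{W}_2^{p_0}(\mu^{x,N,n}_{\kappa_n(s)},\delta_0) \leq \sup_j\mathbb{E}|x^{j,N,n}_{\kappa_n(s)}|^{p_0}$, and $\sup_n\sup_t\mathbb{E}\bar{M}^n_t <\infty$ by assumption.

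The three substitution errors are handled using the elementary inequalities $\bigl||x|^{p_0-2}x-|y|^{p_0-2}y\bigr|\leq C\bigl(|y|^{p_0-2}|x-y|+|x-y|^{p_0-1}\bigr)$ and $\bigl||x|^{p_0-2}-|y|^{p_0-2}\bigr|\leq C\bigl(|y|^{p_0-3}|x-y|+|x-y|^{p_0-2}\bigr)$, which are valid since $p_0>4$, together with the conditional one-step moment bounds of Lemma \ref{lem:one-step:mb} at $q=1,\,p_0-2,\,p_0-1$ and the growth estimates in Assumption $B$--\ref{asb:growth:n}. For the jump correction I would further use $|x_{\kappa_n(s)}+\theta\gamma|^{p_0-3}\leq C(|x_{\kappa_n(s)}|^{p_0-3}+|\gamma|^{p_0-3})$ and the $q=p_0-1$ part of Assumption $B$--\ref{asb:growth:n} to handle the $|\gamma|^{p_0-1}\nu(dz)$-integral that appears. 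The blow-up factors $n^{1/4},\,n^{1/6},\,n^{1/3}$ in the growth bounds pair with the decay factors $n^{-1/3},\,n^{-2/3}$ from Lemma \ref{lem:one-step:mb} to produce net exponents no larger than $1$; a final Young's inequality collapses every error into a quantity of the form $K\mathbb{E}\bigl((\bar{M}^n_{\kappa_n(s)})^{p_0}+|x^{i,N,n}_{\kappa_n(s)}|^{p_0}+\mathcal{W}_2^{p_0}(\mu^{x,N,n}_{\kappa_n(s)},\delta_0)\bigr)$, uniformly in $n,N,R$ by virtue of $\sup_n\sup_t\mathbb{E}(\bar{M}^n_t)^{p_0}<\infty$.

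Assembling the estimates and taking suprema over $i\in\{1,\dots,N\}$ delivers
\[
\sup_i \mathbb{E}|x_{t\wedge\tau_R}^{i,N,n}|^{p_0} \leq K + K\int_0^t \sup_i\sup_{r\leq s}\mathbb{E}|x_{r\wedge\tau_R}^{i,N,n}|^{p_0}\,ds,
\]
after which Gr\"onwall's inequality followed by Fatou's lemma as $R\to\infty$ completes the proof. The main obstacle is the third step: the blow-up factors of the tamed coefficients must be \emph{exactly} offset by the one-step error rates, and this delicate accounting is precisely the interplay between Assumption $B$--\ref{asb:growth:n}, Assumption $B$--\ref{asb:diff:rate} and Lemma \ref{lem:one-step:mb} highlighted in the introduction, and it is what dictates the specific exponents $1/4,\,1/6,\,1/3$ appearing in the taming hypothesis.
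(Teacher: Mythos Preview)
Your proposal is correct and follows essentially the same approach as the paper: It\^o's formula, the remainder identity \eqref{eq:remainder} for the jump term, the replacement of $x_s^{i,N,n}$ by $x_{\kappa_n(s)}^{i,N,n}$ in the outer factors (the paper writes this out via exact Taylor-type remainder formulas rather than the Lipschitz-style inequalities you quote, but these are equivalent), invocation of Assumption $B$--\ref{asb:coercivity:scheme} on the principal term, Lemma \ref{lem:one-step:mb} at $q=1,\,p_0-2,\,p_0-1$ together with Assumption $B$--\ref{asb:growth:n} for the three error terms, Young's inequality, the Jensen bound on $\mathcal{W}_2^{p_0}(\mu^{x,N,n}_{\kappa_n(s)},\delta_0)$, and Gr\"onwall. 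The only cosmetic differences are that you explicitly localise via $\tau_R$ and pass to the limit with Fatou (a point of rigour the paper leaves implicit), and a small slip in wording: the ``net exponents'' of $n$ should be no larger than $0$, not $1$, so that the resulting factors are bounded.
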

%--------------
\begin{proof}
On the application of It\^o's formula, 
\begin{align*}
\mathbb{E}|x^{i,N,n}_t|^{p_0}  =  & \, \mathbb{E}|x^{i}_0|^{p_0} + p_0 \mathbb{E} \int_0^t |x^{i,N,n}_s|^{p_0-2}  x^{i,N,n}_s b_{\kappa_n(s)}^n (x^{i,N,n}_{\kappa_n(s)}, \mu_{\kappa_n(s)}^{x,N,n}) ds 
\\
& + p_0 \mathbb{E} \int_0^t |x^{i,N,n}_s|^{p_0-2}  x^{i,N,n}_s \sigma_{\kappa_n(s)}^n (x^{i,N,n}_{\kappa_n(s)}, \mu_{\kappa_n(s)}^{x,N,n}) dw_s^i 
\\
& + \frac{p_0(p_0-2)}{2} \mathbb{E} \int_0^t |x^{i,N,n}_s|^{p_0-4} |\sigma_{\kappa_n(s)}^n (x^{i,N,n}_{\kappa_n(s)}, \mu_{\kappa_n(s)}^{x,N,n})^* x^{i,N,n}_{\kappa_n(s)}|^2 ds 
\\
& +\frac{p_0}{2} \mathbb{E} \int_0^t |x^{i,N,n}_s|^{p_0-2} |\sigma_{\kappa_n(s)}^n (x^{i,N,n}_{\kappa_n(s)}, \mu_{\kappa_n(s)}^{x,N,n})|^2 ds 
\\
& + p_0 \mathbb{E} \int_0^t \int_Z |x^{i,N,n}_s|^{p_0-2}  x^{i,N,n}_s \gamma_{\kappa_n(s)}^n (x^{i,N,n}_{\kappa_n(s)}, \mu_{\kappa_n(s)}^{x,N,n}, z) \tilde{n}_p(ds, dz) 
\\
&+  \mathbb{E} \int_0^t \int_Z \big\{|x^{i,N,n}_s+\gamma_{\kappa_n(s)}^n (x^{i,N,n}_{\kappa_n(s)}, \mu_{\kappa_n(s)}^{x,N,n}, z)|^{p_0}- |x^{i,N,n}_{s}|^{p_0}
\\
& \qquad \qquad - p_0 |x^{i,N,n}_s|^{p_0-2} x^{i,N,n}_{s} \gamma_{\kappa_n(s)}^n (x^{i,N,n}_{\kappa_n(s)}, \mu_{\kappa_n(s)}^{x,N,n}, z)\big\} n_p(ds,dz),
\end{align*}
which further implies
\begin{align*}
\mathbb{E}|x^{i,N,n}_t|^{p_0}   = & \, \mathbb{E}|x^{i}_0|^{p_0} + p_0 \mathbb{E} \int_0^t |x^{i,N,n}_s|^{p_0-2}  x^{i,N,n}_s b_{\kappa_n(s)}^n (x^{i,N,n}_{\kappa_n(s)}, \mu_{\kappa_n(s)}^{x,N,n}) ds
\\
&  + \frac{p_0(p_0-1)}{2} \mathbb{E} \int_0^t |x^{i,N,n}_s|^{p_0-2} |\sigma_{\kappa_n(s)}^n (x^{i,N,n}_{\kappa_n(s)}, \mu_{\kappa_n(s)}^{x,N,n})|^2 ds 
\\
& +  \mathbb{E} \int_0^t \int_Z \big\{|x^{i,N,n}_s+\gamma_{\kappa_n(s)}^n (x^{i,N,n}_{\kappa_n(s)}, \mu_{\kappa_n(s)}^{x,N,n}, z)|^{p_0}- |x^{i,N,n}_{s}|^{p_0}
\\
& \qquad \qquad - p_0 |x^{i,N,n}_s|^{p_0-2} x^{i,N,n}_{s} \gamma_{\kappa_n(s)}^n (x^{i,N,n}_{\kappa_n(s)}, \mu_{\kappa_n(s)}^{x,N,n}, z)\big\}  \nu(dz) ds
\end{align*}
for any $i\in\{1,\ldots, N\}$, $t\in[0,T]$ and $n,N\in\mathbb{N}$. The last term on the right hand side of the above equation can be estimated by using the remainder formula given in equation \eqref{eq:remainder} and thus one obtains, 
\begin{align*}
\mathbb{E}|  & x^{i,N,n}_t |^{p_0}  \leq  \mathbb{E}|x^{i}_0|^{p_0} + p_0 \mathbb{E} \int_0^t \Big\{|x^{i,N,n}_s|^{p_0-2}  x^{i,N,n}_s-|x^{i,N,n}_{\kappa_n(s)}|^{p_0-2}  x^{i,N,n}_{\kappa_n(s)} \Big\} b_{\kappa_n(s)}^n (x^{i,N,n}_{\kappa_n(s)}, \mu_{\kappa_n(s)}^{x,N,n}) ds \notag
\\
&  + \frac{p_0(p_0-1)}{2} \mathbb{E} \int_0^t \Big\{|x^{i,N,n}_s|^{p_0-2}-|x^{i,N,n}_{\kappa_n(s)}|^{p_0-2}\Big\} |\sigma_{\kappa_n(s)}^n (x^{i,N,n}_{\kappa_n(s)}, \mu_{\kappa_n(s)}^{x,N,n})|^2 ds  \notag
\\
& +p_0 (p_0-1) \mathbb{E} \int_0^t\int_Z  |\gamma_{\kappa_n(s)}^n (x^{i,N,n}_{\kappa_n(s)}, \mu_{\kappa_n(s)}^{x,N,n}, z)|^2 \int_0^1 (1-\theta) \Big\{ |x^{i,N,n}_s+\theta \gamma_{\kappa_n(s)}^n (x^{i,N,n}_{\kappa_n(s)}, \mu_{\kappa_n(s)}^{x,N,n}, z)| ^{p_0-2} \notag
\\
& \qquad \qquad \qquad \qquad  - |x^{i,N,n}_{\kappa_n(s)}+\theta \gamma_{\kappa_n(s)}^n (x^{i,N,n}_{\kappa_n(s)}, \mu_{\kappa_n(s)}^{x,N,n}, z) \big|^{p_0-2} \Big\} d\theta  \nu(dz) ds \notag
\\
& + \frac{p_0}{2} \mathbb{E} \int_0^t \Big\{ 2|x^{i,N,n}_{\kappa_n(s)}|^{p_0-2}  x^{i,N,n}_{\kappa_n(s)}  b_{\kappa_n(s)}^n (x^{i,N,n}_{\kappa_n(s)}, \mu_{\kappa_n(s)}^{x,N,n})  + (p_0-1)  |x^{i,N,n}_{\kappa_n(s)}|^{p_0-2} |\sigma_{\kappa_n(s)}^n (x^{i,N,n}_{\kappa_n(s)}, \mu_{\kappa_n(s)}^{x,N,n})|^2  \notag
\\
& +2 (p_0-1) \int_Z  |\gamma_{\kappa_n(s)}^n (x^{i,N,n}_{\kappa_n(s)}, \mu_{\kappa_n(s)}^{x,N,n}, z)|^2  \int_0^1 (1-\theta)|x^{i,N,n}_{\kappa_n(s)}+\theta \gamma_{\kappa_n(s)}^n (x^{i,N,n}_{\kappa_n(s)}, \mu_{\kappa_n(s)}^{x,N,n}, z)|^{p_0-2} d\theta  \nu(dz) \Big\}ds, \notag
\end{align*}
which on the application of  Assumption $B$--\ref{asb:coercivity:scheme} yields, 
\begin{align}
\mathbb{E}|  x^{i,N,n}_t &|^{p_0}  \leq    \mathbb{E}|x^{i}_0|^{p_0} + T_1+T_2+T_3+ \mathbb{E} \int_0^t \Big\{\bar{M}^n_s+ |x^{i,N,n}_{\kappa_n(s)}|^{p_0} +\mathcal{W}_2^{p_0} (\mu_{\kappa_n(s)}^{x,N,n}, \delta_0)\Big\}ds  \label{eq:T1+T2+T3}
\end{align}
for any $i\in\{1,\ldots, N\}$, $t\in[0,T]$ and $n,N\in\mathbb{N}$. For $T_1$, one uses the following remainder formula, 
\begin{align*}
|y|^{p_0-2} y &=|a|^{p_0-2} a + (p_0-2) \int_0^1 |a+\theta(y-a)|^{p_0-4} (y-a) (a+\theta(y-a) ) (a+\theta(y-a) ) d\theta 
\\
&+ \int_0^1 |a+\theta(y-a) |^{p_0-2} \sum_{j=1}^d (y_j-a_j) \mathbf{e}_j d\theta,
\end{align*}
where $\mathbf{e}_j$ is a vector whose $j$-th element is $1$ and all other elements are $0$ 
and $(y_j-a_j)$ is the $j$-th element of $(y-a)$ for every $j\in\{1,\ldots,d\}$.   Hence, one obtains
\begin{align*}
T_1 & :=p_0 \mathbb{E} \int_0^t \Big\{|x^{i,N,n}_s|^{p_0-2}  x^{i,N,n}_s - |x^{i,N,n}_{\kappa_n(s)}|^{p_0-2}  x^{i,N,n}_{\kappa_n(s)} \Big\} b_{\kappa_n(s)}^n (x^{i,N,n}_{\kappa_n(s)}, \mu_{\kappa_n(s)}^{x,N,n})  ds 
\\
& = (p_0-2) \mathbb{E} \int_0^t  \int_0^1 |x^{i,N,n}_{\kappa_n(s)} + \theta \big(x^{i,N,n}_s-x^{i,N,n}_{\kappa_n(s)}\big)|^{p_0-4}
\\
& \times  (x^{i,N,n}_s-x^{i,N,n}_{\kappa_n(s)}) (x^{i,N,n}_{\kappa_n(s)}  + \theta (x^{i,N,n}_s-x^{i,N,n}_{\kappa_n(s)})) (x^{i,N,n}_{\kappa_n(s)}  + \theta (x^{i,N,n}_s-x^{i,N,n}_{\kappa_n(s)})) b_{\kappa_n(s)}^n (x^{i,N,n}_{\kappa_n(s)}, \mu_{\kappa_n(s)}^{x,N,n})  d\theta ds 
\\
& + \mathbb{E} \int_0^t \int_0^1 |x^{i,N,n}_{\kappa_n(s)}  + \theta (x^{i,N,n}_s-x^{i,N,n}_{\kappa_n(s)})|^{p_0-2} (x^{i,N,n}_s-x^{i,N,n}_{\kappa_n(s)}) b_{\kappa_n(s)}^n (x^{i,N,n}_{\kappa_n(s)}, \mu_{\kappa_n(s)}^{x,N,n}) d\theta ds,
\end{align*}
which on the application of Cauchy-Schwarz inequality and Assumption $B$--\ref{asb:growth:n}  yields
\begin{align*}
T_1 & \leq K \mathbb{E} \int_0^t  \int_0^1 |x^{i,N,n}_{\kappa_n(s)}  + \theta \big(x^{i,N,n}_s-x^{i,N,n}_{\kappa_n(s)}\big)|^{p_0-2} |x^{i,N,n}_s-x^{i,N,n}_{\kappa_n(s)}| 
\\
& \qquad \qquad \times n^{1/4}\Big\{\bar{M}^n_s+ |x^{i,N,n}_{\kappa_n(s)}|+ \mathcal{W}_2 (\mu_{\kappa_n(s)}^{x,N,n}, \delta_0) \Big\} d\theta ds 
\\
& \leq K \mathbb{E} \int_0^t  \Big\{|x^{i,N,n}_{\kappa_n(s)}|^{p_0-2}  +  |x^{i,N,n}_s-x^{i,N,n}_{\kappa_n(s)}|^{p_0-2}\Big\} |x^{i,N,n}_s-x^{i,N,n}_{\kappa_n(s)}| 
\\
& \qquad \qquad \times n^{1/4}\Big\{\bar{M}^n_s+ |x^{i,N,n}_{\kappa_n(s)}|+ \mathcal{W}_2 (\mu_{\kappa_n(s)}^{x,N,n}, \delta_0) \Big\}  ds 
\\
& \leq K \mathbb{E} \int_0^t  |x^{i,N,n}_s-x^{i,N,n}_{\kappa_n(s)}|^{p_0-1}  n^{1/4}\Big\{\bar{M}^n_s+ |x^{i,N,n}_{\kappa_n(s)}|+ \mathcal{W}_2 (\mu_{\kappa_n(s)}^{x,N,n}, \delta_0) \Big\} ds 
\\
& \qquad +  K \mathbb{E}\int_0^t   |x^{i,N,n}_{\kappa_n(s)}|^{p_0-2} |x^{i,N,n}_s-x^{i,N,n}_{\kappa_n(s)}|  n^{1/4}\Big\{\bar{M}^n_s+ |x^{i,N,n}_{\kappa_n(s)}|+ \mathcal{W}_2 (\mu_{\kappa_n(s)}^{x,N,n}, \delta_0) \Big\} ds 
\\
& \leq K  \mathbb{E} \int_0^t  n^{1/4}\Big\{\bar{M}^n_s+ |x^{i,N,n}_{\kappa_n(s)}|+ \mathcal{W}_2 (\mu_{\kappa_n(s)}^{x,N,n}, \delta_0) \Big\}  \mathbb{E}  \big(|x^{i,N,n}_s-x^{i,N,n}_{\kappa_n(s)}|^{p_0-1} |\mathscr{F}_{\kappa_n(s)}\big) ds 
\\
& \qquad +  K \mathbb{E}\int_0^t   |x^{i,N,n}_{\kappa_n(s)}|^{p_0-2}   n^{1/4}\Big\{\bar{M}^n_s+ |x^{i,N,n}_{\kappa_n(s)}|+ \mathcal{W}_2 (\mu_{\kappa_n(s)}^{x,N,n}, \delta_0) \Big\} \mathbb{E}\big( |x^{i,N,n}_s-x^{i,N,n}_{\kappa_n(s)}| |\mathscr{F}_{\kappa_n(s)}\big) ds,
\end{align*}
and then, using Lemma \ref{lem:one-step:mb}, one obtains,
\begin{align*}
T_1 & \leq K  \mathbb{E} \int_0^t  \Big\{\bar{M}^n_s+ |x^{i,N,n}_{\kappa_n(s)}|+ \mathcal{W}_2 (\mu_{\kappa_n(s)}^{x,N,n}, \delta_0) \Big\}^{p_0}  ds 
\\
& \qquad +  K \mathbb{E}\int_0^t   |x^{i,N,n}_{\kappa_n(s)}|^{p_0-2}   \Big\{\bar{M}^n_s+ |x^{i,N,n}_{\kappa_n(s)}|+ \mathcal{W}_2 (\mu_{\kappa_n(s)}^{x,N,n}, \delta_0) \Big\}^2 ds 
\end{align*}
for any $i\in\{1,\ldots, N\}$, $t\in[0,T]$ and $n,N\in\mathbb{N}$. Furthermore, by using Young's inequality, 
\begin{align} \label{eq:T1}
T_1 \leq K+ K \int_0^t  \mathbb{E} |x^{i,N,n}_{\kappa_n(s)}|^{p_0} ds + K  \int_0^t  \mathbb{E}\mathcal{W}_2^{p_0} (\mu_{\kappa_n(s)}^{x,N,n}, \delta_0)  ds 
\end{align}
 for any $i\in\{1,\ldots, N\}$, $t\in[0,T]$ and $n,N\in\mathbb{N}$.

For $T_2$, we use the following remainder formula, 
\begin{align} \label{eq:remain:T2}
|y|^{p_0-2}=|a|^{p_0-2} +(p_0-2) \int_0^1 |a+\theta(y-a)|^{p_0-4} (y-a)(a+\theta (y-a)) d\theta
\end{align}
for any $y, a \in \mathbb{R}^d$ along with the Cauchy-Schwarz inequality and Assumption B--\ref{asb:growth:n} to obtain the following estimate, 
\begin{align*}
T_2& := \frac{p_0(p_0-1)}{2} \mathbb{E} \int_0^t \big\{|x^{i,N,n}_s|^{p_0-2}-|x^{i,N,n}_{\kappa_n(s)}|^{p_0-2}\big\} |\sigma_{\kappa_n(s)}^n (x^{i,N,n}_{\kappa_n(s)}, \mu_{\kappa_n(s)}^{x,N,n})|^2 ds  
\\
& = \frac{p_0(p_0-1)(p_0-2)}{2}  \mathbb{E} \int_0^t \int_0^1 |x^{i,N,n}_{\kappa_n(s)} +\theta(x^{i,N,n}_s-x^{i,N,n}_{\kappa_n(s)})|^{p_0-4} 
\\
& \qquad \qquad \times (x^{i,N,n}_s-x^{i,N,n}_{\kappa_n(s)})(x^{i,N,n}_{\kappa_n(s)}+\theta (x^{i,N,n}_s-x^{i,N,n}_{\kappa_n(s)})) d\theta |\sigma_{\kappa_n(s)}^n (x^{i,N,n}_{\kappa_n(s)}, \mu_{\kappa_n(s)}^{x,N,n})|^2 ds
\\
& \leq K \mathbb{E} \int_0^t \big\{  |x^{i,N,n}_{\kappa_n(s)}| +|x^{i,N,n}_s-x^{i,N,n}_{\kappa_n(s)}|\}^{p_0-3}  |x^{i,N,n}_s-x^{i,N,n}_{\kappa_n(s)}| n^{1/3}\Big\{\bar{M}^n_s+|x^{i,N,n}_{\kappa_n(s)}| + \mathcal{W}_2( \mu_{\kappa_n(s)}^{x,N,n}, \delta_0)\Big\}^2 ds
\\
& \leq K \mathbb{E} \int_0^t |x^{i,N,n}_{\kappa_n(s)}|^{p_0-3}  n^{1/3}\Big\{\bar{M}^n_s+|x^{i,N,n}_{\kappa_n(s)}| + \mathcal{W}_2( \mu_{\kappa_n(s)}^{x,N,n}, \delta_0)\Big\}^2   \mathbb{E}\big(|x^{i,N,n}_s-x^{i,N,n}_{\kappa_n(s)}||\mathscr{F}_{\kappa_n(s)}\big) ds
\\
& \quad + K \mathbb{E} \int_0^t  n^{1/3}\Big\{\bar{M}^n_s+|x^{i,N,n}_{\kappa_n(s)}| + \mathcal{W}_2( \mu_{\kappa_n(s)}^{x,N,n}, \delta_0)\Big\}^2 \mathbb{E}\big(|x^{i,N,n}_s-x^{i,N,n}_{\kappa_n(s)}|^{p_0-2}|\mathscr{F}_{\kappa_n(s)} \big)ds,
\end{align*}
which on the application of Lemma \ref{lem:one-step:mb}  and Young's inequality yields
\begin{align} \label{eq:T2}
T_2& \leq K \mathbb{E} \int_0^t |x^{i,N,n}_{\kappa_n(s)}|^{p_0-3}  \Big\{\bar{M}^n_s+|x^{i,N,n}_{\kappa_n(s)}| + \mathcal{W}_2( \mu_{\kappa_n(s)}^{x,N,n}, \delta_0)\Big\}^3    ds \notag
\\
& \quad + Kn^{-1/3} \mathbb{E} \int_0^t  \Big\{\bar{M}^n_s+|x^{i,N,n}_{\kappa_n(s)}| + \mathcal{W}_2( \mu_{\kappa_n(s)}^{x,N,n}, \delta_0)\Big\}^{p_0} ds \notag
\\
& \leq  K + K \int_0^t \mathbb{E}  |x^{i,N,n}_{\kappa_n(s)}|^{p_0} ds + K \int_0^t  \mathbb{E} \mathcal{W}_2^{p_0}(\mu_{\kappa_n(s)}^{x,N,n}, \delta_0) ds 
\end{align}
 for any $i\in\{1,\ldots, N\}$, $t\in[0,T]$ and $n,N\in\mathbb{N}$.

For $T_3$, one uses equation \eqref{eq:remain:T2} and the Cauchy-Schwarz inequality to obtain the following, 
\begin{align*}
T_3&:= p_0 (p_0-1) \mathbb{E}  \int_0^t\int_Z  |\gamma_{\kappa_n(s)}^n (x^{i,N,n}_{\kappa_n(s)}, \mu_{\kappa_n(s)}^{x,N,n}, z)|^2 \int_0^1 (1-\theta)\Big\{ |x^{i,N,n}_s+\theta \gamma_{\kappa_n(s)}^n (x^{i,N,n}_{\kappa_n(s)}, \mu_{\kappa_n(s)}^{x,N,n}, z)| ^{p_0-2}
\\
& \qquad \qquad \qquad \qquad  - |x^{i,N,n}_{\kappa_n(s)}+\theta \gamma_{\kappa_n(s)}^n (x^{i,N,n}_{\kappa_n(s)}, \mu_{\kappa_n(s)}^{x,N,n}, z) |^{p_0-2} \Big\} d\theta  \nu(dz) ds
\\
& =  p_0 (p_0-1)  (p_0-2) \mathbb{E} \int_0^t\int_Z  |\gamma_{\kappa_n(s)}^n (x^{i,N,n}_{\kappa_n(s)}, \mu_{\kappa_n(s)}^{x,N,n}, z)|^2 
\\
& \qquad \times \int_0^1 (1-\theta) \int_0^1 |x^{i,N,n}_{\kappa_n(s)}+\theta \gamma_{\kappa_n(s)}^n (x^{i,N,n}_{\kappa_n(s)}, \mu_{\kappa_n(s)}^{x,N,n}, z)+\bar{\theta}(x^{i,N,n}_s-x^{i,N,n}_{\kappa_n(s)}) |^{p_0-4} (x^{i,N,n}_s-x^{i,N,n}_{\kappa_n(s)}) 
\\
& \qquad  \times \big(x^{i,N,n}_{\kappa_n(s)}+\theta \gamma_{\kappa_n(s)}^n (x^{i,N,n}_{\kappa_n(s)}, \mu_{\kappa_n(s)}^{x,N,n}, z)+\bar{\theta}(x^{i,N,n}_s-x^{i,N,n}_{\kappa_n(s)})\big) d\bar{\theta} d\theta  \nu(dz) ds
\\
& \leq K \mathbb{E} \int_0^t\int_Z  |\gamma_{\kappa_n(s)}^n (x^{i,N,n}_{\kappa_n(s)}, \mu_{\kappa_n(s)}^{x,N,n}, z)|^2 \big\{ |x^{i,N,n}_{\kappa_n(s)}|+| \gamma_{\kappa_n(s)}^n (x^{i,N,n}_{\kappa_n(s)}, \mu_{\kappa_n(s)}^{x,N,n}, z)|+|x^{i,N,n}_s-x^{i,N,n}_{\kappa_n(s)}|\big\}^{p_0-3}
\\
& \qquad \times  |x^{i,N,n}_s-x^{i,N,n}_{\kappa_n(s)}|   \nu(dz) ds
\\
& \leq K \mathbb{E} \int_0^t\int_Z |\gamma_{\kappa_n(s)}^n (x^{i,N,n}_{\kappa_n(s)}, \mu_{\kappa_n(s)}^{x,N,n}, z)|^2    \nu(dz) |x^{i,N,n}_{\kappa_n(s)}|^{p_0-3} \mathbb{E}\big( |x^{i,N,n}_s-x^{i,N,n}_{\kappa_n(s)}||\mathscr{F}_{\kappa_n(s)}\big)  ds
\\
& \qquad +  K \mathbb{E} \int_0^t\int_Z |\gamma_{\kappa_n(s)}^n (x^{i,N,n}_{\kappa_n(s)}, \mu_{\kappa_n(s)}^{x,N,n}, z)|^{p_0-1}   \nu(dz) \mathbb{E}\big( |x^{i,N,n}_s-x^{i,N,n}_{\kappa_n(s)}||\mathscr{F}_{\kappa_n(s)}\big) ds
\\
&\qquad + K \mathbb{E} \int_0^t\int_Z  |\gamma_{\kappa_n(s)}^n (x^{i,N,n}_{\kappa_n(s)}, \mu_{\kappa_n(s)}^{x,N,n}, z)|^2 \nu(dz) \mathbb{E}\big( |x^{i,N,n}_s-x^{i,N,n}_{\kappa_n(s)}|^{p_0-2}|\mathscr{F}_{\kappa_n(s)}\big) ds,
\end{align*}
and then one uses Lemma \ref{lem:one-step:mb} and Assumption $B$--\ref{asb:growth:n} to obtain
\begin{align*}
T_3  \leq & K \mathbb{E} \int_0^t n^{1/3} \Big\{\bar{M}^n_s+|x^{i,N,n}_{\kappa_n(s)}| + \mathcal{W}_2( \mu_{\kappa_n(s)}^{x,N,n}, \delta_0)\Big\}^2 |x^{i,N,n}_{\kappa_n(s)}|^{p_0-3}    \mathbb{E}\big( |x^{i,N,n}_s-x^{i,N,n}_{\kappa_n(s)}||\mathscr{F}_{\kappa_n(s)}\big)  ds
\\
& + K \mathbb{E} \int_0^t n^{1/3} \Big\{ \bar{M}^n_s+|x^{i,N,n}_{\kappa_n(s)}| + \mathcal{W}_2( \mu_{\kappa_n(s)}^{x,N,n}, \delta_0) \Big\}^{p_0-1}     \mathbb{E}\big( |x^{i,N,n}_s-x^{i,N,n}_{\kappa_n(s)}||\mathscr{F}_{\kappa_n(s)}\big)  ds
\\
& + K \mathbb{E} \int_0^t n^{1/3} \Big\{\bar{M}^n_s+|x^{i,N,n}_{\kappa_n(s)}| + \mathcal{W}_2( \mu_{\kappa_n(s)}^{x,N,n}, \delta_0)\Big\}^{2}   \mathbb{E}\big( |x^{i,N,n}_s-x^{i,N,n}_{\kappa_n(s)}|^{p_0-2}|\mathscr{F}_{\kappa_n(s)}\big)  ds
\\
 \leq & K \mathbb{E} \int_0^t  \Big\{\bar{M}^n_s+|x^{i,N,n}_{\kappa_n(s)}| + \mathcal{W}_2( \mu_{\kappa_n(s)}^{x,N,n}, \delta_0)\Big\}^3 |x^{i,N,n}_{\kappa_n(s)}|^{p_0-3}     ds
\\
& + K \mathbb{E} \int_0^t  \Big\{\bar{M}^n_s+|x^{i,N,n}_{\kappa_n(s)}| + \mathcal{W}_2( \mu_{\kappa_n(s)}^{x,N,n}, \delta_0)\Big\}^{p_0}       ds
\\
& + K \mathbb{E} \int_0^t n^{-1/3} \Big\{\bar{M}^n_s+|x^{i,N,n}_{\kappa_n(s)}| + \mathcal{W}_2( \mu_{\kappa_n(s)}^{x,N,n}, \delta_0)\Big\}^{p_0}    ds
\end{align*}
 for any $i\in\{1,\ldots, N\}$, $t\in[0,T]$ and $n,N\in\mathbb{N}$. Further, due to Young's inequality, one obtains
\begin{align} \label{eq:T3}
T_3 \leq K+ K\int_0^t    \mathbb{E} |x^{i,N,n}_{\kappa_n(s)}|^{p_0} ds + K \int_0^t   \mathbb{E} \mathcal{W}_2^{p_0} (\mu_{\kappa_n(s)}^{x,N,n}, \delta_0)  ds 
\end{align}
 for any $i\in\{1,\ldots, N\}$, $t\in[0,T]$ and $n,N\in\mathbb{N}$.
 
 Further, notice that
\begin{align} \label{eq:w2:mb}
\mathbb{E}\mathcal{W}_2^{p_0} (\mu_{\kappa_n(s)}^{x,N,n}, \delta_0)= \mathbb{E} \Big(\frac{1}{N} \sum_{j=1}^N |x^{i,N,n}_{\kappa_n(s)}|^2\Big)^{p_0/2} \leq \frac{1}{N} \sum_{j=1}^N \mathbb{E}|x^{i,N,n}_{\kappa_n(s)}|^{p_0} 
\end{align}
 for any $s\in[0,T]$ and $n,N\in\mathbb{N}$.
 
Thus, on combining the estimates from equations \eqref{eq:T1}, \eqref{eq:T2} and \eqref{eq:T3} in equation \eqref{eq:T1+T2+T3} and then using equation \eqref{eq:w2:mb}, one obtains
\begin{align*}
\sup_{i\in\{1,\ldots,N\}}\sup_{s\in[0,t]}\mathbb{E}|  x^{i,N,n}_t|^{p_0}   \leq K+ K\int_0^t   \sup_{i\in \{1,\ldots,N\}}\sup_{r\in[0,s]} \mathbb{E} |x^{i,N,n}_{\kappa_n(r)}|^{p_0} ds 
\end{align*}
for any $t\in[0,T]$ and $n,N\in\mathbb{N}$. The application of Gr\"onwall's lemma completes the proof. 
\end{proof}

The following corollary is a consequence of the above lemma.  
\begin{cor} \label{cor:one-step:rate}
Let Assumptions $A$--\ref{as:initial}, $B$--\ref{asb:coercivity:scheme}  and B--\ref{asb:growth:n} hold.   
 Then, 
\begin{align*}
\mathbb{E}|x_t^{i, N, n}- x_{\kappa_n(t)}^{i, N, n}|^{q}  \leq  K
\begin{cases}
 n^{-1},  & \mbox{ if } 2\leq q \leq p_0, 
\\
 n^{-q/2},  & \mbox{ if }  1\leq q \leq 2,
\end{cases}
\end{align*}
for any $i\in\{1,\ldots,N\}$, $t\in[0,T]$ and $n,N\in\mathbb{N}$, where $K>0$ does not depend on $n$ and $N$. 
\end{cor}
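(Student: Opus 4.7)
The plan is to apply moment estimates directly to the three integrals representing the one-step increment $X_t := x_t^{i,N,n} - x_{\kappa_n(t)}^{i,N,n}$, which from \eqref{eq:scheme} is a sum of a drift integral, a diffusion integral against $w^i$, and a compensated Poisson integral over $[\kappa_n(t), t]$. The key insight is that, unlike in Lemma \ref{lem:one-step:mb} where the tamed bounds in Assumption $B$--\ref{asb:growth:n} (with positive powers of $n$) were used to produce a conditional estimate, here we leverage the \emph{untamed} polynomial bounds (the second argument in each $\min$, also collected in Remark \ref{rem:pol:grow:rc}) together with the uniform moment bound $\sup_{i,t}\mathbb{E}|x_t^{i,N,n}|^{p_0} \leq K$ from Lemma \ref{lem:scheme:mb}.

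I would first treat $q=2$, applying the It\^o and compensated-Poisson isometries and Cauchy--Schwarz on the drift to obtain
\[
\mathbb{E}|X_t|^2 \leq K n^{-2} \sup \mathbb{E}|b^n|^2 + K n^{-1} \sup \mathbb{E}|\sigma^n|^2 + K n^{-1} \sup \mathbb{E}\textstyle\int_Z |\gamma^n|^2 \nu(dz).
\]
Each supremum is finite by combining the untamed polynomial bounds with the $p_0$-moment bound from Lemma \ref{lem:scheme:mb} (using that $p_0$ is large enough to accommodate the exponents $\chi+2$, $\chi/2+2$, $\chi/2+2$ that arise after squaring), yielding $\mathbb{E}|X_t|^2 \leq K n^{-1}$. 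For $q \in [1, 2]$, Jensen's inequality then gives $\mathbb{E}|X_t|^q \leq (\mathbb{E}|X_t|^2)^{q/2} \leq K n^{-q/2}$, matching the claimed rate.

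For $q \in [2, p_0]$, I would repeat the decomposition, using H\"older for the drift (giving an $n^{-q}$ contribution), the Burkholder--Davis--Gundy inequality for the diffusion ($n^{-q/2}$), and Kunita's first inequality for the jump part, producing a sum of a quadratic-variation contribution ($n^{-q/2}$) and a large-jump contribution of the form $K \,\mathbb{E}\int_{\kappa_n(t)}^t\!\int_Z |\gamma^n|^q \nu(dz)\,ds$. For $q \geq 2$ each of the first three contributions is automatically dominated by $Kn^{-1}$, so the desired estimate will follow once the large-jump term is shown to be $O(n^{-1})$.

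The main obstacle is precisely the large-jump contribution for $q$ close to $p_0$, where the untamed polynomial growth $|x|^{\chi/2+q}$ from the last line of Assumption $B$--\ref{asb:growth:n} may demand more moments of $x^{i,N,n}_{\kappa_n(\cdot)}$ than Lemma \ref{lem:scheme:mb} directly provides. I would resolve this via a H\"older interpolation inside the $\nu$-integral between the safe $q=2$ untamed bound and the tamed bound $Ln^{1/3}(\bar M^n+|x|+\mathcal{W}_2(\mu,\delta_0))^q$, absorbing the resulting powers of $n$ by noting that $p_0$ is taken sufficiently large relative to $\chi$ in the concrete applications of Section 5, and that the $p_0$-moment bound of Lemma \ref{lem:scheme:mb} is uniform in $n$ and $N$.
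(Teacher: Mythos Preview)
Your approach is essentially the intended one: the paper gives no proof beyond calling the corollary ``a consequence of the above lemma'', and the natural argument is precisely what you outline---decompose the increment from \eqref{eq:scheme}, apply H\"older/BDG/Kunita to the three pieces, use the \emph{untamed} polynomial bounds (the second argument of each $\min$ in Assumption $B$--\ref{asb:growth:n}), and invoke the uniform $p_0$-moment bound of Lemma \ref{lem:scheme:mb}. The case $q\in[1,2]$ via Jensen from $q=2$ is exactly right.

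There is one small inaccuracy worth flagging. You single out the large-jump term $n^{-1}\,\mathbb{E}\!\int_Z|\gamma^n|^{q}\nu(dz)$ as the only obstacle for $q$ near $p_0$, asserting that the other three contributions are ``automatically'' $O(n^{-1})$. In fact the same moment issue afflicts the drift and diffusion pieces: the untamed bound on $|b^n|^q$ produces $|x|^{(\chi/2+1)q}$ and on $|\sigma^n|^q$ produces $|x|^{(\chi/4+1)q}$, both of which may exceed $p_0$ for $q$ close to $p_0$; and falling back on the tamed bounds gives only $n^{-q/3}$ from the diffusion, which is worse than $n^{-1}$ for $q\in[2,3)$. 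Your proposed H\"older interpolation is not quite enough to recover the full stated range $[2,p_0]$ under only Assumptions $A$--\ref{as:initial}, $B$--\ref{asb:coercivity:scheme}, $B$--\ref{asb:growth:n}. The honest resolution---and the one implicit in the paper---is that the corollary is only ever invoked in the proof of Theorem \ref{thm:order-12} with $q=2$ and $q=2+\epsilon$, where the standing hypothesis $p_0\geq \chi(2+\epsilon)/\epsilon$ supplies ample moments for all four terms via the untamed bounds directly; no interpolation is needed there.
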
 

The following theorem gives the rate of convergence of the tamed Euler scheme \eqref{eq:scheme}.  
%-------------------
\begin{thm}
\label{thm:order-12}
Let Assumptions $A$--\ref{as:initial}, $A$--\ref{as:coercivity}, $A$--\ref{as:coercivity:p0}  to A--\ref{as:bounded}, $B$--\ref{asb:coercivity:scheme}, B--\ref{asb:growth:n} and B--\ref{asb:diff:rate} be satisfied. Then, 
the tamed Euler scheme  \eqref{eq:scheme} converges in mean-square sense to the true solution of the interacting particle system  \eqref{eq:interacting} associated to the McKean--Vlasov SDE \eqref{eq:sde}, and for any $\epsilon>0$ such that $p_0 \ge \chi (2+\epsilon)/\epsilon$ we have
\begin{align*}
\sup_{i\in\{1,\ldots,N\}} \sup_{t\in[0,T]}\mathbb{E}|x_t^{i,N}-x_t^{i,N,n}|^2 \leq K n^{-2/(2+\epsilon)}
\end{align*}
for any $n, N\in\mathbb{N}$, where the constant $K>0$ does not depend on $n$ and $N$. % and  is arbitrarily small. 
\end{thm}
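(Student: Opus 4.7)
The plan is to write the error $e^{i,n}_t := x^{i,N}_t - x^{i,N,n}_t$, apply It\^o's formula to $|e^{i,n}_t|^2$, take expectations (the stochastic integrals are martingales by the moment bounds in Lemma~\ref{lem:scheme:mb} and the bound $\sup_i\sup_t\mathbb E|x^{i,N}_t|^{p_0}<\infty$ inherited from Theorem~\ref{thm:eu} via the particle system), and arrive at
\begin{align*}
\mathbb{E}|e^{i,n}_t|^2
= \mathbb{E}\int_0^t \Bigl\{2\,e^{i,n}_s\!\cdot\!\Delta b_s + |\Delta\sigma_s|^2 + \int_Z |\Delta\gamma_s(z)|^2\nu(dz)\Bigr\}\,ds,
\end{align*}
where $\Delta b_s := b_s(x^{i,N}_s,\mu^{x,N}_s) - b^n_{\kappa_n(s)}(x^{i,N,n}_{\kappa_n(s)},\mu^{x,N,n}_{\kappa_n(s)})$ and analogously for $\Delta\sigma,\Delta\gamma$. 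Each $\Delta$ is split into four pieces by inserting intermediate evaluations: (a) the \emph{monotone piece} $b_s(x^{i,N}_s,\mu^{x,N}_s)-b_s(x^{i,N,n}_s,\mu^{x,N,n}_s)$, (b) the \emph{time-H\"older piece} $b_s(x^{i,N,n}_s,\mu^{x,N,n}_s)-b_{\kappa_n(s)}(x^{i,N,n}_s,\mu^{x,N,n}_s)$, (c) the \emph{state-increment piece} $b_{\kappa_n(s)}(x^{i,N,n}_s,\mu^{x,N,n}_s)-b_{\kappa_n(s)}(x^{i,N,n}_{\kappa_n(s)},\mu^{x,N,n}_{\kappa_n(s)})$, and (d) the \emph{taming piece} $b_{\kappa_n(s)}-b^n_{\kappa_n(s)}$ evaluated at the scheme.

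To exploit the monotonicity margin I would expand $|\Delta\sigma_s|^2$ (and the jump quadratic) via $|A+B|^2\le(1+\alpha)|A|^2+(1+\tfrac1\alpha)|B|^2$ with $A$ the monotone piece, choosing $\alpha>0$ with $1+\alpha<\eta$. Then Assumption $A$-\ref{as:monotonicity:rate} combined with Young's inequality on the drift monotone piece yields
\begin{align*}
2e^{i,n}_s\!\cdot\![\textrm{(a)}_b] + (1+\alpha)\{|\textrm{(a)}_\sigma|^2+\!\int|\textrm{(a)}_\gamma|^2\nu\}
\le K\bigl\{|e^{i,n}_s|^2 + \mathcal{W}_2^2(\mu^{x,N}_s,\mu^{x,N,n}_s)\bigr\},
\end{align*}
and the last Wasserstein term is absorbed once one takes $\sup_i$ by the empirical bound $\mathcal{W}_2^2(\mu^{x,N}_s,\mu^{x,N,n}_s)\le N^{-1}\sum_j|e^{j,n}_s|^2$. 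The time-H\"older piece (b) contributes $O(n^{-1})$ by Assumption~$A$-\ref{as:holder:time}, and the taming piece (d) contributes $O(n^{-2/(2+\epsilon)})$ directly by Assumption~$B$-\ref{asb:diff:rate}.

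The main obstacle is the state-increment piece (c): by Assumption~$A$-\ref{as:poly:Lips:b} and Remark~\ref{rem:poly:lips:sig:gam}, its contribution is controlled by
\begin{align*}
\mathbb{E}\bigl[(1+|x^{i,N,n}_s|+|x^{i,N,n}_{\kappa_n(s)}|)^{\chi}\,|x^{i,N,n}_s-x^{i,N,n}_{\kappa_n(s)}|^2\bigr]
\,+\,\mathbb{E}\,\mathcal{W}_2^2(\mu^{x,N,n}_s,\mu^{x,N,n}_{\kappa_n(s)}).
\end{align*}
I apply H\"older's inequality with conjugate exponents $(2+\epsilon)/\epsilon$ and $(2+\epsilon)/2$: the polynomial factor is bounded by Lemma~\ref{lem:scheme:mb} exactly under the hypothesis $p_0\ge\chi(2+\epsilon)/\epsilon$, while the increment factor gives $(\mathbb{E}|x^{i,N,n}_s-x^{i,N,n}_{\kappa_n(s)}|^{2+\epsilon})^{2/(2+\epsilon)}\le K\,n^{-2/(2+\epsilon)}$ by Corollary~\ref{cor:one-step:rate} (since $2<2+\epsilon\le p_0$). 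The measure-increment term is handled via the empirical bound reducing it to $\sup_j\mathbb{E}|x^{j,N,n}_s-x^{j,N,n}_{\kappa_n(s)}|^2 = O(n^{-1})$.

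Putting the four contributions together and using the empirical bound for $\mathcal{W}_2^2(\mu^{x,N}_s,\mu^{x,N,n}_s)$ produces
\begin{align*}
\sup_{i\in\{1,\ldots,N\}}\mathbb{E}|e^{i,n}_t|^2
\le K\,n^{-2/(2+\epsilon)} + K\int_0^t \sup_{i\in\{1,\ldots,N\}}\mathbb{E}|e^{i,n}_s|^2\,ds,
\end{align*}
and Gr\"onwall's lemma closes the estimate uniformly in $t\in[0,T]$ and $N$. The delicate bookkeeping is in Step~(c): verifying that every super-linearly growing residual created by expanding $|\Delta\sigma|^2$ and $\int|\Delta\gamma|^2\nu(dz)$ reduces, after the $(1+\alpha)/(1+1/\alpha)$ split and Young's inequality, to terms of the form treated above, so that the optimal exponent $p_0 \ge \chi(2+\epsilon)/\epsilon$ suffices and the $\eta>1$ margin is not exhausted.
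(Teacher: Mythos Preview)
Your proposal is correct and follows essentially the same approach as the paper's proof: It\^o's formula on $|e^{i,n}_t|^2$, the $(1+\alpha)/(1+1/\alpha)$ expansion of the quadratic terms (the paper writes this as $|A+B|^2\le\eta|A|^2+K|B|^2$) so that Assumption~$A$--\ref{as:monotonicity:rate} absorbs the monotone pieces, then the same three-way decomposition of the residuals into time-H\"older, state-increment, and taming parts, with the H\"older split $(2+\epsilon)/\epsilon,(2+\epsilon)/2$ combined with Lemma~\ref{lem:scheme:mb} and Corollary~\ref{cor:one-step:rate} on the state-increment piece, and Gr\"onwall to close. The only cosmetic difference is the order in which you take the time-H\"older and state-increment telescoping (you freeze time first, the paper freezes state first), which is immaterial since Assumptions~$A$--\ref{as:poly:Lips:b} and~$A$--\ref{as:holder:time} are uniform in the other variable.
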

%--------------------
\begin{proof}
On using It\^o's formula, 
 \begin{align}
 \mathbb{E}|x_t^{i,N} & -x_t^{i,N,n}|^2=\mathbb{E}|x_0^{i,N}-x_0^{i,N,n}|^2 + 2 \mathbb{E} \int_0^t (x_s^{i,N}-x_s^{i,N,n}) (b_s(x_s^{i, N}, \mu_s^{x,N})- b^n_{\kappa_n(s)}(x_{\kappa_n(s)}^{i, N, n}, \mu_{\kappa_n(s)}^{x,N, n})) ds   \notag
 \\
 & + 2 \mathbb{E} \int_0^t (x_s^{i,N}-x_s^{i,N,n}) (\sigma_s(x_s^{i, N}, \mu_s^{x,N})- \sigma^n_{\kappa_n(s)}(x_{\kappa_n(s)}^{i, N, n}, \mu_{\kappa_n(s)}^{x,N, n})) dw_s^i   \notag
 \\
 & + 2 \mathbb{E} \int_0^t \int_Z (x_s^{i,N}-x_s^{i,N,n}) (\gamma_s(x_s^{i, N}, \mu_s^{x,N}, z)- \gamma^n_{\kappa_n(s)}(x_{\kappa_n(s)}^{i, N, n}, \mu_{\kappa_n(s)}^{x,N, n}, z) ) \tilde{n}_p \notag (ds, dz)  
 \\
  & +  \mathbb{E} \int_0^t  |\sigma_s(x_s^{i, N}, \mu_s^{x,N})- \sigma^n_{\kappa_n(s)}(x_{\kappa_n(s)}^{i, N, n}, \mu_{\kappa_n(s)}^{x,N, n})|^2 ds   \notag
  \\
  & + \mathbb{E} \int_0^t \int_Z ( |x_s^{i,N}  -x_s^{i,N,n}+ \gamma_s(x_s^{i, N}, \mu_s^{x,N}, z)- \gamma^n_{\kappa_n(s)}(x_{\kappa_n(s)}^{i, N, n}, \mu_{\kappa_n(s)}^{x,N, n}, z) |^2 - |x_s^{i,N}  -x_s^{i,N,n}|^2 \notag
  \\
 & \qquad - 2 (x_s^{i,N}  -x_s^{i,N,n})(\gamma_s(x_s^{i, N}, \mu_s^{x,N}, z)- \gamma^n_{\kappa_n(s)}(x_{\kappa_n(s)}^{i, N, n}, \mu_{\kappa_n(s)}^{x,N, n}, z)) n_p(ds,dz) \notag
 \\
  = &~\mathbb{E}|x_0^{i,N}-x_0^{i,N,n}|^2 + 2 \mathbb{E} \int_0^t (x_s^{i,N}-x_s^{i,N,n}) (b_s(x_s^{i, N}, \mu_s^{x,N})- b^n_{\kappa_n(s)}(x_{\kappa_n(s)}^{i, N, n}, \mu_{\kappa_n(s)}^{x,N, n})) ds  \notag
 \\
  & +  \mathbb{E} \int_0^t  |\sigma_s(x_s^{i, N}, \mu_s^{x,N})- \sigma^n_{\kappa_n(s)}(x_{\kappa_n(s)}^{i, N, n}, \mu_{\kappa_n(s)}^{x,N, n})|^2 ds  \notag
  \\
  & + \mathbb{E} \int_0^t \int_Z |\gamma_s(x_s^{i, N}, \mu_s^{x,N}, z)- \gamma^n_{\kappa_n(s)}(x_{\kappa_n(s)}^{i, N, n}, \mu_{\kappa_n(s)}^{x,N, n}, z) |^2  \nu(dz)ds \label{eq:ito:rate}
 \end{align}
 for any $i\in\{1,\ldots,N\}$, $t\in[0,T]$ and $n,N\in\mathbb{N}$. Notice that for any $d_1 \times m_1$ matrices $A$ and $B$, one uses Cauchy-Schwarz and Young's inequalities to obtain
 \begin{align*}
 |A+B|^2 & = |A|^2 + |B|^2 +2 \sum_{j=1}^{d_1}\sum_{k=1}^{m_1}  A_{jk} B_{jk} \leq |A|^2 + |B|^2 +2|A||B| 
 \\
 & \leq |A|^2 + |B|^2 + (\eta-1)|A|^2 + \frac{|B|^2}{4(\eta-1)} = p_1 |A|^2 + \frac{4\eta-3}{4(\eta-1)}|B|^2,
\end{align*}   
which thus yields the following estimates, 
 \begin{align}
 |\sigma_s&(x_s^{i, N},   \mu_s^{x,N})-\sigma^n_{\kappa_n(s)}(x_{\kappa_n(s)}^{i, N, n}, \mu_{\kappa_n(s)}^{x,N, n})|^2   \notag
 \\
 =  & |\sigma_s(x_s^{i, N},  \mu_s^{x,N})-\sigma_s(x_s^{i, N,n}, \mu_s^{x,N,n})+\sigma_s(x_s^{i, N,n}, \mu_s^{x,N,n})- \sigma^n_{\kappa_n(s)}(x_{\kappa_n(s)}^{i, N, n}, \mu_{\kappa_n(s)}^{x,N, n})|^2 \notag
 \\
  \leq & \, \eta |\sigma_s(x_s^{i, N},  \mu_s^{x,N})-\sigma_s(x_s^{i, N,n}, \mu_s^{x,N,n})|^2 + K |\sigma_s(x_s^{i, N,n}, \mu_s^{x,N,n})- \sigma^n_{\kappa_n(s)}(x_{\kappa_n(s)}^{i, N, n}, \mu_{\kappa_n(s)}^{x,N, n})|^2 \label{eq:sig:rate}
 \\
 \int_Z & |\gamma_s( x_s^{i, N},  \mu_s^{x,N}, z)- \gamma^n_{\kappa_n(s)}(x_{\kappa_n(s)}^{i, N, n}, \mu_{\kappa_n(s)}^{x,N, n}, z)|^2 \nu(dz)  \notag
 \\
 = & \int_Z |\gamma_s(x_s^{i, N},  \mu_s^{x,N}, z)-\gamma_s(x_s^{i, N,n}, \mu_s^{x,N,n},z)  +\gamma_s(x_s^{i, N,n}, \mu_s^{x,N,n},z)- \gamma^n_{\kappa_n(s)}(x_{\kappa_n(s)}^{i, N, n}, \mu_{\kappa_n(s)}^{x,N, n}, z)|^2 \nu(dz) \notag
 \\
  \leq & \eta \int_Z|\gamma_s(x_s^{i, N},  \mu_s^{x,N}, z)-\gamma_s(x_s^{i, N,n}, \mu_s^{x,N,n}, z)|^2 \nu(dz)  \notag
 \\
 & + K \int_Z |\gamma_s(x_s^{i, N,n}, \mu_s^{x,N,n}, z)- \gamma^n_{\kappa_n(s)}(x_{\kappa_n(s)}^{i, N, n}, \mu_{\kappa_n(s)}^{x,N, n}, z)|^2 \nu(dz) \label{eq:gam:rate}
 \end{align}
 almost surely for any $i\in\{1,\ldots,N\}$, $t\in[0,T]$ and $n,N\in\mathbb{N}$. Also, 
 \begin{align}
 (b_s(x_s^{i, N}, \mu_s^{x,N}) & - b^n_{\kappa_n(s)}(x_{\kappa_n(s)}^{i, N, n}, \mu_{\kappa_n(s)}^{x,N, n}))= (b_s(x_s^{i, N}, \mu_s^{x,N})- b_s(x_s^{i, N,n}, \mu_s^{x,N,n})) \notag
 \\
 & +(b_s(x_s^{i, N,n}, \mu_s^{x,N,n})- b^n_{\kappa_n(s)}(x_{\kappa_n(s)}^{i, N, n}, \mu_{\kappa_n(s)}^{x,N, n})) \label{eq:b:rate}
 \end{align}
 almost surely for any $i\in\{1,\ldots,N\}$, $t\in[0,T]$ and $n,N\in\mathbb{N}$. On substituting the values from equations \eqref{eq:sig:rate}, \eqref{eq:gam:rate} and \eqref{eq:b:rate} in equation \eqref{eq:ito:rate}, one obtains
 \begin{align*}
  \mathbb{E}| & ~x_t^{i,N}  -x_t^{i,N,n}|^2 \leq \mathbb{E}|x_0^{i,N}-x_0^{i,N,n}|^2 +  \mathbb{E} \int_0^t \Big\{ 2 (x_s^{i, N}-x_s^{i, N,n}) (b_s(x_s^{i, N}, \mu_s^{x,N})- b_s(x_s^{i, N,n}, \mu_s^{x,N,n}))   
  \\
  & + \eta |\sigma_s(x_s^{i, N},  \mu_s^{x,N})-\sigma_s(x_s^{i, N,n}, \mu_s^{x,N,n})|^2 + \eta \int_Z|\gamma_s(x_s^{i, N},  \mu_s^{x,N}, z)-\gamma_s(x_s^{i, N,n}, \mu_s^{x,N,n}, z)|^2 \nu(dz) \Big\} ds 
  \\
  & +2 \mathbb{E} \int_0^t (x_s^{i, N}-x_s^{i, N,n})(b_s(x_s^{i, N,n}, \mu_s^{x,N,n})- b^n_{\kappa_n(s)}(x_{\kappa_n(s)}^{i, N, n}, \mu_{\kappa_n(s)}^{x,N, n})) ds 
  \\
   & +K \mathbb{E} \int_0^t  |\sigma_s(x_s^{i, N,n}, \mu_s^{x,N,n})- \sigma^n_{\kappa_n(s)}(x_{\kappa_n(s)}^{i, N, n}, \mu_{\kappa_n(s)}^{x,N, n})|^2 ds 
   \\
   & +K \mathbb{E} \int_0^t  \int_Z |\gamma_s(x_s^{i, N,n}, \mu_s^{x,N,n}, z)- \gamma^n_{\kappa_n(s)}(x_{\kappa_n(s)}^{i, N, n}, \mu_{\kappa_n(s)}^{x,N, n}, z)|^2 \nu(dz) ds 
 \end{align*}
 which on using Assumption A--\ref{as:monotonicity:rate}, Cauchy--Schwarz and Young's inequalities, yields
 \begin{align}
   \mathbb{E}| x_t^{i,N} & -x_t^{i,N,n}|^2 \leq   \mathbb{E}|x_0^{i,N}-x_0^{i,N,n}|^2 +\int_0^t \mathbb{E}| x_s^{i,N}  -x_s^{i,N,n}|^2 ds \notag
   \\
 & + K \mathbb{E} \int_0^t |b_s(x_s^{i, N,n}, \mu_s^{x,N,n})- b^n_{\kappa_n(s)}(x_{\kappa_n(s)}^{i, N, n}, \mu_{\kappa_n(s)}^{x,N, n})|^2 ds \notag
  \\
   & +K \mathbb{E} \int_0^t  |\sigma_s(x_s^{i, N,n}, \mu_s^{x,N,n})- \sigma^n_{\kappa_n(s)}(x_{\kappa_n(s)}^{i, N, n}, \mu_{\kappa_n(s)}^{x,N, n})|^2 ds \notag
   \\
   & +K \mathbb{E} \int_0^t  \int_Z |\gamma_s(x_s^{i, N,n}, \mu_s^{x,N,n}, z)- \gamma^n_{\kappa_n(s)}(x_{\kappa_n(s)}^{i, N, n}, \mu_{\kappa_n(s)}^{x,N, n}, z)|^2 \nu(dz) ds \notag
   \\
   =:& ~ \mathbb{E}|x_0^{i,N}-x_0^{i,N,n}|^2+\int_0^t \mathbb{E}| x_s^{i,N}  -x_s^{i,N,n}|^2 ds +E_1+E_2+E_3 \label{eq:E1+E2+E3}
 \end{align}
  almost surely for any $i\in\{1,\ldots,N\}$, $t\in[0,T]$ and $n,N\in\mathbb{N}$. 
  
  For $E_1$, one writes
  \begin{align*}
  E_1 := & ~K \mathbb{E} \int_0^t |b_s(x_s^{i, N,n}, \mu_s^{x,N,n})- b^n_{\kappa_n(s)}(x_{\kappa_n(s)}^{i, N, n}, \mu_{\kappa_n(s)}^{x,N, n})|^2 ds 
  \\
   \leq & ~ K \mathbb{E} \int_0^t |b_s(x_s^{i, N,n}, \mu_s^{x,N,n})- b_s(x_{\kappa_n(s)}^{i, N,n}, \mu_{\kappa_n(s)}^{x,N,n})|^2 ds 
  \\
  & + K \mathbb{E} \int_0^t |b_s(x_{\kappa_n(s)}^{i, N,n}, \mu_{\kappa_n(s)}^{x,N,n})- b_{\kappa_n(s)}(x_{\kappa_n(s)}^{i, N, n}, \mu_{\kappa_n(s)}^{x,N, n})|^2 ds
  \\
  & + K \mathbb{E} \int_0^t |b_{\kappa_n(s)}(x_{\kappa_n(s)}^{i, N,n}, \mu_{\kappa_n(s)}^{x,N,n})- b^n_{\kappa_n(s)}(x_{\kappa_n(s)}^{i, N, n}, \mu_{\kappa_n(s)}^{x,N, n})|^2 ds,
  \end{align*}
  and then the application of Assumptions A--\ref{as:poly:Lips:b} and A--\ref{as:holder:time} yields the following estimates, 
  \begin{align*}
  E_1  \leq & ~ K \mathbb{E} \int_0^T \big\{(1+|x_s^{i, N,n}|+ |x_{\kappa_n(s)}^{i, N,n}|)^{\chi} |x_s^{i, N,n}-x_{\kappa_n(s)}^{i, N,n}|^2 + \mathcal{W}_2^2 (\mu_s^{x,N,n}, \mu_{\kappa_n(s)}^{x,N,n}) \big\} ds 
  \\
  & + K \mathbb{E} \int_0^T |s- \kappa_n(s)|^2 ds + K \mathbb{E} \int_0^T |b_{\kappa_n(s)}(x_{\kappa_n(s)}^{i, N,n}, \mu_{\kappa_n(s)}^{x,N,n})- b^n_{\kappa_n(s)}(x_{\kappa_n(s)}^{i, N, n}, \mu_{\kappa_n(s)}^{x,N, n})|^2 ds,
  \end{align*}
  which on using H\"older's inequality and Assumption B--\ref{asb:diff:rate} gives
  \begin{align*}
  E_1  \leq & ~ K \int_0^t \big\{\mathbb{E} (1+|x_s^{i, N,n}|+ |x_{\kappa_n(s)}^{i, N,n}|)^{\chi(2+\epsilon)/\epsilon}\big\}^{\epsilon/(2+\epsilon)} \big\{\mathbb{E}|x_s^{i, N,n}-x_{\kappa_n(s)}^{i, N,n}|^{2+\epsilon}\}^{2/(2+\epsilon)} ds 
  \\
  &+ K n^{-1} + K \int_0^t\mathbb{E}  \mathcal{W}_2^2 (\mu_s^{x,N,n}, \mu_{\kappa_n(s)}^{x,N,n})  ds
  \end{align*}
  for any $i\in\{1,\ldots,N\}$, $t\in[0,T]$ and $n,N\in\mathbb{N}$. Also, notice that
  \begin{align} \label{eq:w2:rate:one-step}
  \mathbb{E}  \mathcal{W}_2^2 (\mu_s^{x,N,n}, \mu_{\kappa_n(s)}^{x,N,n})= \frac{1}{N} \sum_{j=1}^N  \mathbb{E}|x_s^{j, N,n}-x_{\kappa_n(s)}^{j, N,n}|^2 
  \end{align}
  and thus, due to Lemma \ref{lem:scheme:mb}, Corollary \ref{cor:one-step:rate} and equation \eqref{eq:w2:rate:one-step}, one has
  \begin{align} \label{eq:E1}
   E_1  \leq K n^{-2/(2+\epsilon)}
  \end{align}
    for any $i\in\{1,\ldots,N\}$, $t\in[0,T]$ and $n,N\in\mathbb{N}$.
   
   Furthermore, one can write 
\begin{align*}
E_2 & := K \mathbb{E} \int_0^t  |\sigma_s(x_s^{i, N,n}, \mu_s^{x,N,n})- \sigma^n_{\kappa_n(s)}(x_{\kappa_n(s)}^{i, N, n}, \mu_{\kappa_n(s)}^{x,N, n})|^2 ds \notag
\\
& \leq  K \mathbb{E} \int_0^t  |\sigma_s(x_s^{i, N,n}, \mu_s^{x,N,n})- \sigma_s(x_{\kappa_n(s)}^{i, N, n}, \mu_{\kappa_n(s)}^{x,N, n})|^2 ds
\\
& \quad +  K \mathbb{E} \int_0^t |\sigma_s(x_{\kappa_n(s)}^{i, N, n}, \mu_{\kappa_n(s)}^{x,N, n}) - \sigma_{\kappa_n(s)}(x_{\kappa_n(s)}^{i, N, n}, \mu_{\kappa_n(s)}^{x,N, n})|^2 ds  
\\
 & \quad + K \mathbb{E} \int_0^t  |\sigma_s(x_s^{i, N,n}, \mu_s^{x,N,n})- \sigma^n_{\kappa_n(s)}(x_{\kappa_n(s)}^{i, N, n}, \mu_{\kappa_n(s)}^{x,N, n})|^2 ds,
\end{align*}    
 which on the application of Remark \ref{rem:poly:lips:sig:gam}, Assumptions A--\ref{as:holder:time} and B--\ref{asb:diff:rate} yields
 \begin{align*}
 E_2  \leq & ~ K \int_0^t \big\{\mathbb{E} (1+|x_s^{i, N,n}|+ |x_{\kappa_n(s)}^{i, N,n}|)^{\chi(2+\epsilon)/\epsilon}\big\}^{\epsilon/(2+\epsilon)} \big\{\mathbb{E}|x_s^{i, N,n}-x_{\kappa_n(s)}^{i, N,n}|^{2+\epsilon}\}^{2/(2+\epsilon)} ds 
  \\
  &+ K n^{-1} + K \int_0^t\mathbb{E}  \mathcal{W}_2^2 (\mu_s^{x,N,n}, \mu_{\kappa_n(s)}^{x,N,n})  ds,
 \end{align*}
 and hence due to Lemma \ref{lem:scheme:mb} and Lemma \ref{lem:one-step:mb}, one obtains
 \begin{align} \label{eq:E2}
  E_2 \leq K n^{-2/(2+\epsilon)}
\end{align}     
  for any $i\in\{1,\ldots,N\}$, $t\in[0,T]$ and $n,N\in\mathbb{N}$.
  
   Finally, one estimates $E_3$ by
    \begin{align*}
    E_3 & := K \mathbb{E} \int_0^t  \int_Z |\gamma_s(x_s^{i, N,n}, \mu_s^{x,N,n}, z)- \gamma^n_{\kappa_n(s)}(x_{\kappa_n(s)}^{i, N, n}, \mu_{\kappa_n(s)}^{x,N, n}, z)|^2 \nu(dz) ds \notag
    \\
   & = K \mathbb{E} \int_0^t  \int_Z |\gamma_s(x_s^{i, N,n}, \mu_s^{x,N,n}, z)- \gamma_{s}(x_{\kappa_n(s)}^{i, N, n}, \mu_{\kappa_n(s)}^{x,N, n}, z)|^2 \nu(dz) ds
   \\
   & \quad + K \mathbb{E} \int_0^t  \int_Z |\gamma_s(x_{\kappa_n(s)}^{i, N, n}, \mu_{\kappa_n(s)}^{x,N, n},  z)- \gamma_{\kappa_n(s)}(x_{\kappa_n(s)}^{i, N, n}, \mu_{\kappa_n(s)}^{x,N, n}, z)|^2 \nu(dz) ds
   \\
   & \quad + K \mathbb{E} \int_0^t  \int_Z |\gamma_{\kappa_n(s)}(x_{\kappa_n(s)}^{i, N, n}, \mu_{\kappa_n(s)}^{x,N, n},  z)- \gamma_{\kappa_n(s)}^n(x_{\kappa_n(s)}^{i, N, n}, \mu_{\kappa_n(s)}^{x,N, n}, z)|^2 \nu(dz) ds
    \end{align*}
and then uses Remark \ref{rem:poly:lips:sig:gam}, Assumptions A--\ref{as:holder:time} and B--\ref{asb:diff:rate} along with Lemma \ref{lem:scheme:mb} and Lemma \ref{lem:one-step:mb} to obtain 
      \begin{align} 
    E_3  \leq K n^{-2/(2+\epsilon)} \label{eq:E3}
  \end{align} 
  for any $i\in\{1,\ldots,N\}$, $t\in[0,T]$ and $n,N\in\mathbb{N}$. On substituting the estimates from equations \eqref{eq:E1}, \eqref{eq:E2} and \eqref{eq:E3} in equation \eqref{eq:E1+E2+E3}, 
  \begin{align*}
  \mathbb{E}| x_t^{i,N}  -x_t^{i,N,n}|^2 \leq  \mathbb{E}|x_0^{i,N}-x_0^{i,N,n}|^2+\int_0^t \mathbb{E}| x_s^{i,N}  -x_s^{i,N,n}|^2 ds +K n^{-2/(2+\epsilon)} 
\end{align*}   
 for any $i\in\{1,\ldots,N\}$, $t\in[0,T]$ and $n,N\in\mathbb{N}$. The proof is completed on using Gr\"onwall's lemma. 
\end{proof}

\section{Applications}

In recent years, researchers have shown growing interests in the study of McKean--Vlasov stochastic differential equations with Markovian switching (SDEwMS), see for example \cite{bensoussan2020, nguyen2020} and references therein. 
The McKean--Vlasov stochastic delay differential equations (SDDEs) have recently received considerable attention from researchers, see for example, \cite{chen2019maximumprinciple} for optimal control, \cite{fouque2020} for deep learning and references therein.
%} 
Motivated by this, we give immediate applications of our results to McKean--Vlasov SDEwMS and McKean--Vlasov SDDEs, both driven by L\'evy noise. 

\subsection{McKean--Vlasov SDEs with Markovian switching driven by L\'evy noise}
Let $m_0$ be a fixed positive integer and  $\alpha:=(\alpha_t;t\geq 0)$ be a continuous-time Markov chain with state space $\mathcal{S}=\{1,2,\ldots,m_0 \}$ and generator $Q=(q_{i_0j_0}; i_0, j_0 \in \mathcal{S})$, such that its transition probabilities are %given by %y matrix is given by,  
\begin{align}  \label{eq:MProb}
\mathbb{P}(\alpha_{t+\delta}=j_0|\alpha_t=i_0)=
\begin{cases}
q_{i_0j_0}\delta+o(\delta), &\text{if } i_0\neq j_0,\\
1+q_{i_0j_0}\delta+o(\delta), &\text{if } i_0= j_0,
\end{cases} 
\end{align}
for any $\delta>0$ where $o(\delta)$ stands for the Bachmann--Landau little--o notation,  $q_{i_0j_0}\geq 0$,  for any $i_0\neq j_0\in \mathcal{S}$ and $q_{i_0i_0}=-\sum_{j_0\neq i_0} q_{i_0j_0}$ for any $i_0\in\mathcal{S}$. Also, assume that $f:[0, T]\times \mathbb{R}^d\times \mathcal{S} \times \mathcal{P}_2(\mathbb{R}^d) \to \mathbb{R}^d$, $g:[0, T]\times \mathbb{R}^d\times \mathcal{S} \times \mathcal{P}_2(\mathbb{R}^d) \to \mathbb{R}^{d \times m}$ and $h: [0, T]\times \mathbb{R}^d\times \mathcal{S} \times \mathcal{P}_2(\mathbb{R}^d)\times Z \to \mathbb{R}^d$ are measurable functions. Consider the following $d$--dimensional McKean--Vlasov SDE with Markovian switching driven by L\'evy noise,  
\begin{align} \label{eq:sdems}
y_t=y_0+\int^t_0 f_s(y_s,\alpha_s, L_{y_s})ds+\int^t_0 g_s(y_s, \alpha_s, L_{y_s})dw_s +\int^t_0 \int_Z h_s(y_s, \alpha_s, L_{y_s}, z) \tilde{n}_p(ds,dz)
\end{align} 
almost surely for any $t\in[0,T]$, where the initial value $y_0$ is an $\mathscr{F}_0$-measurable random variable taking values in $\mathbb{R}^d$. 
\begin{rem}
%\textcolor{red}{
If $m_0=1$, \textit{i.e.}, the Markov chain $\alpha$ remains at the same state throughout the interval $[0,T]$, then equation \eqref{eq:sdems} becomes a McKean--Vlasov SDE driven by L\'evy noise. Hence,  all  results obtained in this section  also hold true for  McKean--Vlasov SDEs driven by L\'evy noise.
%}
\end{rem}
Let $\bar{p}_0\geq 2$ be a fixed constant. We make the following assumptions. 
\begin{asAt} \label{ast:initial:ms}
$\mathbb{E}|y_0|^{\bar{p}_0} < \infty$. 
\end{asAt}
%-----------------
\begin{asAt} \label{ast:coercivity:ms}
There exists a constant $L>0$   such that for every $i_0 \in \mathcal{S}$, 
\begin{align*}
2  xf_t(x, i_0,\mu) + |g_t(x, i_0,\mu)|^2 +  \int_Z |h_t(x,i_0,\mu, z)|^2 \nu(dz) \leq L\big\{1+|x|^{2}+\mathcal{W}_2^{2}(\mu,\delta_0) \big\}
\end{align*}
 for any $t\in[0,T]$, $x\in\mathbb{R}^d$  and $\mu \in \mathcal{P}_2(\mathbb{R}^d)$. 
\end{asAt}
%----------------
\begin{asAt} \label{ast:monotonicity:ms}
There exists a constant $L>0$ such that for every $i_0\in\mathcal{S}$, 
\begin{align*}
2(x-\bar{x})(f_t(x, i_0,\mu)-f_t(\bar{x}, i_0,\bar{\mu})) & + |g_t(x,i_0,\mu)-g_t(\bar{x}, i_0,\bar{\mu})|^2 +\int_Z |h_t(x,i_0,\mu, z)-h_t(\bar{x},i_0,\bar{\mu}, z)|^2 \nu(dz) 
\\
& \leq L\big\{|x-\bar{x}|^2 + \mathcal{W}_2^2(\mu,\bar{\mu}) \big\}
\end{align*}
 for any $t\in[0,T]$, $x, \bar{x}\in\mathbb{R}^d$ and $\mu, \bar{\mu} \in \mathcal{P}_2(\mathbb{R}^d)$. 
\end{asAt}
%----------------
\begin{asAt} \label{ast:continuity:ms}
For any  $t\in[0,T]$ and $i_0\in\mathcal{S}$,  $f_t(x,i_0,\mu)$ is a continuous function of $x\in\mathbb{R}^d$ and $\mu\in\mathcal{P}_2(\mathbb{R}^d)$. 
\end{asAt}
%-----------------
\begin{asAt} \label{ast:coercivity:ms:p0}
There exists a constant $L>0$   such that for every $i_0 \in \mathcal{S}$, 
\begin{align*}
&2|x|^{\bar{p}_0-2}  xf_t(x, i_0,\mu) +(\bar{p}_0-1) |x|^{\bar{p}_0-2} |g_t(x, i_0,\mu)|^2 
\\
& + 2 (\bar{p}_0-1) \int_Z |h_t(x,i_0,\mu, z)|^2 \int_0^1 (1-\theta)| x+\theta h_t(x,i_0,\mu, z)|^{\bar{p}_0-2} d\theta \nu(dz) \leq L\big\{1+|x|^{\bar{p}_0}+\mathcal{W}_2^{\bar{p}_0}(\mu,\delta_0) \big\}
\end{align*}
 for any $x\in\mathbb{R}^d$, $t\in[0,T]$ and $\mu \in \mathcal{P}_2(\mathbb{R}^d)$. 
\end{asAt}
%---------------------
\begin{thm}[\textbf{Existence, Uniqueness and Moment Bound}] \label{thm:eu:ms}
Let Assumptions $\bar{A}$--\ref{ast:initial:ms} (with $\bar{p}_0=2$), $\bar{A}$--\ref{ast:coercivity:ms}, $\bar{A}$--\ref{ast:monotonicity:ms} and $\bar{A}$--\ref{ast:continuity:ms} be satisfied. Then, there exists a unique $\mathbb{R}^d$-valued c\'ad\'ag process $y$ satisfying the McKean--Vlasov SDEwMS \eqref{eq:sdems} such that 
\begin{align*}
\sup_{t\in[0,T]}\mathbb{E}|y_t|^{2} \leq K,
\end{align*}
where $K:=K(\mathbb{E}|x_0|^{2},m,d,L)$ is a positive constant. 
In addition, if Assumptions $\bar{A}$--\ref{as:initial} and $\bar{A}$--\ref{as:coercivity:p0} hold { for $\bar{p}_0> 2$}, then  
\begin{align*}
\sup_{t\in[0,T]}\mathbb{E}|y_t|^{\bar{p}_0} \leq K,
\end{align*}
where $K:=K(\mathbb{E}|x_0|^{\bar{p}_0},m,d,\bar{p}_0,L)$ is a positive constant. 
\end{thm}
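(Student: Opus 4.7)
The proof plan is to recast the McKean--Vlasov SDEwMS \eqref{eq:sdems} as a McKean--Vlasov SDE of the form \eqref{eq:sde} with \emph{random} coefficients driven by the Markov chain $\alpha$, and then invoke Theorem \ref{thm:eu} directly. Concretely, I would define
\begin{align*}
b_t(\omega,x,\mu) := f_t(x,\alpha_{t-}(\omega),\mu), \quad \sigma_t(\omega,x,\mu) := g_t(x,\alpha_{t-}(\omega),\mu), \quad \gamma_t(\omega,x,\mu,z) := h_t(x,\alpha_{t-}(\omega),\mu,z),
\end{align*}
using the left-continuous modification of $\alpha$ to guarantee $\mathscr{P}\otimes\mathscr{B}(\mathbb{R}^d)\otimes\mathscr{B}(\mathcal{P}_2(\mathbb{R}^d))$-measurability (and its jump-variable analogue for $\gamma$). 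Since $\alpha$ has at most countably many jumps on $[0,T]$, replacing $\alpha_t$ by $\alpha_{t-}$ only changes the integrands on a set of Lebesgue measure zero, so the solution to \eqref{eq:sdems} with either version is the same; equivalently, one may view $\alpha$ as $\mathscr{F}_t$-adapted and absorb the null modification into the pathwise construction.

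Next I would verify that Assumptions $A$--\ref{as:initial}--$A$--\ref{as:coercivity:p0} for the triple $(b,\sigma,\gamma)$ follow from $\bar{A}$--\ref{ast:initial:ms}--$\bar{A}$--\ref{ast:coercivity:ms:p0}. Assumption $\bar{A}$--\ref{ast:initial:ms} is identical to $A$--\ref{as:initial}. Because the constant $L$ in $\bar{A}$--\ref{ast:coercivity:ms}, $\bar{A}$--\ref{ast:monotonicity:ms} and $\bar{A}$--\ref{ast:coercivity:ms:p0} is uniform in $i_0\in\mathcal{S}$, the coercivity, monotonicity and higher-order coercivity inequalities hold pathwise with the \emph{deterministic} majorants $M_t\equiv 1$ and $\bar{M}_t\equiv 1$, trivially satisfying $\sup_{t}\mathbb{E}M_t<\infty$ and $\sup_{t}\mathbb{E}\bar{M}_t<\infty$; in particular Assumptions $A$--\ref{as:coercivity}, $A$--\ref{as:monotonicity} and $A$--\ref{as:coercivity:p0} are immediate. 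Assumption $A$--\ref{as:continuity} follows from $\bar{A}$--\ref{ast:continuity:ms} since on the full-probability set where $\alpha_{t-}(\omega)\in\mathcal{S}$, $b_t(\omega,\cdot,\cdot)=f_t(\cdot,\alpha_{t-}(\omega),\cdot)$ is continuous in $(x,\mu)$ by assumption.

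With all hypotheses of Theorem \ref{thm:eu} in place, Theorem \ref{thm:eu} yields the existence of a unique $\mathbb{R}^d$-valued c\`adl\`ag solution $y$ to the SDE with random coefficients $(b,\sigma,\gamma)$, which is exactly \eqref{eq:sdems}, together with the moment bounds $\sup_{t\in[0,T]}\mathbb{E}|y_t|^2\le K$ and, under the $\bar{p}_0>2$ hypothesis, $\sup_{t\in[0,T]}\mathbb{E}|y_t|^{\bar{p}_0}\le K$. The constants $K$ inherit the dependence stated in the theorem (with no dependence on $\bar{M}_t$ since $\bar{M}_t\equiv 1$).

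The only delicate step is the measurability/predictability reduction: one must be careful that $(\omega,t)\mapsto \alpha_t(\omega)$ is not automatically predictable, but the left-continuous version $\alpha_{t-}$ is, and produces an SDE with identical dynamics almost surely. The remaining work is a bookkeeping translation of the assumption lists, and there is no new analytic obstacle beyond what is already contained in Theorem \ref{thm:eu}.
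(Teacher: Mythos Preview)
Your proposal is correct and follows essentially the same route as the paper: define the random coefficients $b_t(x,\mu)=f_t(x,\alpha_t,\mu)$, $\sigma_t(x,\mu)=g_t(x,\alpha_t,\mu)$, $\gamma_t(x,\mu,z)=h_t(x,\alpha_t,\mu,z)$, observe that Assumptions $A$--\ref{as:initial} through $A$--\ref{as:coercivity:p0} follow from $\bar{A}$--\ref{ast:initial:ms} through $\bar{A}$--\ref{ast:coercivity:ms:p0} with the constant majorants $M_t=\bar{M}_t=1$, and invoke Theorem~\ref{thm:eu}. Your additional care in passing to the predictable modification $\alpha_{t-}$ is a detail the paper omits but is consistent with (and slightly more careful than) its argument.
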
 
\begin{proof}
We can directly apply Theorem \ref{thm:eu} to the McKean--Vlasov SDE \eqref{eq:sde} by taking  
\begin{align}\label{eq:coefficients:ms}
b_t(x,\mu)=f_t(x,\alpha_t, \mu),  \, \, \sigma_t(x,\mu)=g_t(x,\alpha_t, \mu), \, \, \gamma_t(x,\mu, z)= h_t(x,\alpha_t,\mu, z)
\end{align} 
almost surely for any $t\in[0,T]$. One can easily verify that Assumptions A--\ref{as:initial} to A--\ref{as:coercivity:p0} follow from  Assumptions $\bar{A}$--\ref{ast:initial:ms} to $\bar{A}$--\ref{ast:coercivity:ms:p0} with $M_t=\bar{M}_t=1$ for any $t\in[0,T]$. 
\end{proof}

%\textcolor{red}{
Let $w^1,\ldots,w^N$, $N\in\mathbb{N}$, be independent and identically distributed Wiener processes.
%} 
For the propagation of chaos, we consider the following non-interacting particle system,
\begin{align} \label{eq:non-interacting:ms}
y_t^{i}=y_0^{i} +\int_{0}^t f_s(y_s^{i}, \alpha_s, L_{y_s^i}) ds + \int_{0}^t g_s(y_s^{i},\alpha_s, L_{y_s^i}) dw_s^i + \int_0^t \int_Z h_s(y_{s}^{i},\alpha_s, L_{y_{s}^i}, z) \tilde{n}_p^i(ds,dz)
\end{align}
and the associated interacting particle system, 
\begin{align} \label{eq:interacting:ms}
y_t^{i,N}=y_0^{i} +\int_{0}^t f_s(y_s^{i, N},\alpha_s, \mu_s^{y,N}) ds + \int_{0}^t g_s(y_s^{i, N}, \alpha_s, \mu_s^{y,N}) dw_s^i + \int_0^t \int_Z h_s(y_{s}^{i, N},\alpha_s, \mu_{s}^{y,N}, z) \tilde{n}_p^i(ds,dz)
\end{align}
almost surely, where  $\mu_t^{y,N}$ is an empirical measure given by
\begin{align*}
\mu_t^{y,N}(\cdot)= \frac{1}{N} \sum_{i=1}^N \delta_{y_t^{i,N}}(\cdot)
\end{align*}
for any $t\in [0,T]$ and $n, N\in \mathbb{N}$. The proof of the result given below follows from Proposition \ref{prop:propagation}.
\begin{prop}[\textbf{Propagation of Chaos}]
If Assumptions $\bar{A}$--\ref{ast:initial:ms} to $\bar{A}$--\ref{ast:coercivity:ms:p0} are satisfied with $\bar{p}_0>4$,  then the interacting particle system \eqref{eq:interacting:ms} is wellposed and converges to the non-interacting particle system \eqref{eq:non-interacting:ms} and % the rate of convergence is given by 
\[ 
\sup_{i\in\{1,\ldots,N\}}\sup_{t\in[0,T]}\mathbb{E}|y^i_t-y^{i,N}_t|^2\leq K 
\begin{cases}
N^{-1/2}, &  \mbox{ if }  d<4, \\
N^{-1/2} \ln(N), &  \mbox{ if } d=4, \\
N^{-2/d}, &  \mbox{ if } d>4. 
\end{cases}
\]
for any $N\in\mathbb{N}$, where the constant $K>0$ does not depend on $d$ and $N$. 
\end{prop}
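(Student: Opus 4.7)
The strategy is to reduce the McKean--Vlasov SDEwMS \eqref{eq:sdems} to the general random-coefficient McKean--Vlasov SDE \eqref{eq:sde} of Section 1, and then invoke Proposition \ref{prop:propagation} directly. Concretely, following equation \eqref{eq:coefficients:ms} of the proof of Theorem \ref{thm:eu:ms}, I would set
\begin{align*}
b_t(x,\mu) := f_t(x,\alpha_t,\mu), \quad \sigma_t(x,\mu) := g_t(x,\alpha_t,\mu), \quad \gamma_t(x,\mu,z) := h_t(x,\alpha_t,\mu,z)
\end{align*}
almost surely for any $t \in [0,T]$. Since the Markov chain $\alpha$ is $\mathscr{F}_t$-adapted and c\`adl\`ag, its left-continuous version is $\mathscr{P}$-predictable and coincides with $\alpha$ Lebesgue-a.e.; hence the composed coefficients are $\mathscr{P}\otimes\mathscr{B}(\mathbb{R}^d)\otimes\mathscr{B}(\mathcal{P}_2(\mathbb{R}^d))$-measurable (and, for $\gamma$, additionally $\mathscr{Z}$-measurable). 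Under this identification, the non-interacting and interacting particle systems \eqref{eq:non-interacting:ms} and \eqref{eq:interacting:ms} agree almost surely with the systems \eqref{eq:non-interacting} and \eqref{eq:interacting} driven by the independent noises $\{(w^i,\tilde n_p^i)\}_{i=1}^N$, with the common Markov chain $\alpha$ absorbed into the random coefficient structure.

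Next I would verify that Assumptions $A$--\ref{as:initial} through $A$--\ref{as:coercivity:p0} are implied by Assumptions $\bar A$--\ref{ast:initial:ms} through $\bar A$--\ref{ast:coercivity:ms:p0} with $p_0 = \bar p_0 > 4$, upon choosing $M_t = \bar M_t \equiv 1$ (a constant $\mathscr{F}_0$-measurable random variable, trivially with finite expectation and with $\sup_t \mathbb{E}\bar M_t < \infty$). Each bar-hypothesis is uniform in the state variable $i_0 \in \mathcal{S}$ of the finite chain, so evaluating at $i_0 = \alpha_t(\omega)$ preserves the same inequalities with the same constant $L$ almost surely. The continuity Assumption $A$--\ref{as:continuity} transfers because $f_t(\cdot,i_0,\cdot)$ is continuous in $(x,\mu)$ for each fixed $i_0 \in \mathcal{S}$, and there are only finitely many such $i_0$.

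With the assumptions of Proposition \ref{prop:propagation} now satisfied for the reformulated coefficients, well-posedness of the interacting system \eqref{eq:interacting:ms} and the claimed rate of convergence -- $N^{-1/2}$ when $d<4$, $N^{-1/2}\ln N$ when $d=4$, and $N^{-2/d}$ when $d>4$ -- follow verbatim, with the constant $K$ independent of $d$ and $N$ as required.

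The main (and essentially only) obstacle is the measurability check at the reduction step: verifying that plugging the random index $\alpha_t$ into coefficients that are a priori only \emph{deterministic} functions of $(t,x,i_0,\mu)$ yields genuinely $\mathscr{P}$-measurable (in particular, predictable) random fields compatible with the stochastic integration theory used in Section 1. Once this is settled via the c\`adl\`ag-adapted-to-predictable passage, the remainder is pure bookkeeping and introduces no new analytical content beyond Proposition \ref{prop:propagation}.
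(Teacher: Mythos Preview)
Your proposal is correct and follows exactly the paper's approach: the paper simply states that the result ``follows from Proposition \ref{prop:propagation}'' via the coefficient identification \eqref{eq:coefficients:ms} already established in the proof of Theorem \ref{thm:eu:ms}. If anything, you supply more detail than the paper does, particularly on the predictability check for the composed coefficients $f_t(x,\alpha_t,\mu)$, which the paper takes for granted.
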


Before introducing the tamed Euler scheme, we make the following additional assumptions.
%----------------
\begin{asAt} \label{ast:coercivity:ms:rate}
There exists a constant $L>0$  such that
\begin{align*}
2|x&|^{\bar{p}_0-2} x f_t(x, i_0, \mu) +(\bar{p}_0-1) |x|^{\bar{p}_0-2} |g_t(x,i_0, \mu)|^2 
\\
& + (\bar{p}_0-1)2^{\bar{p}_0-3} \int_Z \big\{  |h_t(x,i_0,\mu, z)|^2|x|^{\bar{p}_0-2} + |h_t(x,i_0,\mu, z)|^{\bar{p}_0} \big\}  \nu(dz) \leq  L\big\{1+|x|^{\bar{p}_0}+\mathcal{W}_2^{\bar{p}_0}(\mu,\delta_0) \big\}
\end{align*}
for any $t\in[0,T]$, $i_0 \in \mathcal{S}$, $x \in\mathbb{R}^d$ and $\mu  \in \mathcal{P}_2(\mathbb{R}^d)$. 
\end{asAt}
%----------------
\begin{asAt} \label{ast:monotonicity:rate:ms}
There exist constants $L>0$ and $\eta>1$ such that
\begin{align*}
2(x-\bar{x})&(f_t(x,i_0,\mu) -f_t(\bar{x},i_0,\bar{\mu}))+\eta |g_t(x,i_0,\mu)-g_t(\bar{x}, i_0,\bar{\mu})|^2 
\\
& +\eta \int_Z |h_t(x,i_0,\mu, z)-h_t(\bar{x},i_0,\bar{\mu}, z)|^2 \nu(dz) \leq L\big\{|x-\bar{x}|^2 +\mathcal{W}_2^2 (\mu, \bar{\mu})\big\}
\end{align*}
for any $t\in[0,T]$, $i_0 \in \mathcal{S}$, $x,\bar{x} \in\mathbb{R}^d$ and $\mu, \bar{\mu} \in \mathcal{P}_2(\mathbb{R}^d)$. 
\end{asAt}
%----------------
\begin{asAt} \label{ast:poly:Lips:b:ms}
There exist  constants $L>0$ and $\chi>0$ such that
\begin{align*}
|f_t(x, i_0, \mu)-f_t(\bar{x}, i_0, \bar{\mu})|^2 \leq L\big\{ (1+|x|+|\bar{x}|)^{\chi} |x-\bar{x}|^2+\mathcal{W}_2^2(\mu,\bar{\mu})\big\}
\end{align*}
for any $t\in[0,T]$, $i_0 \in \mathcal{S}$, $x,\bar{x}\in\mathbb{R}^d$ and $\mu, \bar{\mu}\in\mathcal{P}_2(\mathbb{R}^d)$. 
\end{asAt}
%---------
\begin{asAt} \label{ast:coercivity:scheme:ms}
There exists a constant  $L>0$ such that
\begin{align*}
\int_Z |h_t(x,i_0,\mu,z)|^{q}  \nu(dz) \leq L\big\{ 1+|x|^{\chi/2+ q}+\mathcal{W}_2^{q}(\mu,\delta_0) \big\}, \quad 2\leq q \leq \bar{p}_0,
\end{align*}
for any $t\in[0,T]$, $x\in\mathbb{R}^d$,  $i_0 \in \mathcal{S}$ and $\mu \in \mathcal{P}_2(\mathbb{R}^d)$. 
\end{asAt}
%-----------------------
\begin{asAt} \label{ast:holder:time:ms}
There exists a constant $L>0$ such that
\begin{align*}
|f_t(x,i_0,\mu) - & f_s(x,i_0,\mu)|^2  +|g_t(x,i_0,\mu)-g_s(x,i_0,\mu)|^2\\
& + \int_Z |h_t(x,i_0,\mu,z)-h_s(x,i_0,\mu, z)|^2  \nu(dz)  \leq L |t-s|  
\end{align*}
for any $t, s \in[0,T]$, $i_0 \in \mathcal{S}$, $x\in\mathbb{R}^d$ and $\mu\in\mathcal{P}_2(\mathbb{R}^d)$. 
\end{asAt}
%--------------------
\begin{asAt} \label{ast:bounded:ms}
There exists a constant $L>0$  such that
\begin{align*}
\sup_{t\in[0,T]}|f_t(0,i_0,\delta_0)|^2 \vee \sup_{t\in[0,T]} |g_t(0,i_0,\delta_0)|^2  \vee \sup_{t\in[0,T]} \int_Z |h_t(0,i_0,\delta_0, z)|^2\nu(dz) < L
\end{align*}
for any $t \in[0,T]$ and $i_0 \in \mathcal{S}$. 
\end{asAt}
%---------------------------------------
\begin{rem} \label{rem:poly:lips:sig:gam:ms}
Due to Assumptions $\bar{A}$--\ref{ast:monotonicity:rate:ms} and $\bar{A}$--\ref{ast:poly:Lips:b:ms}, there exists a constant $L>0$ such that
\begin{align*}
|g_t(x,i_0,\mu)-g_t(\bar{x}, i_0,\bar{\mu})|^2 +&\int_Z |h_t(x,i_0,\mu,z)-h_t(\bar{x},i_0,\bar{\mu},z)|^2 \nu(dz) \\
&\leq L\big\{(1+|x|+|\bar{x}|)^{\chi/2}|x-\bar{x}|^2 +\mathcal{W}_2^2 (\mu, \bar{\mu})\big\}
\end{align*}
for any $t\in[0,T]$, $i_0 \in \mathcal{S}$, $x,\bar{x}\in\mathbb{R}^d$ and $\mu, \bar{\mu}\in\mathcal{P}_2(\mathbb{R}^d)$. 
\end{rem} 
%---------------------------------
\begin{rem} \label{rem:growth:ms}
Due to Assumption $\bar{A}$--\ref{ast:bounded:ms} and Remark \ref{rem:poly:lips:sig:gam:ms}, there exists a constant $L>0$ such that
\begin{align*}
|f_t(x,i_0,\mu)| & \leq L \big\{1+|x|^{\chi/2+1} + \mathcal{W}_2(\mu,\delta_0)\big\},
\\
|g_t(x,i_0,\mu)| & \leq L \big\{1+|x|^{\chi/4+1} + \mathcal{W}_2(\mu,\delta_0)\big\},
\\
\int_Z |h_t(x,i_0,\mu,z)|^2 \nu(dz) & \leq L \big\{1+|x|^{\chi/2+2}+ \mathcal{W}_2^2(\mu,\delta_0)\big\}
\end{align*}
 for any $t\in[0,T]$, $i_0 \in \mathcal{S}$, $x\in\mathbb{R}^d$ and $\mu \in\mathcal{P}_2(\mathbb{R}^d)$. 
\end{rem}
We now introduce a tamed Euler scheme for the McKean--Vlasov SDEwMS \eqref{eq:sdems} as follows,
\begin{align} \label{eq:scheme:ms}
y_t^{i,N,n}=y_0^{i}& +\int_{0}^t \frac{f_{\kappa_n(s)}(y_{\kappa_n(s)}^{i, N,n},\alpha_{\kappa_n(s)}, \mu_{\kappa_n(s)}^{y,N,n})}{1+n^{-1/2}|y_{\kappa_n(s)}^{i, N,n}|^\chi} ds + \int_{0}^t \frac{g_{\kappa_n(s)}(y_{\kappa_n(s)}^{i, N,n}, \alpha_{\kappa_n(s)}, \mu_{\kappa_n(s)}^{y,N,n})}{1+n^{-1/2}|y_{\kappa_n(s)}^{i, N,n}|^\chi} dw_s^i  \notag \\
&+ \int_0^t \int_Z \frac{h_{\kappa_n(s)}(y_{\kappa_n(s)}^{i, N,n},\alpha_{\kappa_n(s)}, \mu_{\kappa_n(s)}^{y,N,n}, z)}{1+n^{-1/2}|y_{\kappa_n(s)}^{i, N,n}|^\chi} \tilde{n}_p^i(ds,dz),
\end{align}
almost surely for any $t\in [0,T]$, $n,N\in\mathbb{N}$, where  
\begin{align*}
\mu_t^{y,N,n}(\cdot)= \frac{1}{N} \sum_{i=1}^N \delta_{y_t^{i,N,n}}(\cdot)
\end{align*}
  is an empirical measure.

\begin{thm}[\textbf{Tamed Euler Scheme}]
Let Assumptions $\bar{A}$--\ref{ast:initial:ms}, $\bar{A}$--\ref{ast:coercivity:ms} and  $\bar{A}$--\ref{ast:coercivity:ms:rate} to $\bar{A}$--\ref{ast:bounded:ms}  be satisfied. Then, the tamed Euler scheme  \eqref{eq:scheme:ms} converges in mean-square sense to the true solution of the interacting particle system  \eqref{eq:interacting:ms} connected with the McKean--Vlasov SDEwMS \eqref{eq:sdems}, 
%{\color{blue} 
and for any $\epsilon>0$ such that $\bar{p}_0 \ge \chi (2+\epsilon)/\epsilon$, we have
%}
%the rate of convergence is arbitrarily close to $1/2$, \textit{i.e.}, 
\begin{align*}
\sup_{i\in\{1,\ldots,N\}} \sup_{t\in[0,T]}\mathbb{E}|y_t^{i,N}-y_t^{i,N,n}|^2 \leq K n^{-2/(2+\epsilon)}
\end{align*}
for any $n, N\in\mathbb{N}$, where the constant $K>0$ does not depend on $n$ and $N$. 
\end{thm}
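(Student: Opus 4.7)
The plan is to view the McKean--Vlasov SDEwMS \eqref{eq:sdems} as an instance of the McKean--Vlasov SDE \eqref{eq:sde} with $\alpha$-dependent coefficients
\[
b_t(x,\mu):=f_t(x,\alpha_t,\mu), \quad \sigma_t(x,\mu):=g_t(x,\alpha_t,\mu), \quad \gamma_t(x,\mu,z):=h_t(x,\alpha_t,\mu,z),
\]
and to identify the tamed coefficients appearing in scheme \eqref{eq:scheme:ms} with
\[
b_t^n(x,\mu):=\frac{f_t(x,\alpha_t,\mu)}{1+n^{-1/2}|x|^\chi}, \quad \sigma_t^n(x,\mu):=\frac{g_t(x,\alpha_t,\mu)}{1+n^{-1/2}|x|^\chi}, \quad \gamma_t^n(x,\mu,z):=\frac{h_t(x,\alpha_t,\mu,z)}{1+n^{-1/2}|x|^\chi}.
\]
The conclusion will then follow by a direct application of Theorem \ref{thm:order-12}, provided the $\bar A$--assumptions together with this explicit taming imply the corresponding $A$-- and $B$--assumptions of Section~4.

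For the true coefficients, Assumptions $A$--\ref{as:initial} through $A$--\ref{as:bounded} transfer from $\bar A$--\ref{ast:initial:ms}--$\bar A$--\ref{ast:bounded:ms} with constants $M_t=\bar{M}_t\equiv L$, since the bounds in the latter are uniform in $i_0\in\mathcal{S}$ and $\alpha$ takes values in $\mathcal{S}$ pathwise. For the tamed coefficients, Assumption $B$--\ref{asb:coercivity:scheme} (with $\bar{M}^n_t\equiv 1$) follows by the elementary inequality $|x+\theta\gamma_t^n|^{p_0-2}\le 2^{p_0-3}(|x|^{p_0-2}+|\gamma_t^n|^{p_0-2})$ together with $|\gamma_t^n|\le|h_t|$ and $|\sigma_t^n|\le|g_t|$, reducing the left-hand side to that of $\bar A$--\ref{ast:coercivity:ms:rate} modulo a taming correction bounded in modulus by $2n^{-1/2}|x|^{p_0-1+\chi}|f_t|$, which is absorbed by Remark \ref{rem:growth:ms} and Young's inequality. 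Assumption $B$--\ref{asb:growth:n} is checked by distinguishing the regimes $n^{-1/2}|x|^\chi\le 1$ and $n^{-1/2}|x|^\chi>1$: the polynomial bounds come from Remark \ref{rem:growth:ms} and $\bar A$--\ref{ast:coercivity:scheme:ms}, while the $n^{1/k}$-type bounds exploit $1+n^{-1/2}|x|^\chi\ge n^{-1/2}|x|^\chi$, giving for instance $|b_t^n|\le n^{1/2}|x|^{-\chi}|f_t|\le Ln^{1/2}|x|^{1-\chi/2}$, and analogously for $\sigma_t^n$ and $\gamma_t^n$. Finally, Assumption $B$--\ref{asb:diff:rate} is immediate by construction: $|b_t(x,\mu)-b_t^n(x,\mu)|\le n^{-1/2}|x|^\chi|f_t(x,\alpha_t,\mu)|$, so H\"older's inequality with exponents $(2+\epsilon)/\epsilon$ and $(2+\epsilon)/2$, combined with the moment bound of Lemma \ref{lem:scheme:mb} and the hypothesis $\bar p_0\ge\chi(2+\epsilon)/\epsilon$, yields the rate $n^{-2/(2+\epsilon)}$; the analogous estimates hold for $\sigma$ and $\gamma$.

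The main obstacle is Assumption $A$--\ref{as:holder:time}, which demands a pathwise H\"older-in-time bound and hence fails along trajectories where $\alpha$ jumps on intervals of length $1/n$. Inspecting the proof of Theorem \ref{thm:order-12}, this hypothesis enters only through integrated quantities such as $K\mathbb{E}\int_0^T|b_s(x,\mu)-b_{\kappa_n(s)}(x,\mu)|^2\,ds$. Splitting
\[
|f_s(x,\alpha_s,\mu)-f_{\kappa_n(s)}(x,\alpha_{\kappa_n(s)},\mu)|^2 \le 2|f_s(x,\alpha_s,\mu)-f_{\kappa_n(s)}(x,\alpha_s,\mu)|^2+2|f_{\kappa_n(s)}(x,\alpha_s,\mu)-f_{\kappa_n(s)}(x,\alpha_{\kappa_n(s)},\mu)|^2,
\]
the first summand is pointwise $O(1/n)$ by $\bar A$--\ref{ast:holder:time:ms}, while the second vanishes on $\{\alpha_s=\alpha_{\kappa_n(s)}\}$ and is controlled elsewhere by the polynomial growth from Remark \ref{rem:growth:ms}. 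Since \eqref{eq:MProb} gives $\mathbb{P}(\alpha_s\ne\alpha_{\kappa_n(s)})\le C|s-\kappa_n(s)|=O(1/n)$, a further application of H\"older's inequality together with the moment bound of Lemma \ref{lem:scheme:mb} shows that the integrated error retains the required $O(1/n)$ rate. With this modification, the proof of Theorem \ref{thm:order-12} carries through verbatim and delivers the claimed rate $n^{-2/(2+\epsilon)}$.
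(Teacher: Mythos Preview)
Your overall plan—cast the switching SDE as an instance of \eqref{eq:sde} with random coefficients, identify explicit tamed versions, then invoke Theorem~\ref{thm:order-12}—is exactly what the paper does. The observation that Assumption $A$--\ref{as:holder:time} fails \emph{pathwise} whenever $\alpha$ jumps, and that this can be repaired in the integrated bound $\mathbb{E}\int_0^T|b_s-b_{\kappa_n(s)}|^2\,ds$ via $\mathbb{P}(\alpha_s\ne\alpha_{\kappa_n(s)})=O(1/n)$, is sharp; the paper is less careful here, simply asserting that $A$--\ref{as:holder:time} follows from $\bar A$--\ref{ast:holder:time:ms} and folding the Markov–chain error into the verification of $B$--\ref{asb:diff:rate} instead. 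Both routes work.

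There is, however, a genuine gap in your verification of Assumption $B$--\ref{asb:coercivity:scheme}. You bound the tamed diffusion and jump terms above by their untamed counterparts via $|\sigma_t^n|\le|g_t|$ and $|\gamma_t^n|\le|h_t|$, and then write the tamed drift as $2|x|^{\bar p_0-2}xf_t$ minus a ``taming correction'' of order $n^{-1/2}|x|^{\bar p_0-1+\chi}|f_t|$. The problem is that this correction cannot be absorbed into $L(1+|x|^{\bar p_0}+\mathcal{W}_2^{\bar p_0})$: once you plug in $|f_t|\le L(1+|x|^{\chi/2+1}+\mathcal{W}_2)$ from Remark~\ref{rem:growth:ms} you obtain a term $n^{-1/2}|x|^{\bar p_0+3\chi/2}$, and no Young-type splitting brings this back to power $\bar p_0$ uniformly in $n$ and $x$. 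The underlying reason is structural: in a coercivity estimate the (possibly very negative) drift balances the positive diffusion and jump contributions, and if you weaken only the drift by a taming factor while leaving the others untamed, the balance is lost. The paper's remedy is to factor the \emph{common} denominator $1/(1+n^{-1/2}|x|^\chi)$ out of all three terms simultaneously: every term carries at least one such factor, and pulling out exactly one leaves precisely the left-hand side of $\bar A$--\ref{ast:coercivity:ms:rate} inside the bracket, which is then bounded by $L(1+|x|^{\bar p_0}+\mathcal{W}_2^{\bar p_0})$ and hence so is $F_1$ after multiplying back by $1/(1+n^{-1/2}|x|^\chi)\le 1$.

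A minor point: your $B$--\ref{asb:diff:rate} bound is actually of order $n^{-1}$ (not $n^{-2/(2+\epsilon)}$), since $|b-b^n|^2\le n^{-1}|x|^{2\chi}|f|^2$ and the expectation is controlled by the moment bound of Lemma~\ref{lem:scheme:mb} directly, with no H\"older step; the hypothesis $\bar p_0\ge\chi(2+\epsilon)/\epsilon$ is used inside the proof of Theorem~\ref{thm:order-12} (the estimate of $E_1$), not in verifying $B$--\ref{asb:diff:rate}.
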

\begin{proof}
We prove this result by applying Theorem \ref{thm:order-12} on the interacting particle system \eqref{eq:interacting} associated to the McKean--Vlasov SDE \eqref{eq:sde} with coefficients given in equation \eqref{eq:coefficients:ms} and on its tamed Euler scheme \eqref{eq:scheme} with coefficients given by
\begin{align} \label{eq:coefficients:tamed:ms}
b_t^n(x,\mu)=\frac{f_t(x,\alpha_t, \mu)}{1+n^{-1/2}|x|^{\chi}},  \, \, \sigma_t^n(x,\mu)=\frac{g_t(x,\alpha_t, \mu)}{1+n^{-1/2}|x|^{\chi}}, \, \, \gamma_t^n(x,\mu, z)= \frac{h_t(x,\alpha_t,\mu, z)}{1+n^{-1/2}|x|^{\chi}}
\end{align}
almost surely for any $t\in [0,T]$, $x\in \mathbb{R}^d$, $\mu \in \mathcal{P}_2(\mathbb{R}^d)$, $z \in Z$ and $n\in \mathbb{N}$.
For this, we show that  assumptions of Theorem \ref{thm:order-12} are satisfied under the assumptions of this theorem.  

Notice that Assumptions $A$--\ref{ast:initial:ms}, $A$--\ref{as:coercivity} and $A$--\ref{as:coercivity:p0} have already been verified in the proof of Theorem \ref{thm:eu:ms}. 
Also, Assumptions $A$--\ref{as:monotonicity:rate}, $A$--\ref{as:poly:Lips:b}, $A$--\ref{as:holder:time} and $A$--\ref{as:bounded} follow from Assumptions $\bar{A}$--\ref{ast:monotonicity:rate:ms}, $\bar{A}$--\ref{ast:poly:Lips:b:ms}, $\bar{A}$--\ref{ast:holder:time:ms} and $\bar{A}$--\ref{ast:bounded:ms} respectively. 
Moreover, Assumption $B$--\ref{asb:coercivity:scheme} follows with $p_0=\bar{p}_0$ from Assumptions $\bar{A}$--\ref{ast:coercivity:ms:p0}.  Indeed, 
\begin{align}
F_1 := & 2|x|^{\bar{p}_0-2}  x b_t^n(x, \mu) +(\bar{p}_0-1) |x|^{\bar{p}_0-2} |\sigma_t^n(x,\mu)|^2  \notag
\\
& + 2(\bar{p}_0-1) \int_Z |\gamma_t^n(x,\mu,z)|^2 \int_0^1 (1-\theta)|x+\theta\gamma_t^n(x,\mu,z)|^{\bar{p}_0-2} d\theta  \nu(dz)  \notag
\\
 = & 2|x|^{\bar{p}_0-2} x\frac{f_t(x,\alpha_t, \mu)}{1+n^{-1/2}|x|^{\chi}}+(\bar{p}_0-1) |x|^{\bar{p}_0-2} \Big| \frac{g_t(x,\alpha_t, \mu)}{1+n^{-1/2}|x|^{\chi}} \Big|^2  \notag
\\
& + 2(\bar{p}_0-1) \int_Z \Big|\frac{h_t(x,\alpha_t,\mu, z)}{1+n^{-1/2}|x|^{\chi}}\Big|^2 \int_0^1 (1-\theta)\Big|x+\theta \frac{h_t(x,\alpha_t,\mu, z)}{1+n^{-1/2}|x|^{\chi}}\Big|^{\bar{p}_0-2} d\theta  \nu(dz)  \notag
\\
\leq & \frac{1}{1+n^{-1/2}|x|^{\chi}} \big\{ 2|x|^{\bar{p}_0-2} x f_t(x,\alpha_t, \mu) +(\bar{p}_0-1) |x|^{\bar{p}_0-2} |g_t(x,\alpha_t, \mu)|^2 \big\}  \notag
\\
& + (\bar{p}_0-1)2^{\bar{p}_0-3} \int_Z \frac{|h_t(x,\alpha_t,\mu, z)|^2}{(1+n^{-1/2}|x|^{\chi})^2} \Big\{ |x|^{\bar{p}_0-2}+ \frac{|h_t(x,\alpha_t,\mu, z)|^{\bar{p}_0-2}}{(1+n^{-1/2}|x|^{\chi})^{\bar{p}_0-2}} \Big\}  \nu(dz)  \notag
\\
\leq & \frac{1}{1+n^{-1/2}|x|^{\chi}} \big\{ 2|x|^{\bar{p}_0-2} x f_t(x,\alpha_t, \mu) +(\bar{p}_0-1) |x|^{\bar{p}_0-2} |g_t(x,\alpha_t, \mu)|^2 \big\}  \notag
\\
& + (\bar{p}_0-1)2^{\bar{p}_0-3} \int_Z \Big\{  \frac{|h_t(x,\alpha_t,\mu, z)|^2|x|^{\bar{p}_0-2}}{(1+n^{-1/2}|x|^{\chi})^2} + \frac{|h_t(x,\alpha_t,\mu, z)|^{\bar{p}_0}}{(1+n^{-1/2}|x|^{\chi})^{\bar{p}_0}} \Big\}  \nu(dz) \notag
\\
\leq & \frac{1}{1+n^{-1/2}|x|^{\chi}} \Big\{ 2|x|^{\bar{p}_0-2} x f_t(x,\alpha_t, \mu) +(\bar{p}_0-1) |x|^{\bar{p}_0-2} |g_t(x,\alpha_t, \mu)|^2   \notag
\\
& + (\bar{p}_0-1)2^{\bar{p}_0-3} \int_Z \big\{  |h_t(x,\alpha_t,\mu, z)|^2|x|^{\bar{p}_0-2} + |h_t(x,\alpha_t,\mu, z)|^{\bar{p}_0} \big\}  \nu(dz) \Big\}, \notag
\end{align}
which on the application of  Assumptions $\bar{A}$--\ref{ast:coercivity:ms:p0}  yields
\begin{align*}
F_1 \leq  &L(1+|x|^{\bar{p}_0}+\mathcal{W}_2^{\bar{p}_0}(\mu,\delta_0))
\end{align*}
for any $t\in[0,T]$, $x\in\mathbb{R}^d$, $n\in \mathbb{N}$ and $\mu\in\mathcal{P}_2(\mathbb{R}^d)$.

We now proceed to verify Assumption $B$--\ref{asb:growth:n}. 
By using Remark \ref{rem:growth:ms}, 
\begin{align*}
|b_t^n&(x,\mu)|^2 = \Big|\frac{f_t(x,\alpha_t, \mu)}{1+n^{-1/2}|x|^\chi}\Big|^2\leq \frac{K \{1+|x|^{\chi+2}+ \mathcal{W}_2^2(\mu, \delta_0)  \}}{1+n^{-1/2}|x|^\chi} \\
& \leq Kn^{1/2} 
\Big\{ \frac{1+|x|^{\chi+2}}{n^{1/2}+|x|^\chi}   +  \mathcal{W}_2^2(\mu, \delta_0) \Big\} \leq Kn^{1/2}\{ 1+|x|^2+\mathcal{W}_2^2(\mu, \delta_0) \},
\end{align*}
which implies
\[
|b_t^n(x,\mu)| \leq K n^{1/4} \{ 1+|x|+\mathcal{W}_2(\mu, \delta_0)\}
\]
for any $t\in [0,T]$, $x\in \mathbb{R}^d$, $\mu \in \mathcal{P}_2(\mathbb{R}^d)$ and $n\in \mathbb{N}$. Similarly,
\begin{align*}
|\sigma_t^n&(x,\mu)|^{4} = \Big|\frac{g_t(x,\alpha_t, \mu)}{1+n^{-1/2}|x|^{\chi}}\Big|^{4}\leq \frac{K \{1+|x|^{\chi+4}+ \mathcal{W}_2^{4}(\mu, \delta_0)  \}}{1+n^{-1/2}|x|^{\chi}} \\
& \leq Kn^{1/2} 
\Big\{ \frac{1+|x|^{\chi+4}}{n^{1/2}+|x|^{\chi}}   +  \mathcal{W}_2^{4}(\mu, \delta_0) \Big\} \leq Kn^{1/2}\{ 1+|x|^{4}+\mathcal{W}_2^{4}(\mu, \delta_0) \},
\end{align*}
which implies
\[
|\sigma_t^n(x,\mu)| \leq K n^{1/8} \{ 1+|x|+\mathcal{W}_2(\mu, \delta_0)\}
\]
for any $t\in [0,T]$, $x\in \mathbb{R}^d$, $\mu \in \mathcal{P}_2(\mathbb{R}^d)$ and $n\in \mathbb{N}$.  Further,
\begin{align*}
\Big(\int_Z|\gamma_t^n(x, &\mu,z)|^2\nu(dz)\Big)^2 = K \Big(\int_Z\frac{|h_t(x,\alpha_t, \mu,z)|^2}{1+n^{-1/2}|x|^{\chi}}\nu(dz)\Big)^2 \\
&\leq K\frac{ \{1+|x|^{\chi+4}+ \mathcal{W}_2^4(\mu, \delta_0)  \}}{1+n^{-1/2}|x|^{\chi}}  \leq Kn^{1/2} 
\Big\{ \frac{1+|x|^{\chi+4}}{n^{1/2}+|x|^{\chi}}   +  \mathcal{W}_2^{4}(\mu, \delta_0) \Big\} \\
&\leq Kn^{1/2}\{ 1+|x|^{4}+\mathcal{W}_2^{4}(\mu, \delta_0) \},
\end{align*}
which implies
\[
\int_Z|\gamma_t^n(x,\mu,z)|^2\nu(dz)\leq K n^{1/4} \{ 1+|x|^2+\mathcal{W}_2^2(\mu, \delta_0)\}
\]
for any $t\in [0,T]$, $x\in \mathbb{R}^d$, $\mu \in \mathcal{P}_2(\mathbb{R}^d)$ and $n\in \mathbb{N}$. 
Moreover,  using Assumption~$\bar{A}$--\ref{ast:coercivity:scheme:ms} for $2\leq q \leq \bar{p}_0$, 
\begin{align*}
\Big(\int_Z|\gamma_t^n&(x,\mu,z)|^{q}\nu(dz)\Big)^2 = K \Big(\int_Z\Big|\frac{h_t(x,\alpha_t, \mu,z)}{1+n^{-1/2}|x|^{\chi}}\Big|^{q} \nu (dz)\Big)^2\leq \frac{K \{1+|x|^{ \chi+2q}+ \mathcal{W}_2^{2q}(\mu, \delta_0)  \}}{1+n^{-1/2}|x|^{\chi}}
 \\
& \leq Kn^{1/2} \Big\{ \frac{1+|x|^{ \chi+2q}}{n^{1/2}+|x|^{\chi}}   +  \mathcal{W}_2^{2q}(\mu, \delta_0) \Big\} \leq Kn^{1/2}\{ 1+|x|^{2q}+\mathcal{W}_2^{2q}(\mu, \delta_0) \},
\end{align*}
which implies
\[
\int_Z|\gamma_t^n(x,\mu,z)|^{q}\nu(dz)\leq K n^{1/4} \{ 1+|x|^{q}+\mathcal{W}_2^{q}(\mu, \delta_0)\}
\]
for any $t\in [0,T]$, $x\in \mathbb{R}^d$, $\mu \in \mathcal{P}_2(\mathbb{R}^d)$ and $n\in \mathbb{N}$. 

Thus, on the application of Lemma \ref{lem:scheme:mb}, one obtains 
\begin{align} \label{eq:mb:ms}
\sup_{i\in\{1,\ldots, N\}}\sup_{t\in[0,T]} \mathbb{E}   |y_t^{i, N, n}|^{\bar{p}_0} \leq K,
\end{align}
for any $n, N\in\mathbb{N}$, where $K>0$ does not depend on $n$ and $N$. 

We now verify Assumption B--\ref{asb:diff:rate}. 
From equations \eqref{eq:coefficients:ms} and \eqref{eq:coefficients:tamed:ms}, we have
\begin{align*}
 \mathbb{E} &\int_0^T |b_{\kappa_n(s)}(y_{\kappa_n(s)}^{i, N,n}, \mu_{\kappa_n(s)}^{y,N,n})- b^n_{\kappa_n(s)}(y_{\kappa_n(s)}^{i, N, n}, \mu_{\kappa_n(s)}^{y,N, n})|^2 ds
  \\
 &\leq K \mathbb{E} \int_0^T\Big| f_{\kappa_n(s)}(y_{\kappa_n(s)}^{i, N,n},\alpha_s, \mu_{\kappa_n(s)}^{y,N,n})- f_{\kappa_n(s)}(y_{\kappa_n(s)}^{i, N,n},\alpha_{\kappa_n(s)}, \mu_{\kappa_n(s)}^{y,N,n}) \Big|^2 ds
 \\
 &\qquad+K \mathbb{E} \int_0^T\Big| f_{\kappa_n(s)}(y_{\kappa_n(s)}^{i, N,n},\alpha_{\kappa_n(s)}, \mu_{\kappa_n(s)}^{y,N,n}) -\frac{f_{\kappa_n(s)}(y_{\kappa_n(s)}^{i, N,n},\alpha_{\kappa_n(s)}, \mu_{\kappa_n(s)}^{y,N,n})}{1+n^{-1/2}|y_{\kappa_n(s)}^{i, N,n}|^{\chi}} \Big|^2 ds
 \\
 &\leq  K \mathbb{E} \int_0^T\Big\{ | f_{\kappa_n(s)}(y_{\kappa_n(s)}^{i, N,n},\alpha_s, \mu_{\kappa_n(s)}^{y,N,n})|^2+ |f_{\kappa_n(s)}(y_{\kappa_n(s)}^{i, N,n},\alpha_{\kappa_n(s)}, \mu_{\kappa_n(s)}^{y,N,n})|^2 \Big\} \mathbb{I}_{\{\alpha_{\kappa_n(s)}\neq\alpha_s\}} ds 
 \\
 &\qquad+K \mathbb{E} \int_0^T |f_{\kappa_n(s)}(y_{\kappa_n(s)}^{i, N,n},\alpha_{\kappa_n(s)}, \mu_{\kappa_n(s)}^{y,N,n})|^2 n^{-1} |y_{\kappa_n(s)}^{i, N,n}|^{2 \chi}  ds,
 \end{align*}
 which due to Remark \ref{rem:growth:ms}, equation \eqref{eq:mb:ms} and equation \eqref{eq:MProb} yields, 
  \begin{align*}
   \mathbb{E} \int_0^T |b_{\kappa_n(s)}&(y_{\kappa_n(s)}^{i, N,n}, \mu_{\kappa_n(s)}^{y,N,n})- b^n_{\kappa_n(s)}(y_{\kappa_n(s)}^{i, N, n}, \mu_{\kappa_n(s)}^{y,N, n})|^2 ds
  \\
 & \leq K n^{-1} + Kn^{-1} \mathbb{E} \int_0^T |y_{\kappa_n(s)}^{i, N,n}|^{2\chi} \big\{|y_{\kappa_n(s)}^{i, N,n}|^{\chi+2}+ \mathcal{W}^2_2(\mu_{\kappa_n(s)}^{y,N,n},\delta_0)\big\}ds
 \\
 &\leq  K n^{-1} + Kn^{-1} \mathbb{E} \int_0^T |y_{\kappa_n(s)}^{i, N,n}|^{2 \chi} \Big\{|y_{\kappa_n(s)}^{i, N,n}|^{\chi+2}+ \frac{1}{N}\sum_{i=1}^N |y_{\kappa_n(s)}^{i,N,n}|^2 \Big\}ds 
 \\
 &\leq  K n^{-1} +  Kn^{-1} \sup_{i\in \{1,\ldots, N\}}\sup_{s\in[0,T]}\sup_{n\in \mathbb{N}}  \mathbb{E}\big\{ |y_{\kappa_n(s)}^{i, N,n}|^{3\chi+2}+|y_{\kappa_n(s)}^{i, N,n}|^{2\chi+2}\big\}   \leq K n^{-1}
 \end{align*}
 for any $i\in\{1,\ldots,N\}$ and $n, N\in\mathbb{N}$.
 Similarly, 
\begin{align*}
 \mathbb{E} &\int_0^T |\sigma_{\kappa_n(s)}(y_{\kappa_n(s)}^{i, N,n}, \mu_{\kappa_n(s)}^{y,N,n})- \sigma^n_{\kappa_n(s)}(y_{\kappa_n(s)}^{i, N, n}, \mu_{\kappa_n(s)}^{y,N, n})|^2 ds 
 \\
 &\leq  K n^{-1} + Kn^{-1} \mathbb{E} \int_0^T |y_{\kappa_n(s)}^{i, N,n}|^{2 \chi} \Big\{|y_{\kappa_n(s)}^{i, N,n}|^{\chi/2+2}+ \frac{1}{N}\sum_{i=1}^N |y_{\kappa_n(s)}^{i,N,n}|^2 \Big\}ds 
 \\
 &\leq   K n^{-1} + Kn^{-1} \sup_{i\in \{1,\ldots, N\}}\sup_{s\in[0,T]}\sup_{n\in \mathbb{N}}  \mathbb{E}\big\{ |y_{\kappa_n(s)}^{i, N,n}|^{5\chi+2}+|y_{\kappa_n(s)}^{i, N,n}|^{2\chi+2}\big\} \leq K n^{-1}
 \end{align*}
 for any $i\in\{1,\ldots,N\}$ and $n, N\in\mathbb{N}$.
 Finally,
 \begin{align*}
   \mathbb{E} \int_0^T& \int_Z |\gamma_{\kappa_n(s)}(y_{\kappa_n(s)}^{i, N,n}, \mu_{\kappa_n(s)}^{y,N,n}, z)- \gamma^n_{\kappa_n(s)}(y_{\kappa_n(s)}^{i, N, n}, \mu_{\kappa_n(s)}^{y,N, n}, z)|^2 \nu(dz) ds \\
   & \leq Kn^{-1} + Kn^{-1} \mathbb{E} \int_0^T |y_{\kappa_n(s)}^{i, N,n}|^{2 \chi} \Big\{|y_{\kappa_n(s)}^{i, N,n}|^{\chi/2+2}+ \frac{1}{N}\sum_{i=1}^N |y_{\kappa_n(s)}^{i,N,n}|^2 \Big\}ds 
   \\
 &\leq  Kn^{-1} + Kn^{-1} \sup_{i\in \{1,\ldots, N\}}\sup_{s\in[0,T]}\sup_{n\in \mathbb{N}}  \mathbb{E}\big\{ |y_{\kappa_n(s)}^{i, N,n}|^{5\chi+2}+|y_{\kappa_n(s)}^{i, N,n}|^{2\chi+2}\big\} \leq K n^{-1} 
\end{align*}
 for any $i\in\{1,\ldots,N\}$ and $n, N\in\mathbb{N}$.  This completes the proof. 
\end{proof}

\subsection{McKean--Vlasov SDDE  driven by L\'evy noise}
For simplicity, consider the case of a single delay, but the result can be extended to multiple delays without any difficulty. 
Let $f:[0,T]\times \mathbb{R}^d \times \mathbb{R}^d \times \mathcal{P}_2(\mathbb{R}^d) \times \mathcal{P}_2(\mathbb{R}^d)$,  $g:[0,T]\times \mathbb{R}^d \times \mathbb{R}^d \times \mathcal{P}_2(\mathbb{R}^d) \times \mathcal{P}_2(\mathbb{R}^d)$ and  $h:[0,T]\times \mathbb{R}^d \times \mathbb{R}^d \times \mathcal{P}_2(\mathbb{R}^d) \times \mathcal{P}_2(\mathbb{R}^d) \times Z$ be measurable functions taking values in $\mathbb{R}^d$, $\mathbb{R}^{d \times m}$ and $\mathbb{R}^d$ respectively. 
Let $\tau>0$ be a fixed constant denoting the fixed delay parameter.
Consider the following $d$--dimensional McKean--Vlasov SDDE driven by L\'evy noise,  
\begin{align} \label{eq:sdedelay}
y_t & =y_0+\int^t_0 f_s(y_s, y_{s-\tau}, L_{y_s}, L_{y_{s-\tau}})ds +\int^t_0 g_s(y_s, y_{s-\tau}, L_{y_s}, L_{y_{s-\tau}})dw_s  \notag
\\
& \qquad +\int^t_0 \int_Z h_s(y_s, y_{s-\tau}, L_{y_s}, L_{y_{s-\tau}}, z) \tilde{n}_p(ds,dz)
\end{align} 
almost surely for any $t\in[0,T]$ with  the initial data $\xi_t:=y_{t-\tau}$ for any $t\in [0, \tau]$. 
Without loss of generality, one can assume that $T=M\tau$ for some $M\in\mathbb{N}$ such that the interval $[0,T]$ can be divided into $M$ sub-intervals   $[(j-1)\tau, j\tau]$ for $j=1,\ldots,M$.

In order to apply the results obtained for McKean--Vlasov SDEs with random coefficients in Sections $2$, $3$ and $4$,  one can first re-write the results for interval $[t_0, t_1]$ instead of $[0,T]$, which is a trivial computation, and then use them for McKean--Vlasov SDDEs driven by L\'evy noise by taking $t_0=(j-1)\tau$ and $t_1=j\tau$ for $j=1,\ldots, M$. In this section, we adapt the approach of \cite{kumar2014} from SDDEs to McKean--Vlasov SDDEs, utilising heavily our results from earlier sections.
\begin{rem}
%\textcolor{red}{
When $\tau\equiv 0$, then the McKean--Vlasov SDDE \eqref{eq:sdedelay} reduces to a McKean--Vlasov SDE. Hence, all the results derived in this section also hold for McKean--Vlasov SDEs.
%} 
\end{rem}

 Let $\bar{\bar{p}}_0\geq 2$ be a fixed constant. 
We make the following assumptions. 
 %-----------------
\begin{asAtt} \label{astt:initial:delay}
$\sup_{t\in[0, \tau]}\mathbb{E}|\xi_t|^{\bar{\bar{p}}_0} < \infty$.
\end{asAtt}
%-----------------
\begin{asAtt} \label{astt:coercivity:delay}
There exist constants $L>0$ and $\rho \geq 2 $  such that
\begin{align*}
2  xf_t(x,y,\mu, \mu') +  & |g_t(x,y,\mu, \mu')|^2 +   \int_Z |h_t(x,y,\mu, \mu', z)|^2  \nu(dz)  \leq L\big\{1+|x|^{2}+|y|^{\rho}
\\
& \qquad +\mathcal{W}_2^{2}(\mu,\delta_0)+\mathcal{W}_{2}^{\rho}(\mu',\delta_0) \big\}
\end{align*}
 for any $t\in[0,T]$, $x, y\in\mathbb{R}^d$  and $\mu, \bar{\mu} \in \mathcal{P}_2(\mathbb{R}^d)$. 
\end{asAtt}
%----------------
\begin{asAtt} \label{astt:monotonicity:delay}
There exists a  constant $L>0$  such that
\begin{align*}
2(x-\bar{x}) &(f_t(x,y,\mu, \mu')  -f_t(\bar{x},y,\bar{\mu}, \mu'))  + |g_t(x,y,\mu, \mu')-g_t(\bar{x},y,\bar{\mu}, \mu')|^2 
\\
& +\int_Z |h_t(x,y,\mu, \mu', z)-h_t(\bar{x},y,\bar{\mu}, \mu', z)|^2 \nu(dz)  \leq L\big\{|x-\bar{x}|^2 + \mathcal{W}_2^2(\mu,\bar{\mu}) \big\}
\end{align*}
 for any $t\in[0,T]$, $x, \bar{x}, y,  \in\mathbb{R}^d$ and $\mu, \bar{\mu}, \mu' \in \mathcal{P}_2(\mathbb{R}^d)$. 
\end{asAtt}
%----------------
\begin{asAtt} \label{astt:continuity:delay}
For any   $t\in[0,T]$, $y\in\mathbb{R}^d$ and $\mu'\in\mathcal{P}_2(\mathbb{R}^d)$,    $f_t(x,y,\mu, \mu')$ is a continuous function of $x\in\mathbb{R}^d$ and $\mu\in\mathcal{P}_2(\mathbb{R}^d)$. 
\end{asAtt}
%----------------- 
\begin{asAtt} \label{astt:coercivity:delay:pj}
There exists a constant $L>0$ such that 
\begin{align*}
&2|x|^{p_j-2}  xf_t(x,y,\mu, \mu') +(p_j-1) |x|^{p_j-2} |g_t(x,y,\mu, \mu')|^2 
\\
&\qquad  + 2 (p_j-1) \int_Z |h_t(x,y,\mu, \mu', z)|^2 \int_0^1 (1-\theta)| x+\theta h_t(x,y,\mu, \mu', z)|^{p_j-2} d\theta \nu(dz) 
\\
& \leq L\big\{1+|x|^{p_j}+|y|^{p_{j-1}}+\mathcal{W}_2^{p_j}(\mu,\delta_0)+\mathcal{W}^{p_{j-1}}_{2}(\mu',\delta_0) \big\}
\end{align*}
 for any $t\in[0,T]$, $x, y\in\mathbb{R}^d$  and $\mu, \mu' \in \mathcal{P}_2(\mathbb{R}^d)$ where $p_j=(2/\rho)^j \bar{\bar{p}}_0$ for $j=1,\ldots,M$. 
\end{asAtt}
%---------------------
\begin{thm}[\textbf{Existence, Uniqueness and Moment Bound}] \label{thm:eu:delay}
Let Assumptions $\bar{\bar{A}}$--\ref{astt:initial:delay} (with $\bar{\bar{p}}_0=2$), $\bar{\bar{A}}$--\ref{astt:coercivity:delay}, $\bar{\bar{A}}$--\ref{astt:monotonicity:delay} and $\bar{\bar{A}}$--\ref{astt:continuity:delay} be satisfied. Then, there exists a unique c\`adl\`ag process taking values in $\mathbb{R}^d$ satisfying the  McKean--Vlasov SDE \eqref{eq:sdedelay} such that 
\begin{align*}
\sup_{t\in[0,T]}\mathbb{E}|y_t|^{2} \leq K, 
\end{align*}
where $K:=K( \sup_{t\in[0,\tau]} \mathbb{E}|\xi_t|^{2},m,d,L)$ is a positive constant. 
Moreover, if Assumptions $\bar{\bar{A}}$--\ref{astt:initial:delay} and $\bar{\bar{A}}$--\ref{astt:coercivity:delay:pj}  hold for  $\bar{\bar{p}}_0>2$, then 
\begin{align*}
\sup_{t\in[0,T]}\mathbb{E}|y_t|^{p^*} \leq K,
\end{align*}
where $p^*:=p_M=(2/ \rho)^M \bar{\bar{p}}_0$ and $K:=K( \sup_{t\in[0,\tau]} \mathbb{E}|\xi_t|^{\bar{\bar{p}}_0},m,d,\bar{\bar{p}}_0,L)$ is a positive constant. 
\end{thm}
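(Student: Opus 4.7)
The plan is to apply Theorem~\ref{thm:eu} iteratively on the $M$ subintervals $I_j := [(j-1)\tau, j\tau]$, $j = 1, \ldots, M$. On each $I_j$ and for $s \in I_j$, the delayed time $s - \tau$ lies in $I_{j-1}$, so both $y_{s-\tau}$ and the flow of laws $L_{y_{s-\tau}}$ are determined by the solution constructed at the previous step (or by $\xi$ when $j = 1$) and are $\mathscr{F}_{(j-1)\tau}$-measurable. Restarting the clock and filtration at $(j-1)\tau$ on $I_j$, as flagged in the paragraph preceding the theorem, lets $y_{t-\tau}$ play the role of an ``$\mathscr{F}_0$-measurable'' ingredient, and \eqref{eq:sdedelay} restricted to $I_j$ becomes an instance of the McKean--Vlasov SDE \eqref{eq:sde} with random coefficients
\begin{align*}
b_t(x,\mu) &:= f_t(x, y_{t-\tau}, \mu, L_{y_{t-\tau}}), \quad \sigma_t(x,\mu) := g_t(x, y_{t-\tau}, \mu, L_{y_{t-\tau}}),\\
\gamma_t(x,\mu,z) &:= h_t(x, y_{t-\tau}, \mu, L_{y_{t-\tau}}, z).
\end{align*}

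For the first part I would induct on $j$ with constant target moment $2$. The monotonicity A--\ref{as:monotonicity} and continuity A--\ref{as:continuity} transfer directly from $\bar{\bar A}$--\ref{astt:monotonicity:delay} and $\bar{\bar A}$--\ref{astt:continuity:delay}, while $\bar{\bar A}$--\ref{astt:coercivity:delay} yields A--\ref{as:coercivity} with $M_t := L\{1 + |y_{t-\tau}|^\rho + \mathcal{W}_2^\rho(L_{y_{t-\tau}}, \delta_0)\}$. By Jensen's inequality, $\mathcal{W}_2^\rho(L_{y_{t-\tau}}, \delta_0) = (\mathbb{E}|y_{t-\tau}|^2)^{\rho/2} \leq \mathbb{E}|y_{t-\tau}|^\rho$, so $\sup_{t\in I_j}\mathbb{E} M_t < \infty$ as soon as $y$ has uniformly bounded $\rho$-moment on $I_{j-1}$, provided on $I_1$ by $\bar{\bar A}$--\ref{astt:initial:delay} (with sufficient initial moments of $\xi$) and on later intervals by the inductive hypothesis. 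Theorem~\ref{thm:eu} then gives existence, uniqueness and the $L^2$ bound on $I_j$; concatenating over $j$ produces the unique c\`adl\`ag solution on $[0,T]$ with $\sup_{t\in[0,T]}\mathbb{E}|y_t|^2 \leq K$. For the $p^*$-moment bound I iterate the second part of Theorem~\ref{thm:eu} with the sequence of targets $p_j = (2/\rho)^j \bar{\bar p}_0$: Assumption $\bar{\bar A}$--\ref{astt:coercivity:delay:pj} supplies A--\ref{as:coercivity:p0} with $p_0 = p_j$ and
\begin{align*}
\bar M_t := L\bigl\{1 + |y_{t-\tau}|^{p_{j-1}} + \mathcal{W}_2^{p_{j-1}}(L_{y_{t-\tau}}, \delta_0)\bigr\},
\end{align*}
whose expectation is finite by the inductive hypothesis $\sup_{s\in I_{j-1}}\mathbb{E}|y_s|^{p_{j-1}} \leq K$ together with Jensen (since $p_{j-1} \geq 2$ because $\rho \geq 2$). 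Because $p_j \leq p_{j-1}$, the starting value $y_{(j-1)\tau}$ has finite $p_j$-moment, so after $M$ steps the conclusion holds on $[0,T]$ with $p^* = p_M$.

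The only delicate point I foresee is the filtration-shift trick, which reconciles the $\mathscr{F}_{t-\tau}$-measurability of $y_{t-\tau}$ with the $\mathscr{F}_0$-measurability required of the $M_t$ and $\bar M_t$ sequences in Theorem~\ref{thm:eu}; once that is in place, the remainder is a routine verification of the A--assumptions and a concatenation across the $M$ intervals. The geometric moment decay $p_j / p_{j-1} = 2/\rho$ at each delay horizon is an unavoidable consequence of the superlinear growth $|y|^\rho$ permitted in the delayed argument and accounts for the terminal exponent $p^* = (2/\rho)^M \bar{\bar p}_0$.
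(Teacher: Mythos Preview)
Your proposal is correct and mirrors the paper's own proof: both reduce \eqref{eq:sdedelay} on each subinterval $[(j-1)\tau,j\tau]$ to an instance of \eqref{eq:sde} via the same random-coefficient substitutions, verify Assumptions $A$--\ref{as:initial} to $A$--\ref{as:coercivity:p0} from $\bar{\bar A}$--\ref{astt:initial:delay} to $\bar{\bar A}$--\ref{astt:coercivity:delay:pj}, and invoke Theorem~\ref{thm:eu} inductively with the decreasing moment targets $p_j=(2/\rho)^j\bar{\bar p}_0$. The paper runs a single induction carrying the $p_j$-bound (which it then uses to control the $\rho$-moment in $M_t$ via $\rho\le p_\ell$), whereas you phrase the $L^2$ and $p^*$ parts as two separate inductions, but the verifications and the filtration-shift device you flag are identical in content.
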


%------------------------
\begin{proof}
We prove the result using an inductive argument. This is done by verifying Assumptions $A$--\ref{as:initial} to $A$--\ref{as:coercivity:p0}  on the interval $[(j-1)\tau, j\tau]$ and applying Theorem \ref{thm:eu} for $j=1,\ldots,M$. 
 For this, we define
\begin{align} \label{eq:coefficients:delay}
b_t(x,\mu):=f_t(x,y_{t-\tau}, &\, \mu, L_{y_{t-\tau}}),  \,\,   \sigma_t(x,\mu):=g_t(x,y_{t-\tau}, \mu, L_{y_{t-\tau}}), \notag
\\
  \gamma_t(x,\mu, z) & := h_t(x,y_{t-\tau}, \mu, L_{y_{t-\tau}}, z) 
\end{align} 
 almost surely for any $t\in[0,T]$, $x\in\mathbb{R}^d$, $\mu\in\mathcal{P}_2(\mathbb{R}^d)$ and $z\in Z$.
\newline

\textbf{Case} $\mathbf{j=1.}$ 
Assumption $A$--\ref{as:initial} holds trivially from Assumption $\bar{\bar{A}}$--\ref{astt:initial:delay}.   Further,  Assumption $A$--\ref{as:coercivity} follows from Assumption $\bar{\bar{A}}$--\ref{astt:coercivity:delay}. Indeed, for any $t\in[0,\tau]$, 
\begin{align}
2 x  & b_t(x, \mu)   + |\sigma_t(x,\mu)|^2 +   \int_Z |\gamma_t(x,\mu,z)|^2 \nu(dz)   =2 xf_t(x,y_{t-\tau},  \mu, L_{y_{t-\tau}}) +|g_t(x,y_{t-\tau},  \mu, L_{y_{t-\tau}})|^2 \notag
\\
&  \quad + \int_Z |h_t(x,y_{t-\tau}, \mu, L_{y_{t-\tau}}, z) |^2  \nu(dz)  \leq L  \{1+|x|^{2}+|y_{t-\tau}|^{\rho}+ \mathcal{W}_2^{2}(\mu,\delta_0)+ \mathcal{W}_{2}^{\rho}(L_{y_{t-\tau}},\delta_0)\} \notag
\\
& \leq L  \{1+|x|^{2}+|\xi_{t}|^{\rho}+ \mathcal{W}_2^{2}(\mu,\delta_0)+ \bE|\xi_{t}|^{\rho}\} = L  \{M_t+ \mathcal{W}_2^{2}(\mu,\delta_0)+|x|^{2}\}, \label{eq:coercivity:j=1}
\end{align}
where $M_t:=1+|\xi_{t}|^{\rho}+ (\mathbb{E}|\xi_{t}|^2)^{\rho/2}$ and $\sup_{t\in[0,\tau]}\mathbb{E}M_t <\infty$ because $ \rho\leq \bar{\bar{p}}_0$. 
Also, Assumptions $A$--\ref{as:monotonicity} and $A$--\ref{as:continuity} hold trivially from Assumptions $\bar{\bar{A}}$--\ref{astt:monotonicity:delay} and $\bar{\bar{A}}$--\ref{astt:continuity:delay} respectively. 
Thus, by applying Theorem  \ref{thm:eu}, there exists a unique c\`adl\`ag process satisfying  equation \eqref{eq:sdedelay} in the interval $[0,\tau]$ such that
\begin{align*}
\sup_{t\in [0,\tau]}\mathbb{E}|y_t|^{2} \leq K,
\end{align*}
where $K:=K( \sup_{t\in[0,\tau]} \mathbb{E}|\xi_t|^{2},m,d,L)$ is a positive constant.   
Moreover,  Assumption $\bar{\bar{A}}$--\ref{astt:coercivity:delay:pj} with $j=1$ implies Assumption $A$--\ref{as:coercivity:p0} for $p_0=p_1$. Indeed,  for any $t\in[0,\tau]$, 
\begin{align} 
2|x&|^{p_1-2}  xb_t(x, \mu) +(p_1-1) |x|^{p_1-2} |\sigma_t(x,\mu)|^2 \notag
\\
& \quad
+ 2 (p_1-1) \int_Z |\gamma_t(x,\mu,z)|^2 \int_0^1 (1-\theta)| x+\theta \gamma_t(x,\mu,z)|^{p_1-2} d\theta \nu(dz) \notag
\\
& =2|x|^{p_1-2}  xf_t(x,y_{t-\tau},  \mu, L_{y_{t-\tau}}) +(p_1-1) |x|^{p_1-2} |g_t(x,y_{t-\tau},  \mu, L_{y_{t-\tau}})|^2 \notag
\\
& \quad + 2 (p_1-1) \int_Z |h_t(x,y_{t-\tau}, \mu, L_{y_{t-\tau}}, z) |^2 \int_0^1 (1-\theta)| x+\theta h_t(x,y_{t-\tau}, \mu, L_{y_{t-\tau}}, z) |^{p_1-2} d\theta \nu(dz)  \notag
\\
& \leq L  \{1+|x|^{p_1}+|y_{t-\tau}|^{\bar{\bar{p}}_0}+ \mathcal{W}_2^{p_1}(\mu,\delta_0)+ \mathcal{W}^{\bar{\bar{p}}_0}_2(L_{y_{t-\tau}},\delta_0)\} \notag
\\
& \leq L  \big\{1+|x|^{p_1}+|\xi_{t}|^{\bar{\bar{p}}_0}+ \mathcal{W}_2^{p_1}(\mu,\delta_0)+ (\mathbb{E}|\xi_{t}|^{2})^{\bar{\bar{p}}_0/2}\} = L  \{\bar{M}_t+ \mathcal{W}_2^{p_1}(\mu,\delta_0)+|x|^{p_1}\big\} \label{eq:coercivity:pj:j=1}
\end{align}
almost surely where $\bar{M}_t=1+|\xi_{t}|^{\bar{\bar{p}}_0}+ (\mathbb{E}|\xi_{t}|^{2})^{\bar{\bar{p}}_0/2}$   satisfies $\sup_{t\in[0,\tau]}\mathbb{E}\bar{M}_t<\infty$. Thus,  by Theorem~\ref{thm:eu}, 
$$
\sup_{t\in[0,\tau]}\bE|y_t|^{p_1}\leq K,
$$ 
where $K:=K( \sup_{t\in[0,\tau]} \mathbb{E}|\xi_t|^{\bar{\bar{p}}_0},m,d,\bar{\bar{p}}_0,L)$ is a positive constant.

For the inductive argument, we assume that result hold for the case $j=\ell$, \textit{i.e.}, the McKean--Vlasov SDDE \eqref{eq:sdedelay} has a unique solution in the interval $[(\ell-1)\tau, \ell \tau]$,  which satisfies
\begin{align}
\sup_{t\in [(\ell-1)\tau, \ell\tau]}\mathbb{E}|y_t|^{p_{\ell}}\leq K, \label{eq:**}
\end{align}
where $K:=K( \sup_{ t \in [0, \tau]} \mathbb{E}|\xi_t|^{\bar{\bar{p}}_0},m,d,\bar{\bar{p}}_0,L)$ is a positive constant.
\newline
%\medskip
\textbf{Case $\mathbf{j=\ell+1.}$}
Assumption $A$--\ref{as:initial} holds from the inductive hypothesis. 
By using an argument similar to the one used in the estimate \eqref{eq:coercivity:j=1},  Assumption $A$--\ref{as:coercivity} holds from Assumption $\bar{\bar{A}}$--\ref{astt:coercivity:delay} with $M_t:=1+|y_{t-\tau}|^{\rho}+ (\mathbb{E}|y_{t-\tau}|^2)^{\rho/2}$, which clearly satisfies $\sup_{t\in[\ell \tau, (\ell+1) \tau]}\mathbb{E}M_t <\infty$ due to inductive argument as $ \rho\leq p_{\ell}$.
Further, Assumptions $A$--\ref{as:monotonicity} and $A$--\ref{as:continuity} hold trivially from Assumptions $\bar{\bar{A}}$--\ref{astt:monotonicity:delay} and $\bar{\bar{A}}$--\ref{astt:continuity:delay} respectively. 
Moreover, by using a similar argument as given for the estimate \eqref{eq:coercivity:pj:j=1},  Assumption $A$--\ref{as:coercivity:p0} holds from Assumption $\bar{\bar{A}}$--\ref{astt:coercivity:delay:pj} with $p_0=p_{\ell+1}$ and $\bar{M}_t=1+|y_{t-\tau}|^{p_{\ell}}+ (\mathbb{E}|y_{t-\tau}|^2)^{p_{\ell/2}}$ satisfies $\sup_{t\in[\ell\tau, (\ell+1)\tau]}\mathbb{E}\bar{M}_t<\infty$ due to estimate \eqref{eq:**}. Hence, by using Theorem \ref{thm:eu}, the McKean--Vlasov SDDE \eqref{eq:sdedelay} has a unique c\`adl\`ag solution, which satisfies
\begin{align*}
\sup_{t\in [\ell\tau, (\ell+1)\tau]}\mathbb{E}|y_t|^{p_{\ell+1}}\leq K, 
%\label{eq:**:}
\end{align*}
where $K:=K(  \sup_{ t \in [(\ell-1)\tau, \ell \tau]} \mathbb{E}|y_t|^{p_{\ell}},m,d, p_{\ell},L)= K(\sup_{ t \in [0, \tau]} \mathbb{E}|\xi_t|^{\bar{\bar{p}}_0},m,d, \bar{\bar{p}}_0,L)$ is a positive constant. From the inductive argument, one can also conclude that
\begin{align}
\sup_{t\in [0, T]}\mathbb{E}|y_t|^{p^*}\leq K, \label{eq:mb:delay}
\end{align}
where $K:= K(\sup_{ t \in [0, \tau]} \mathbb{E}|\xi_t|^{\bar{\bar{p}}_0},m,d, \bar{\bar{p}}_0,L)$. 
 This completes the proof. 
\end{proof}
The non-interacting particle system corresponding to McKean--Vlasov SDDE \eqref{eq:sdedelay} is given by 
\begin{align} \label{eq:sdedelay:non-interacting}
y_t^i & =y_0+\int^t_0 f_s(y_s^i, y_{s-\tau}^i, L_{y_s^i}, L_{y_{s-\tau}^i})ds +\int^t_0 g_s(y_s^i, y_{s-\tau}^i, L_{y_s^i}, L_{y_{s-\tau}}^i)dw_s^i  \notag
\\
& \qquad +\int^t_0 \int_Z h_s(y_s^i, y_{s-\tau}^i, L_{y_s^i}, L_{y_{s-\tau}^i}, z) \tilde{n}^i_p(ds,dz)
\end{align} 
almost surely for any $t\in[0,T]$.  Also, the interacting particle system associated with the above non-interacting particle system is given by
\begin{align} \label{eq:sdedelay:interacting}
y_t^{i,N} & =y_0+\int^t_0 f_s(y_s^{i,N}, y_{s-\tau}^{i,N}, \mu_s^{y,N}, \mu_{s-\tau}^{y,N})ds +\int^t_0 g_s(y_s^{i,N}, y_{s-\tau}^{i,N},  \mu_s^{y,N}, \mu_{s-\tau}^{y,N})dw_s^i  \notag
\\
& \qquad +\int^t_0 \int_Z h_s(y_s^{i,N}, y_{s-\tau}^{i,N},  \mu_s^{y,N}, \mu_{s-\tau}^{y,N}, z) \tilde{n}_p^i(ds,dz)
\end{align} 
almost surely for any $t\in[0,T]$ and $N\in\mathbb{N}$, where $\mu_t^{y,N}$ is an empirical measures given by
\begin{align*}
\mu_t^{y,N}(\cdot)= \frac{1}{N} \sum_{i=1}^N \delta_{y_t^{i,N}}(\cdot).
\end{align*}
\begin{prop}[\textbf{Propagation of Chaos}] \label{prop:propagation:delay}
Let Assumptions $\bar{\bar{A}}$--\ref{astt:initial:delay} to $\bar{\bar{A}}$--\ref{astt:coercivity:delay:pj} be satisfied with $\bar{\bar{p}}_0>4$. 
Then, the interacting particle system \eqref{eq:sdedelay:interacting} is wellposed and converges to the non-interacting particle system \eqref{eq:sdedelay:non-interacting} with a rate of convergence given by
\[ 
\sup_{i\in\{1,\ldots,N\}}\sup_{t\in[0,T]}\mathbb{E}|y^i_t-y^{i,N}_t|^2\leq K 
\begin{cases}
N^{-1/2}, &  \mbox{ if }  d<4, \\
N^{-1/2} \ln(N), &  \mbox{ if } d=4, \\
N^{-2/d}, &  \mbox{ if } d>4,
\end{cases}
\]
for any $N\in\mathbb{N}$, where  constant $K>0$ does not depend on $d$ and $N$. 
\end{prop}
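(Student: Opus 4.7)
My plan is to prove the proposition by induction on the interval index $j\in\{1,\ldots,M\}$, paralleling the strategy used in Theorem~\ref{thm:eu:delay}: on each subinterval $[(j-1)\tau,j\tau]$ the delay terms are supplied by the dynamics on the previous interval, so the McKean--Vlasov SDDE reduces to a McKean--Vlasov SDE with random, time-dependent coefficients, to which Proposition~\ref{prop:propagation} is applicable. Well-posedness of the interacting particle system on each subinterval follows from \cite[Theorem~1]{gyongy1980} exactly as in the proof of Proposition~\ref{prop:propagation}, together with an interval-by-interval moment bound obtained in the same inductive manner as \eqref{eq:mb:delay}. For the base case $j=1$, the delay arguments coincide on $[0,\tau]$, namely $y^i_{s-\tau}=y^{i,N}_{s-\tau}=\xi_s$; setting $b_s(x,\mu):=f_s(x,\xi_s,\mu,L_{\xi_s})$ and analogously for $\sigma$ and $\gamma$, the hypotheses of Proposition~\ref{prop:propagation} are verified from $\bar{\bar{A}}$--\ref{astt:initial:delay}--$\bar{\bar{A}}$--\ref{astt:coercivity:delay:pj} exactly as in Theorem~\ref{thm:eu:delay}, and the stated rate on $[0,\tau]$ is immediate.

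For the inductive step, assume the bound holds on $[(\ell-1)\tau,\ell\tau]$ with $\ell<M$. For $t\in[\ell\tau,(\ell+1)\tau]$, I would apply It\^o's formula to $|y^i_t-y^{i,N}_t|^2$ starting from $t=\ell\tau$. For each of $f,g,h$ the coefficient difference is split through an intermediate argument, for example
\begin{align*}
& f_s(y^i_s,y^i_{s-\tau},L_{y^i_s},L_{y^i_{s-\tau}}) - f_s(y^{i,N}_s,y^{i,N}_{s-\tau},\mu^{y,N}_s,\mu^{y,N}_{s-\tau}) \\
&\quad = \bigl[f_s(y^i_s,y^i_{s-\tau},L_{y^i_s},L_{y^i_{s-\tau}}) - f_s(y^{i,N}_s,y^i_{s-\tau},\mu^{y,N}_s,L_{y^i_{s-\tau}})\bigr] \\
&\qquad + \bigl[f_s(y^{i,N}_s,y^i_{s-\tau},\mu^{y,N}_s,L_{y^i_{s-\tau}}) - f_s(y^{i,N}_s,y^{i,N}_{s-\tau},\mu^{y,N}_s,\mu^{y,N}_{s-\tau})\bigr],
\end{align*}
and analogously for the diffusion and jump terms. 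In the first bracket the delay arguments are common, so Assumption $\bar{\bar{A}}$--\ref{astt:monotonicity:delay} applies and, together with the triangle-inequality decomposition $\mathcal{W}_2(L_{y^i_s},\mu^{y,N}_s)\le \mathcal{W}_2(L_{y^i_s},\mu^y_s)+\mathcal{W}_2(\mu^y_s,\mu^{y,N}_s)$ and \cite[Theorem~5.8]{carmona2018a}, produces precisely the dimension-dependent $N$-rate appearing in Proposition~\ref{prop:propagation}. The second bracket generates the delay error, which is supplied by the inductive hypothesis on $[(\ell-1)\tau,\ell\tau]$. A subsequent application of Gr\"onwall's lemma on $[\ell\tau,(\ell+1)\tau]$ closes the step, and since $M$ is finite the constant $K$ stays independent of $N$.

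The main obstacle is the treatment of the delay-error term, because the coercivity $\bar{\bar{A}}$--\ref{astt:coercivity:delay} permits super-linear growth of order $\rho$ in the delay argument while no explicit Lipschitz-in-$y$ estimate is postulated. I would overcome this in the spirit of the bound on $E_1$ in the proof of Theorem~\ref{thm:order-12}: for a small $\epsilon>0$, the delay contribution is estimated by a product of the form $\{\mathbb{E}(1+|y^i_{s-\tau}|+|y^{i,N}_{s-\tau}|)^{q}\}^{\epsilon/(2+\epsilon)}\cdot\{\mathbb{E}|y^i_{s-\tau}-y^{i,N}_{s-\tau}|^{2+\epsilon}\}^{2/(2+\epsilon)}$ for an appropriate $q$, where the first factor is finite and uniform in $N$ thanks to the $\mathcal{L}^{p^*}$-moment bound~\eqref{eq:mb:delay} and its particle-system analogue, and the second factor is dominated by (a power of) the rate from the inductive hypothesis. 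This delicate interpolation between super-linear growth in $y$ and the $\mathcal{L}^{2+\epsilon}$-strength of the delay-level PoC rate is what makes the induction self-sustaining across the finitely many intervals.
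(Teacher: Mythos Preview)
The paper's proof is a two-line reduction: since Assumptions $A$--\ref{as:initial} to $A$--\ref{as:coercivity:p0} were already verified interval-by-interval in the proof of Theorem~\ref{thm:eu:delay} via the identification~\eqref{eq:coefficients:delay}, Proposition~\ref{prop:propagation} is invoked directly without any further splitting or It\^o computation. Your inductive argument with an explicit delay-error term is therefore considerably more elaborate than what the paper records.

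Your instinct that the delay arguments in~\eqref{eq:sdedelay:non-interacting} and~\eqref{eq:sdedelay:interacting} differ---so that a literal application of Proposition~\ref{prop:propagation} with a \emph{single} random coefficient does not immediately cover the $(y,\mu')$-mismatch---is a legitimate concern that the paper is terse about. However, your proposed remedy has two concrete gaps. First, the H\"older product you write, $\{\mathbb{E}(1+|y^i_{s-\tau}|+|y^{i,N}_{s-\tau}|)^{q}\}^{\epsilon/(2+\epsilon)}\{\mathbb{E}|y^i_{s-\tau}-y^{i,N}_{s-\tau}|^{2+\epsilon}\}^{2/(2+\epsilon)}$, presupposes a polynomial Lipschitz estimate in the delay variable; this is precisely Assumption~$\bar{\bar{A}}$--\ref{astt:poly:lips:delpar}, which is \emph{not} among the hypotheses $\bar{\bar{A}}$--\ref{astt:initial:delay}--$\bar{\bar{A}}$--\ref{astt:coercivity:delay:pj} of the proposition. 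The coercivity-type conditions alone give no control on $f_s(x,y,\mu,\mu')-f_s(x,\bar y,\mu,\bar\mu')$, so the second bracket in your splitting cannot be bounded as claimed. Second, even granting such a bound, your induction consumes the delay-level PoC rate in $\mathcal{L}^{2+\epsilon}$, whereas the statement (and hence the only inductive hypothesis available) supplies an $\mathcal{L}^{2}$ rate; the loop does not close. The same issue already affects your base case: on $[0,\tau]$ the state-delay arguments coincide, but the measure-delay arguments $L_{\xi_s}$ and $N^{-1}\sum_j\delta_{\xi^j_s}$ do not, so the identification $b_s(x,\mu)=f_s(x,\xi_s,\mu,L_{\xi_s})$ does not make Proposition~\ref{prop:propagation} apply ``immediately'' to~\eqref{eq:sdedelay:interacting}.
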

%----------------
\begin{proof}
We have already seen in the proof of Theorem \ref{thm:eu:delay} that Assumptions $A$--\ref{as:initial} to $A$--\ref{as:coercivity:p0} follow from Assumptions $\bar{\bar{A}}$--\ref{astt:initial:delay} to $\bar{\bar{A}}$--\ref{astt:coercivity:delay:pj}. Hence, the result follows directly from Proposition \ref{prop:propagation}. 
\end{proof}
%----------------
We now proceed to the time discretization, \textit{i.e.}  a tamed Euler scheme for the interacting particle system \eqref{eq:sdedelay:interacting} associated with the McKean--Vlasov SDDE \eqref{eq:sdedelay}. For this, we make the following additional assumptions. 
%-----------------------
\begin{asAtt} \label{astt:coercivity:delay:rate}
There exists a constant $L>0$  such that
\begin{align*}
2|x|^{p_j-2} &x f_t(x, y, \mu, \mu') +(p_j-1) |x|^{p_j-2} |g_t(x, y, \mu, \mu')|^2 
\\
& + (p_j-1)2^{p_j-3} \int_Z \big\{  |h_t(x, y, \mu, \mu', z)|^2|x|^{p_j-2} + |h_t(x, y, \mu, \mu', z)|^{p_j} \big\}  \nu(dz)
\\
  \leq &  L\big\{1+|x|^{p_j}+|y|^{p_{j-1}}+\mathcal{W}_2^{p_j}(\mu,\delta_0)+\mathcal{W}_2^{p_{j-1}}(\mu',\delta_0) \big\}
\end{align*}
 for any $t\in[0,T]$, $x, y\in\mathbb{R}^d$  and $\mu, \mu' \in \mathcal{P}_2(\mathbb{R}^d)$ where $p_j=(2/\rho)^j \bar{\bar{p}}_0$ for $j=1,\ldots,M$. 
\end{asAtt}
%----------------
\begin{asAtt} \label{astt:monotonicity:delay:rate}
There exist constants $L>0$ and $\eta>1$  such that
\begin{align*}
2(x-\bar{x}) &(f_t(x,y,\mu, \mu')  -f_t(\bar{x},y,\bar{\mu}, \mu'))  + \eta |g_t(x,y,\mu, \mu')-g_t(\bar{x},y,\bar{\mu}, \mu')|^2 
\\
& + \eta \int_Z |h_t(x,y,\mu, \mu', z)-h_t(\bar{x},y,\bar{\mu}, \mu', z)|^2 \nu(dz)  \leq L\big\{|x-\bar{x}|^2 + \mathcal{W}_2^2(\mu,\bar{\mu}) \big\}
\end{align*}
 for any $x, \bar{x}, y  \in\mathbb{R}^d$, $\mu, \bar{\mu}, \mu' \in \mathcal{P}_2(\mathbb{R}^d)$ and $t\in[0,T]$. 
\end{asAtt}
%----------------
\begin{asAtt} \label{astt:poly:Lips:f:delay}
There exist  constants $L>0$ and $\chi>0$  such that
\begin{align*}
|f_t(x,y,\mu, \mu')  -f_t(\bar{x},y,\bar{\mu}, \mu') |^2 & \leq L\big\{ (1+|x|+|\bar{x}|)^{\chi} |x-\bar{x}|^2 +\mathcal{W}_2^2(\mu,\bar{\mu})\big\}
\end{align*}
for any $t\in[0,T]$, $x,\bar{x}, y\in\mathbb{R}^d$ and $\mu, \bar{\mu}, \mu'\in\mathcal{P}_2(\mathbb{R}^d)$. 
\end{asAtt}
%---------
\begin{asAtt} \label{astt:holder:time:delay}
There exists a constant $L>0$ such that
\begin{align*}
|f_t(x,y, & \mu, \mu')-f_s(x,y,\mu, \mu')|^2+|g_t(x,y,\mu, \mu')-g_s(x,y,\mu, \mu')|^2
\\
&+ \int_Z |h_t(x,y,\mu, \mu', z)-h_s(x,y,\mu, \mu', z)|^2  \nu(dz) \leq L |t-s|  
\end{align*}
for any $t, s \in[0,T]$, $x, y\in\mathbb{R}^d$ and $\mu, \mu'\in\mathcal{P}_2(\mathbb{R}^d)$. 
\end{asAtt}
%--------------------
\begin{asAtt} %\label{astt:bounded:delay}
There exists a constant $L>0$ such that
\begin{align*}
|f_t(0,0, \delta_0, \delta_0)|^2 \vee  |g_t(0,0, \delta_0,\delta_0)|^2  \vee  \int_Z |h_t(0,0, \delta_0, \delta_0, z)|^2\nu(dz) < L 
\end{align*}
for any $t\in[0,T]$. 
\end{asAtt}
%----------------
\begin{asAtt}\label{astt:poly:lips:delpar}
There exist constants $L>0$ and $\chi>0$ such that
\begin{align*}
|f_t & (x, y,  \mu, \mu')-f_t(x,\bar{y},\mu, \bar{\mu}')|^2 + |g_t(x, y,  \mu, \mu')-g_t(x,\bar{y}, \mu, \bar{\mu}')|^2
\\
& +\int_Z |h_t(x,y,\mu, \mu', z)-h_t(x,\bar{y},\mu, \bar{\mu}', z)|^2 \nu(dz)  \leq L\big\{ (1+|y|+|\bar{y}|)^{\chi} |y-\bar{y}|^2 +\mathcal{W}_2^{2}(\mu',\bar{\mu}')\big\}
\end{align*}
for any $t\in[0,T]$, $x, y, \bar{y} \in\mathbb{R}^d$ and $\mu, \mu', \bar{\mu}'\in\mathcal{P}_2(\mathbb{R}^d)$. 
\end{asAtt}
%----------------
\begin{asAtt} \label{astt:poly:gam:pj}
There exist  constants $L>0$ and $\chi>0$ such that for every $j\in\{1,\ldots,M\}$, 
\begin{align*}
\int_Z |h_t(x,y,\mu,\mu')|^{q}\nu(dz) \leq L\big\{1+|x|^{\chi/2+q}+|y|^{\chi + q} + \mathcal{W}_2^{q} (\mu,\delta_0)+ & \mathcal{W}^{q}_{2} (\mu',\delta_0)\big\}, 
\\
& 2\leq q\leq \bar{\bar{p}}_0 
\end{align*}
for any $t\in[0,T]$, $x, y \in\mathbb{R}^d$ and $\mu, \mu' \in\mathcal{P}_2(\mathbb{R}^d)$. 
\end{asAtt}
%---------------
\begin{asAtt} \label{astt:initial:rate}
There exists a constant $L>0$ such that $\sup_{t\in[0,\tau]}\mathbb{E}|\xi_t-\xi_{\kappa_n(t)}|^{2+\epsilon}\leq L n^{-2}$. 
\end{asAtt}
%----------------
\begin{rem} \label{rem:poly:lip:nondelay}
Due to Assumptions $\bar{\bar{A}}$--\ref{astt:monotonicity:delay:rate} and $\bar{\bar{A}}$--\ref{astt:poly:Lips:f:delay}, there exists a constant $L>0$ such that
\begin{align*}
|g_t(x,&\, y,  \mu, \mu')-g_t(\bar{x},y,\bar{\mu}, \mu')|^2+\int_Z |h_t(x,y,\mu, \mu', z)-h_t(\bar{x},y,\bar{\mu}, \mu', z)|^2 \nu(dz) 
\\
& \leq L\big\{ (1+|x|+|\bar{x}|)^{\chi/2} |x-\bar{x}|^2 +\mathcal{W}_2^2(\mu,\bar{\mu})\big\}
\end{align*}
for any $t\in[0,T]$, $x,\bar{x}, y\in\mathbb{R}^d$ and $\mu, \bar{\mu}, \mu'\in\mathcal{P}_2(\mathbb{R}^d)$. 
\end{rem}
%----------------
\begin{rem} \label{rem:growth:delay}
Due to Assumptions  $\bar{\bar{A}}$--\ref{astt:poly:Lips:f:delay} and $\bar{\bar{A}}$--\ref{astt:poly:lips:delpar} and Remark \ref{rem:poly:lip:nondelay}, there exists a constant $L>0$ such that
\begin{align*}
|f_t(x, y,  \mu, \mu')| & \leq L\big\{1+|x|^{\chi/2+1}+|y|^{\chi/2+1} + \mathcal{W}_2 (\mu,\delta_0)+  \mathcal{W}_2 (\mu',\delta_0)\big\}
\\
|g_t(x, y,  \mu, \mu')| & \leq L\big\{1+|x|^{\chi/4+1}+|y|^{\chi/2+1} + \mathcal{W}_2 (\mu,\delta_0)+  \mathcal{W}_2 (\mu',\delta_0)\big\}
\\
\int_Z |h_t(x, y,  \mu, \mu', z)|^2 \nu(dz) & \leq L\big\{1+|x|^{\chi/2+2}+|y|^{\chi+2} + \mathcal{W}_2^2 (\mu,\delta_0)+  \mathcal{W}_2^{2} (\mu',\delta_0)\big\} 
\end{align*}
for any $t\in[0,T]$, $x, y \in\mathbb{R}^d$ and $\mu, \mu' \in\mathcal{P}_2(\mathbb{R}^d)$. 
\end{rem}
%-------------------
\begin{rem}
Notice that Assumption $\bar{\bar{A}}$--\ref{astt:monotonicity:delay} follows from Assumption $\bar{\bar{A}}$--\ref{astt:monotonicity:delay:rate}. Thus, Theorem \ref{thm:eu:delay} and Proposition \ref{prop:propagation:delay} hold if  Assumptions $\bar{\bar{A}}$--\ref{astt:initial:delay}, $\bar{\bar{A}}$--\ref{astt:coercivity:delay}, $\bar{\bar{A}}$--\ref{astt:coercivity:delay:pj} and $\bar{\bar{A}}$--\ref{astt:monotonicity:delay:rate} are satisfied. 
\end{rem}

We propose the following tamed Euler scheme for the interacting particle system connected with the McKean--Vlasov SDDE \eqref{eq:sdedelay},  
\begin{align} \label{eq:scheme:delay}
 y_t^{i,N, n}  & =y_0+\int^t_0 \frac{f_{\kappa_n(s)}(y_{\kappa_n(s)}^{i,N,n}, y_{\kappa_n(s-\tau)}^{i,N,n}, \mu_{\kappa_n(s)}^{y,N,n}, \mu_{\kappa_n(s-\tau)}^{y,N,n})}{1+n^{-1/2}|y_{\kappa_n(s)}^{i,N,n}|^{\chi}+ n^{-1/2}|y_{\kappa_n(s-\tau)}^{i,N,n}|^{2\chi}}ds \notag
\\
& \quad +\int^t_0 \frac{g_{\kappa_n(s)}(y_{\kappa_n(s)}^{i,N,n}, y_{\kappa_n(s-\tau)}^{i,N,n},  \mu_{\kappa_n(s)}^{y,N, n}, \mu_{\kappa_n(s-\tau)}^{y,N, n})}{1+n^{-1/2}|y_{\kappa_n(s)}^{i,N,n}|^{\chi}+n^{-1/2}|y_{\kappa_n(s-\tau)}^{i,N,n}|^{2\chi}}dw_s^i   \notag
\\
& \quad +\int^t_0 \int_Z \frac{h_{\kappa_n(s)}(y_{\kappa_n(s)}^{i,N,n}, y_{\kappa_n(s-\tau)}^{i,N,n},  \mu_{\kappa_n(s)}^{y,N,n}, \mu_{\kappa_n(s-\tau)}^{y,N,n}, z)}{1+n^{-1/2}|y_{\kappa_n(s)}^{i,N,n}|^{\chi}+n^{-1/2}|y_{\kappa_n(s-\tau)}^{i,N,n}|^{2\chi}} \tilde{n}_p^i(ds,dz),
\end{align} 
where $\mu_{\kappa_n(t)}^{y,N,n}$ is an empirical measure given by
\begin{align*}
\mu_{\kappa_n(t)}^{y,N,n}(\cdot):= \frac{1}{N} \sum_{i=1}^N \delta_{y_{\kappa_n(t)}^{i,N,n}}(\cdot)
\end{align*}
almost surely for any $t\in[0,T]$.
%----------------
\begin{thm}[\textbf{Tamed Euler Scheme}]
Let Assumptions $\bar{\bar{A}}$--\ref{astt:initial:delay}, $\bar{\bar{A}}$--\ref{astt:coercivity:delay}, $\bar{\bar{A}}$--\ref{astt:coercivity:delay:rate} to
% $\bar{\bar{A}}$--\ref{ast:coercivity:delay:pj},  $\bar{\bar{A}}$--\ref{ast:monotonicity:delay:rate}, $\bar{\bar{A}}$--\ref{ast:poly:Lips:f:delay}, $\bar{\bar{A}}$--\ref{ast:holder:time:delay}, $\bar{\bar{A}}$--\ref{ast:bounded:delay} and 
$\bar{\bar{A}}$--\ref{astt:initial:rate} be satisfied. 
Then, the tamed Euler scheme  \eqref{eq:scheme:delay} converges in mean-square sense to the true solution of the interacting particle system  \eqref{eq:sdedelay:interacting} connected with the McKean--Vlasov SDE \eqref{eq:sdedelay}, 
%{\color{blue} 
and for any $\epsilon>0$ such that $p_0 \ge \chi (2+\epsilon)/\epsilon$,
%}
we have
\begin{align*}
\sup_{i\in\{1,\ldots,N\}} \sup_{t\in[0,T]}\mathbb{E}|y_t^{i,N}-y_t^{i,N,n}|^2 \leq K n^{-2/(2+\epsilon)}
\end{align*}
for any $n, N\in\mathbb{N}$, where $K>$ is a  constant independent of $n$ and $N$.  
\end{thm}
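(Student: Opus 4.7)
The plan is to mimic the Markovian-switching proof above but combined with the inductive interval decomposition used in the proof of Theorem~\ref{thm:eu:delay}. Write $[0,T] = \bigcup_{j=1}^{M}[(j-1)\tau, j\tau]$ and argue on one interval at a time. On the interval $[(j-1)\tau, j\tau]$, treat the delayed arguments $y_{s-\tau}$, $y_{s-\tau}^{i,N}$, $y_{\kappa_n(s-\tau)}^{i,N,n}$ and their empirical/true marginal laws as ``known'' objects coming from the previous step (or from the initial data $\xi$ when $j=1$). Set
\begin{align*}
b_t(x,\mu) &:= f_t\big(x, y_{t-\tau}, \mu, L_{y_{t-\tau}}\big), & b_t^n(x,\mu) &:= \frac{f_t(x, y^{i,N,n}_{\kappa_n(t-\tau)}, \mu, \mu^{y,N,n}_{\kappa_n(t-\tau)})}{1+n^{-1/2}|x|^{\chi}+n^{-1/2}|y^{i,N,n}_{\kappa_n(t-\tau)}|^{2\chi}},
\end{align*}
and analogously for $\sigma_t,\sigma_t^n,\gamma_t,\gamma_t^n$. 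The McKean--Vlasov SDDE on $[(j-1)\tau,j\tau]$ then fits the framework of equation \eqref{eq:sde} with random coefficients, and its tamed scheme \eqref{eq:scheme:delay} fits \eqref{eq:scheme}. The target is to verify all hypotheses of Theorem~\ref{thm:order-12} with $p_0 = p_j = (2/\rho)^j \bar{\bar{p}}_0$ on each interval, and to iterate.

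First I would verify the $A$--type assumptions exactly as in the proof of Theorem~\ref{thm:eu:delay}: Assumptions $A$--\ref{as:initial}, $A$--\ref{as:coercivity} and $A$--\ref{as:coercivity:p0} follow from $\bar{\bar{A}}$--\ref{astt:initial:delay}, $\bar{\bar{A}}$--\ref{astt:coercivity:delay} and $\bar{\bar{A}}$--\ref{astt:coercivity:delay:pj} by absorbing the terms depending on $y_{t-\tau}$ into the $\mathscr{F}_{(j-1)\tau}$-measurable random variables $M_t$ and $\bar M_t$ (this uses the induction hypothesis $\sup_{t\in[(j-1)\tau,j\tau]}\mathbb{E}|y_t|^{p_{j-1}}<\infty$ from \eqref{eq:mb:delay}). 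Assumptions $A$--\ref{as:monotonicity:rate}, $A$--\ref{as:poly:Lips:b}, $A$--\ref{as:holder:time} and $A$--\ref{as:bounded} follow directly from $\bar{\bar{A}}$--\ref{astt:monotonicity:delay:rate}, $\bar{\bar{A}}$--\ref{astt:poly:Lips:f:delay}, $\bar{\bar{A}}$--\ref{astt:holder:time:delay} and the bounded-at-zero assumption, respectively, since all of these only involve the non-delay state variable. The continuity assumption $A$--\ref{as:continuity} is immediate from $\bar{\bar{A}}$--\ref{astt:continuity:delay}.

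Next I would verify $B$--\ref{asb:coercivity:scheme} and $B$--\ref{asb:growth:n} for the tamed coefficients. Here the taming denominator $D_n(x,\bar y) := 1 + n^{-1/2}|x|^{\chi}+n^{-1/2}|\bar y|^{2\chi}\ge 1$ can be pulled out in front of the $A$--coercivity bound, exactly as was done for $F_1$ in the SDEwMS proof, so $B$--\ref{asb:coercivity:scheme} follows from $\bar{\bar{A}}$--\ref{astt:coercivity:delay:rate}. For the growth bounds in $B$--\ref{asb:growth:n}, I would combine Remark~\ref{rem:growth:delay} with the elementary inequality
\begin{align*}
\frac{1+|x|^{\chi+q}+|\bar y|^{\chi+q}}{1+n^{-1/2}|x|^{\chi}+n^{-1/2}|\bar y|^{2\chi}} \leq K\, n^{1/2}\big\{1+|x|^{q}+|\bar y|^{q}\big\},
\end{align*}
which immediately yields the required $n^{1/4},n^{1/6},n^{1/3}$ prefactors after raising to the appropriate root (this absorbs the cross-growth in the delayed variable because $|\bar y|^{2\chi}$ appears with the right power in $D_n$). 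For the dual ``linear-growth-in-$n$'' bound use Assumption $\bar{\bar{A}}$--\ref{astt:poly:gam:pj}. Lemma~\ref{lem:scheme:mb} then gives $\sup_{i,t,n}\mathbb{E}|y_t^{i,N,n}|^{p_j}<\infty$.

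Finally, the main obstacle is $B$--\ref{asb:diff:rate}. Writing
\begin{align*}
f_{\kappa_n(s)}(y^{i,N,n}_{\kappa_n(s)}, y^{i,N,n}_{\kappa_n(s-\tau)}, \cdot) - b^n_{\kappa_n(s)}(y^{i,N,n}_{\kappa_n(s)}, \mu^{y,N,n}_{\kappa_n(s)}) = f_{\kappa_n(s)}(\cdots)\,\frac{n^{-1/2}|y^{i,N,n}_{\kappa_n(s)}|^{\chi}+n^{-1/2}|y^{i,N,n}_{\kappa_n(s-\tau)}|^{2\chi}}{D_n},
\end{align*}
squaring, and using Remark~\ref{rem:growth:delay} plus $D_n\ge 1$, the integrand is pointwise bounded by $K n^{-1}$ times a polynomial in $|y^{i,N,n}_{\kappa_n(s)}|$ and $|y^{i,N,n}_{\kappa_n(s-\tau)}|$ of total degree at most $5\chi+2$ on the state and $4\chi+O(1)$ on the delayed variable. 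The degrees are chosen precisely so that the required moments are available as long as $p_j$ (and $p_{j-1}$, via the induction hypothesis for the delayed particle) exceeds the explicit threshold built into the constant $\bar{\bar p}_0$; assumption $\bar{\bar{A}}$--\ref{astt:initial:rate} supplies the analogous control when $s-\tau$ lies in $[0,\tau]$. The same computation handles the $g$ and $h$ pieces and proves $B$--\ref{asb:diff:rate} with rate $n^{-1}\leq n^{-2/(2+\epsilon)}$. Applying Theorem~\ref{thm:order-12} on $[(j-1)\tau,j\tau]$ with initial error $\sup_i\sup_{t\in[(j-2)\tau,(j-1)\tau]}\mathbb{E}|y^{i,N}_t - y^{i,N,n}_t|^2 \leq K n^{-2/(2+\epsilon)}$ (from the previous interval, or from $\bar{\bar{A}}$--\ref{astt:initial:rate} and H\"older for $j=1$) and iterating $M$ times completes the proof.
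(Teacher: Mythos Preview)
Your overall strategy---reduce to Theorem~\ref{thm:order-12} on each subinterval $[(j-1)\tau,j\tau]$ by inserting the delayed variable into the random coefficients, and iterate---is exactly the approach the paper takes, and your verification of the $A$-type assumptions and of $B$--\ref{asb:coercivity:scheme}, $B$--\ref{asb:growth:n} is correct in outline.

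The gap is in your verification of $B$--\ref{asb:diff:rate}. The quantity that must be controlled there is
\[
b_{\kappa_n(s)}\big(y_{\kappa_n(s)}^{i,N,n},\mu_{\kappa_n(s)}^{y,N,n}\big)-b^n_{\kappa_n(s)}\big(y_{\kappa_n(s)}^{i,N,n},\mu_{\kappa_n(s)}^{y,N,n}\big),
\]
and with your own definition of $b_t$ the untamed term carries the \emph{true} delayed interacting particle $y_{s-\tau}^{i,N}$ and its empirical law $\mu_{s-\tau}^{y,N}$, whereas $b^n$ carries $y_{\kappa_n(s-\tau)}^{i,N,n}$ and $\mu_{\kappa_n(s-\tau)}^{y,N,n}$. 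Your displayed identity $f(\cdot,y_{\kappa_n(s-\tau)}^{i,N,n},\cdot)-b^n=f\cdot(\text{numerator})/D_n$ therefore captures only the pure taming error; the left-hand side is not $b-b^n$. Two further pieces are missing and each requires its own ingredient: (i) the replacement $y_{s-\tau}^{i,N}\to y_{s-\tau}^{i,N,n}$ (and the corresponding empirical measures), which needs Assumption~$\bar{\bar{A}}$--\ref{astt:poly:lips:delpar} together with the inductive rate $\sup_{i}\sup_{t\in[(j-2)\tau,(j-1)\tau]}\mathbb{E}|y_t^{i,N}-y_t^{i,N,n}|^2\le Kn^{-2/(2+\epsilon)}$; and (ii) the replacement $y_{s-\tau}^{i,N,n}\to y_{\kappa_n(s-\tau)}^{i,N,n}$, which again needs $\bar{\bar{A}}$--\ref{astt:poly:lips:delpar} together with the one-step estimate of Corollary~\ref{cor:one-step:rate} on the previous interval (or $\bar{\bar{A}}$--\ref{astt:initial:rate} when $j=1$). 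The paper carries out exactly this three-term splitting; see equation~\eqref{eq:re:}. Your final sentence treats the inductive rate only as an ``initial error'' feeding into Theorem~\ref{thm:order-12}, but it must \emph{also} be fed into $B$--\ref{asb:diff:rate}, and without piece (i) the assumption cannot be verified for $j\ge 2$.
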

\begin{proof}
The result is shown by applying Theorem \ref{thm:order-12} on the interacting particle system \eqref{eq:interacting} connected to the 
McKean--Vlasov SDE \eqref{eq:sde} with coefficients given in equation \eqref{eq:coefficients:delay} and on its tamed Euler scheme \eqref{eq:scheme:delay} with coefficients given by
\begin{align}
b^n_t(x,\mu)& :=\frac{f_t(x, y_{\kappa_n(t-\tau)}^{i,N,n}, \mu, \mu_{\kappa_n(t-\tau)}^{y,N,n})}{1+n^{-1/2}|x|^{\chi}+n^{-1/2}|y_{\kappa_n(t-\tau)}^{i,N,n}|^{2\chi}}, \,\, \sigma^n_t(x,\mu):=\frac{g_t(x, y_{\kappa_n(t-\tau)}^{i,N,n}, \mu, \mu_{\kappa_n(t-\tau)}^{y,N,n})}{1+n^{-1/2}|x|^{\chi}+n^{-1/2}|y_{\kappa_n(t-\tau)}^{i,N,n}|^{2\chi}} \notag
\\
&\quad  \gamma^n_t(x,\mu, z):=\frac{h_t(x, y_{\kappa_n(t-\tau)}^{i,N,n}, \mu, \mu_{\kappa_n(t-\tau)}^{y,N,n},z)}{1+n^{-1/2}|x|^{\chi}+n^{-1/2}|y_{\kappa_n(t-\tau)}^{i,N,n}|^{2\chi}} \label{eq:coeff:tame:delay}
\end{align}
for any $t\in[0,T]$, $i\in\{1,\ldots,N\}$, $x\in\mathbb{R}^d$ and $\mu\in\mathcal{P}_2(\mathbb{R}^d)$ and  $n, N\in\mathbb{N}$. 

As before, we apply an inductive argument, \textit{i.e.}, we verify Assumptions $A$--\ref{as:initial}, $A$--\ref{as:coercivity}, $A$--\ref{as:coercivity:p0} to
 $A$--\ref{as:bounded}, $B$--\ref{asb:coercivity:scheme}, $B$--\ref{asb:growth:n} and $B$--\ref{asb:diff:rate}  
 on the interval $[(j-1)\tau, j\tau]$ and apply Theorem \ref{thm:order-12} for $j=1,\ldots,M$. 

First, notice that Assumptions $A$--\ref{as:initial}, $A$--\ref{as:coercivity}, $A$--\ref{as:continuity} and $A$--\ref{as:coercivity:p0} have already been verified in the proof of Theorem \ref{thm:eu:delay}. 
Also, Assumptions $A$--\ref{as:monotonicity:rate},  $A$--\ref{as:poly:Lips:b} and $A$--\ref{as:holder:time}  hold from  Assumptions $\bar{\bar{A}}$--\ref{astt:monotonicity:delay:rate}, $\bar{\bar{A}}$--\ref{astt:poly:Lips:f:delay}  and $\bar{\bar{A}}$--\ref{astt:holder:time:delay} respectively. 
Thus, it remains to verify Assumptions  $A$--\ref{as:bounded}, $B$--\ref{asb:coercivity:scheme}, $B$--\ref{asb:growth:n}   and $B$--\ref{asb:diff:rate}. 
\newline
\textbf{Case} $\mathbf{j=1.}$ 
To verify Assumption $A$--\ref{as:bounded}, one uses Remark \ref{rem:growth:delay} to obtain
\begin{align*}
|b_t(0,\delta_0)|^2= |f_t(0, y_{t-\tau}, \delta_0, L_{y_{t-\tau}})|^2 \leq L \{1+|y_{t-\tau}|^{\chi+2} + \mathcal{W}_2^{2}(L_{y_{t-\tau}}, \delta_0)\} =:L M_t
\end{align*}
almost surely for any $t\in[0,\tau]$, where $M_t=1+|\xi_{t}|^{\chi+2} + \mathbb{E}|\xi_{t}|^{2}$ satisfies $\sup_{t\in[0,\tau]}\mathbb{E}M_t< \infty$ due to Assumption $\bar{\bar{A}}$--\ref{astt:initial:delay}. Similarly,  
$$
|\sigma_t(0,\delta_0)|^2 \vee \int_Z |\gamma_t(0,\delta_0,z)|^2\nu(dz) \leq  L M_t 
$$
almost surely for any $t\in[0,\tau]$.  

For Assumption $B$--\ref{asb:coercivity:scheme}, using Assumption $\bar{\bar{A}}$--\ref{astt:coercivity:delay:rate} with $j=1$ (\textit{i.e.} $p_1$), one has
\begin{align*}
F_2:= &2|x|^{p_1-2}   xb_t^n(x, \mu) +(p_1-1) |x|^{p_1-2} |\sigma_t^n(x,\mu)|^2 
\\
& \quad + 2(p_1-1) \int_Z |\gamma_t^n(x,\mu,z)|^2 \int_0^1 |x+\theta\gamma_t^n(x,\mu,z)|^{p_1-2} d\theta  \nu(dz)
\\
=   &  \frac{2|x|^{p_1-2} x f_t(x, y_{\kappa_n(t-\tau)}^{i,N,n}, \mu, \mu_{\kappa_n(t-\tau)}^{y,N,n})}{1+n^{-1/2}|x|^{\chi}+n^{-1/2}|y_{\kappa_n(t-\tau)}^{i,N,n}|^{2\chi}} +  \frac{(p_1-1) |x|^{p_1-2} |g_t(x, y_{\kappa_n(t-\tau)}^{i,N,n}, \mu, \mu_{\kappa_n(t-\tau)}^{y,N,n})|^2}{(1+n^{-1/2}|x|^{\chi}+n^{-1/2}|y_{\kappa_n(t-\tau)}^{i,N,n}|^{2\chi})^2}
\\
& + 2(p_1-1) \int_Z \frac{|h_t(x, y_{\kappa_n(t-\tau)}^{i,N,n}, \mu, \mu_{\kappa_n(t-\tau)}^{y,N,n},z)|^2}{(1+n^{-1/2}|x|^{\chi}+n^{-1/2}|y_{\kappa_n(t-\tau)}^{i,N,n}|^{2\chi})^2} 
\\
& \qquad \times \int_0^1 \Big|x+ \frac{\theta h_t(x, y_{\kappa_n(t-\tau)}^{i,N,n}, \mu, \mu_{\kappa_n(t-\tau)}^{y,N,n},z)}{1+n^{-1/2}|x|^{\chi}+n^{-1/2}|y_{\kappa_n(t-\tau)}^{i,N,n}|^{2\chi}}\Big|^{p_1-2} d\theta  \nu(dz)
\\
\leq & \frac{2|x|^{p_1-2} x f_t(x, y_{\kappa_n(t-\tau)}^{i,N,n}, \mu, \mu_{\kappa_n(t-\tau)}^{y,N,n})}{1+n^{-1/2}|x|^{\chi}+n^{-1/2}|y_{\kappa_n(t-\tau)}^{i,N,n}|^{2\chi}} +  \frac{(p_1-1) |x|^{p_1-2} |g_t(x, y_{\kappa_n(t-\tau)}^{i,N,n}, \mu, \mu_{\kappa_n(t-\tau)}^{y,N,n})|^2}{(1+n^{-1/2}|x|^{\chi}+n^{-1/2}|y_{\kappa_n(t-\tau)}^{i,N,n}|^{2\chi})^2}
\\
& + (p_1-1)2^{p_1-3} \int_Z \frac{|h_t(x, y_{\kappa_n(t-\tau)}^{i,N,n}, \mu, \mu_{\kappa_n(t-\tau)}^{y,N,n},z)|^2}{(1+n^{-1/2}|x|^{\chi}+n^{-1/2}|y_{\kappa_n(t-\tau)}^{i,N,n}|^{2\chi})^2} 
\\
& \qquad \times \Big\{|x|^{p_1-2}+ \frac{|h_t(x, y_{\kappa_n(t-\tau)}^{i,N,n}, \mu, \mu_{\kappa_n(t-\tau)}^{y,N,n},z)|^{p_1-2}}{(1+n^{-1/2}|x|^{\chi}+n^{-1/2}|y_{\kappa_n(t-\tau)}^{i,N,n}|^{2\chi})^{p_1-2}} \Big\}\nu(dz)
\\
\leq & \frac{1}{1+n^{-1/2}|x|^{\chi}+n^{-1/2}|y_{\kappa_n(t-\tau)}^{i,N,n}|^{2\chi}}\Big\{ 2|x|^{p_1-2} x f_t(x, y_{\kappa_n(t-\tau)}^{i,N,n}, \mu, \mu_{\kappa_n(t-\tau)}^{y,N,n})
\\
&+(p_1-1) |x|^{p_1-2} |g_t(x, y_{\kappa_n(t-\tau)}^{i,N,n}, \mu, \mu_{\kappa_n(t-\tau)}^{y,N,n})|^2
\\
&+ (p_1-1)2^{p_1-3} \int_Z \big\{|h_t(x, y_{\kappa_n(t-\tau)}^{i,N,n}, \mu, \mu_{\kappa_n(t-\tau)}^{y,N,n},z)|^2 |x|^{p_1-2} + |h_t(x, y_{\kappa_n(t-\tau)}^{i,N,n}, \mu, \mu_{\kappa_n(t-\tau)}^{y,N,n},z)|^{p_1} \big\} \nu(dz)\Big\},
\end{align*}
which due to Assumption $\bar{\bar{A}}$--\ref{astt:coercivity:delay:rate} yields
\begin{align*}
F_2 &\leq L\big\{1+|x|^{p_1} +|y_{\kappa_n(t-\tau)}^{i,N,n}|^{\bar{\bar{p}}_0} + \mathcal{W}_2^{p_1}(\mu,\delta_0)+ \mathcal{W}_2^{\bar{\bar{p}}_0}(\mu_{\kappa_n(t-\tau)}^{y,N,n},z),\delta_0)\big\}
\\
& =:L \big\{\bar{M}_t^n +|x|^{p_1} + \mathcal{W}_2^{p_1}(\mu,\delta_0)\big\}
\end{align*}
almost surely for any $t\in[0,\tau]$, $i\in\{1,\ldots,N\}$ and $n,N\in\mathbb{N}$, where $\bar{M}_t^n=1+|\xi_{\kappa_n(t)}|^{ \bar{\bar{p}}_0}+\mathbb{E}|\xi_{\kappa_n(t)}|^{p_1}$ satisfies $\sup_{n\in\mathbb{N}}\sup_{t\in [0,\tau]}\mathbb{E}\bar{M}^n_t<\infty$ due to Assumption $\bar{\bar{A}}$--\ref{astt:initial:delay}.

We now proceed to verify Assumption $B$--\ref{asb:growth:n}. For this, one uses Remark \ref{rem:growth:delay} to obtain
\begin{align*}
|b^n_t(x,\mu)|^2& \leq \frac{|f_t(x, y_{\kappa_n(t-\tau)}^{i,N,n}, \mu, \mu_{\kappa_n(t-\tau)}^{y,N,n})|^2}{1+n^{-1/2}|x|^{\chi}+n^{-1/2}|y_{\kappa_n(t-\tau)}^{i,N,n}|^{2\chi}} 
\\
& \leq L \frac{1+|x|^{\chi+2}+|y_{\kappa_n(t-\tau)}^{i,N,n}|^{\chi+2}+\mathcal{W}_2^2 (\mu,\delta_0)+ \mathcal{W}_2^{2} (\mu_{\kappa_n(t-\tau)}^{y,N,n},\delta_0)}{1+n^{-1/2}|x|^{\chi}+n^{-1/2}|y_{\kappa_n(t-\tau)}^{i,N,n}|^{2\chi}},
\end{align*}
which implies, for any $t\in[0,\tau]$, $x\in\mathbb{R}^d$ and $\mu\in\mathcal{P}_2(\mathbb{R}^d)$, 
\begin{align*}
|b^n_t(x,\mu)| \leq K n^{1/4} \{ M_t^n+|x|+ \mathcal{W}_2 (\mu,\delta_0)\},
\end{align*}
 where $M_t^n= 1+|\xi_{\kappa_n(t)}|+ \big(\mathbb{E}|\xi_{\kappa_n(t)}|^{2}\big)^{1/2}$ satisfies $\sup_{n\in\mathbb{N}}\sup_{t\in[0,\tau]}\mathbb{E}~(M^n_t)^{p_1}<\infty$ due to  Assumption $\bar{\bar{A}}$--\ref{astt:initial:delay}. 
Furthermore, the application of Remark \ref{rem:growth:delay} yields 
\begin{align*}
|\sigma^n_t(x,\mu)|^4& \leq \frac{|g_t(x, y_{\kappa_n(t-\tau)}^{i,N,n}, \mu, \mu_{\kappa_n(t-\tau)}^{y,N,n})|^4}{1+n^{-1/2}|x|^{\chi}+n^{-1/2}|y_{\kappa_n(t-\tau)}^{i,N,n}|^{2\chi}}
\\
& \leq L \frac{1+|x|^{\chi+4}+|y_{\kappa_n(t-\tau)}^{i,N,n}|^{2\chi+4}+ \mathcal{W}_2^4(\mu,\delta_0)+ \mathcal{W}_2^{4}(\mu_{\kappa_n(t-\tau)}^{y,N,n},\delta_0)}{1+n^{-1/2}|x|^{\chi}+n^{-1/2}|y_{\kappa_n(t-\tau)}^{i,N,n}|^{2\chi}},
\end{align*}
which implies, for any $t\in[0,\tau]$, $x\in\mathbb{R}^d$ and $\mu\in\mathcal{P}_2(\mathbb{R}^d)$, 
\begin{align*}
|\sigma^n_t(x,\mu)| \leq K n^{1/8} \{M_t^n+|x| + \mathcal{W}_2(\mu,\delta_0)\},
\end{align*}
where $M_t^n=1+ |\xi_{\kappa_n(t)}|+( \mathbb{E}|\xi_{\kappa_n(t)}|^2)^{1/2}$ satisfies $\sup_{n\in\mathbb{N}}\sup_{t\in[0,\tau]}\mathbb{E}(M_t^n)^{p_1}<\infty$ due to Assumption  $\bar{\bar{A}}$--\ref{astt:initial:delay}. 
Again, due to Remark \ref{rem:growth:delay}, 
\begin{align*}
\Big(\int_Z &|\gamma_t^n(x,\mu,z)|^2 \nu(dz) \Big)^2 \leq  \Big(\int_Z \frac{|h_t(x, y_{\kappa_n(t-\tau)}^{i,N,n}, \mu, \mu_{\kappa_n(t-\tau)}^{y,N,n},z)|^2}{1+n^{-1/2}|x|^{\chi}+n^{-1/2}|y_{\kappa_n(t-\tau)}^{i,N,n}|^{2\chi}} \nu(dz) \Big)^2 
\\
& \leq L \frac{1+|x|^{\chi+4}+|y_{\kappa_n(t-\tau)}^{i,N,n}|^{2\chi+4}+\mathcal{W}_2^4(\mu,\delta_0)+\mathcal{W}_2^{4}(\mu_{\kappa_n(t-\tau)}^{y,N,n},\delta_0)}{1+n^{-1/2}|x|^{\chi}+n^{-1/2}|y_{\kappa_n(t-\tau)}^{i,N,n}|^{2\chi}},
\end{align*}
which implies, for any $t\in[0,\tau]$, $x\in\mathbb{R}^d$ and $\mu\in\mathcal{P}_2(\mathbb{R}^d)$,
\begin{align*}
\int_Z |\gamma_t^n(x,\mu,z)|^2 \nu(dz)  \leq K n^{1/4} \{(M_t^n)^2+|x|^2+\mathcal{W}_2^2(\mu,\delta_0)\}
\end{align*}
where $M^n_t=1+|y_{\kappa_n(t-\tau)}^{i,N,n}| + (\mathbb{E}|y_{\kappa_n(t-\tau)}^{i,N,n}|^2)^{1/2}$ satisfies $\sup_{n\in\mathbb{N}}\sup_{t\in[0,\tau]}\mathbb{E}(M_t^n)^{p_1}<\infty$. 
Finally, for $2\leq q \leq \bar{\bar{p}}_0$,  Assumption $\bar{\bar{A}}$--\ref{astt:poly:gam:pj} yields
\begin{align*}
\Big(\int_Z & |\gamma_t^n(x,\mu,z)|^{q} \nu(dz) \Big)^2 \leq \Big(\int_Z \frac{|h_t(x, y_{\kappa_n(t-\tau)}^{i,N,n}, \mu, \mu_{\kappa_n(t-\tau)}^{y,N,n},z)|^{q}}{1+n^{-1/2}|x|^{\chi}+n^{-1/2}|y_{\kappa_n(t-\tau)}^{i,N,n}|^{2\chi}} \nu(dz) \Big)^2 
\\
& \leq L \frac{1+|x|^{\chi+2q}+ |y_{\kappa_n(t-\tau)}^{i,N,n}|^{2\chi + 2q}+\mathcal{W}_2^{2 q}(\mu,\delta_0)+\mathcal{W}^{2 q}_2(\mu_{\kappa_n(t-\tau)}^{y,N,n},\delta_0)}{1+n^{-1/2}|x|^{\chi}+n^{-1/2}|y_{\kappa_n(t-\tau)}^{i,N,n}|^{2\chi}}
\\
& \leq Ln^{1/2}  \big\{1+|x|^{2q}+ |y_{\kappa_n(t-\tau)}^{i,N,n}|^{2q}+\mathcal{W}_2^{2 q}(\mu,\delta_0)+\mathcal{W}^{2 q}_2(\mu_{\kappa_n(t-\tau)}^{y,N,n},\delta_0) \big\},
\end{align*}
which further implies, for any $t\in[0,\tau]$, $x\in\mathbb{R}^d$ and $\mu\in\mathcal{P}_2(\mathbb{R}^d)$,
\begin{align*}
\int_Z & |\gamma_t^n(x,\mu,z)|^{q} \nu(dz)  \leq Ln^{1/4} \{\bar{M}^n_t + |x| + \mathcal{W}_2(\mu,\delta_0)\}^q,
\end{align*}
where $\bar{M}^n_t=1+|\xi_{\kappa_n(t)}|+(\mathbb{E}|\xi_{\kappa_n(t)}|^{2})^{1/2}$, satisfying $\bE(M_t^n)^{q}<\infty$ due to Assumption $\bar{\bar{A}}$--\ref{astt:initial:delay}. Thus, Assumption $B$--\ref{asb:growth:n} is satisfied in  the interval $[0,\tau]$. 

As Assumptions $A$--\ref{as:initial}, $B$--\ref{asb:coercivity:scheme}  and $B$--\ref{asb:growth:n} hold in the interval $[0,\tau]$, thus by Lemma \ref{lem:scheme:mb}, 
\begin{align*}
\sup_{i \in\{1,\ldots, N\}}\sup_{t\in[0,\tau]}\mathbb{E}|y_t^{i,N,n}|^{p_1} \leq K
\end{align*}   
for any $n, N\in\mathbb{N}$, where the constant $K>0$ does not depend on $n$ and $N$.

We now proceed to verify Assumption $B$--\ref{asb:diff:rate}. From equations \eqref{eq:coefficients:delay} and \eqref{eq:coeff:tame:delay}, one can write
\begin{align*}
 \mathbb{E}& \int_0^\tau |b_{\kappa_n(s)}(y_{\kappa_n(s)}^{i, N,n}, \mu_{\kappa_n(s)}^{y,N,n})- b^n_{\kappa_n(s)}(y_{\kappa_n(s)}^{i, N, n}, \mu_{\kappa_n(s)}^{y,N, n})|^2 ds 
 \\
 & =   \mathbb{E} \int_0^\tau \Big|f_{\kappa_n(s)}(y_{\kappa_n(s)}^{i, N,n}, y_{s-\tau}^{i,N}, \mu_{\kappa_n(s)}^{y,N,n}, \mu_{s-\tau}^{y,N})- \frac{f_{\kappa_n(s)}(y_{\kappa_n(s)}^{i, N,n}, y_{\kappa_n(s-\tau)}^{i,N,n}, \mu_{\kappa_n(s)}^{y,N,n}, \mu_{\kappa_n(s-\tau)}^{y,N,n})}{1+n^{-1/2}|y_{\kappa_n(s)}^{i, N,n}|^{\chi}+n^{-1/2}|y_{\kappa_n(s-\tau)}^{i,N,n}|^{2\chi}}\Big|^2 ds 
 \\
 & \leq  K \mathbb{E} \int_0^\tau \Big|f_{\kappa_n(s)}(y_{\kappa_n(s)}^{i, N,n}, y_{s-\tau}^{i,N}, \mu_{\kappa_n(s)}^{y,N,n}, \mu_{s-\tau}^{y,N})- f_{\kappa_n(s)}(y_{\kappa_n(s)}^{i, N,n}, y_{s-\tau}^{i,N,n}, \mu_{\kappa_n(s)}^{y,N,n}, \mu_{s-\tau}^{y,N,n})\Big|^2 ds 
  \\
 & + K \mathbb{E} \int_0^\tau \Big|f_{\kappa_n(s)}(y_{\kappa_n(s)}^{i, N,n}, y_{s-\tau}^{i,N,n}, \mu_{\kappa_n(s)}^{y,N,n}, \mu_{s-\tau}^{y,N,n})- f_{\kappa_n(s)}(y_{\kappa_n(s)}^{i, N,n}, y_{\kappa_n(s-\tau)}^{i,N,n}, \mu_{\kappa_n(s)}^{y,N,n}, \mu_{\kappa_n(s-\tau)}^{y,N,n})\Big|^2 ds 
 \\
 & +  K \mathbb{E} \int_0^\tau \Big|f_{\kappa_n(s)}(y_{\kappa_n(s)}^{i, N,n}, y_{\kappa_n(s-\tau)}^{i,N,n}, \mu_{\kappa_n(s)}^{y,N,n}, \mu_{\kappa_n(s-\tau)}^{y,N,n})- \frac{f_{\kappa_n(s)}(y_{\kappa_n(s)}^{i, N,n}, y_{\kappa_n(s-\tau)}^{i,N,n}, \mu_{\kappa_n(s)}^{y,N,n}, \mu_{\kappa_n(s-\tau)}^{y,N,n})}{1+n^{-1/2}|y_{\kappa_n(s)}^{i, N,n}|^{\chi}+n^{-1/2}|y_{\kappa_n(s-\tau)}^{i,N,n}|^{2\chi}}\Big|^2 ds 
\end{align*}
and then uses Assumption $\bar{\bar{A}}$--\ref{astt:poly:lips:delpar} and Remark \ref{rem:growth:delay} to obtain
\begin{align}
 \mathbb{E}& \int_0^\tau |b_{\kappa_n(s)}(y_{\kappa_n(s)}^{i, N,n}, \mu_{\kappa_n(s)}^{y,N,n})- b^n_{\kappa_n(s)}(y_{\kappa_n(s)}^{i, N, n}, \mu_{\kappa_n(s)}^{y,N, n})|^2 ds  \notag
 \\
 & \leq  K \mathbb{E} \int_0^\tau \big\{(1+| y_{s-\tau}^{i,N}|+|y_{s-\tau}^{i,N,n}|)^{\chi} | y_{s-\tau}^{i,N} - y_{s-\tau}^{i,N,n}|^2 + \mathcal{W}_2^2(\mu_{s-\tau}^{y,N}, \mu_{s-\tau}^{y,N,n})\big\} ds \notag
  \\
 & + K \mathbb{E} \int_0^\tau \big\{ (1+|y_{s-\tau}^{i,N,n}|+|y_{\kappa_n(s-\tau)}^{i,N,n}|)^{\chi}|y_{s-\tau}^{i,N,n}-y_{\kappa_n(s-\tau)}^{i,N,n}|^2+\mathcal{W}_2^2(\mu_{s-\tau}^{y,N,n}, \mu_{\kappa_n(s-\tau)}^{y,N,n}) \big\} ds \notag
 \\
& +  K n^{-1}\mathbb{E} \int_0^\tau \{|y_{\kappa_n(s)}^{i, N,n}|^{2\chi}+|y_{\kappa_n(s-\tau)}^{i,N,n}|^{4\chi}\} \notag
\\
&\qquad \times\big \{ 1+ |y_{\kappa_n(s)}^{i, N,n}|^{\chi+2} + |y_{\kappa_n(s-\tau)}^{i,N,n}|^{\chi+2} + \mathcal{W}_2^2 (\mu_{\kappa_n(s)}^{y,N,n}, \delta_0)+ \mathcal{W}_2^2(\mu_{\kappa_n(s-\tau)}^{y,N,n}, \delta_0) \big\} ds \label{eq:re:}
\end{align}
for any $n, N\in\mathbb{N}$ and $i\in\{1,\ldots,N\}$. 
Notice that the first term on the right side of the above inequality is zero as $y_{s-\tau}^{i,N}=y_{s-\tau}^{i,N,n}=\xi_{s}$ for any $s\in[0,\tau]$. In the second term, one uses Assumption $\bar{\bar{A}}$--\ref{astt:initial:rate} and H\"older's inequality and on the third term, one uses equation \eqref{eq:mb:delay} to obtain the following, 
\begin{align*}
 \mathbb{E}& \int_0^\tau |b_{\kappa_n(s)}(y_{\kappa_n(s)}^{i, N,n}, \mu_{\kappa_n(s)}^{y,N,n})- b^n_{\kappa_n(s)}(y_{\kappa_n(s)}^{i, N, n}, \mu_{\kappa_n(s)}^{y,N, n})|^2 ds \leq K n^{-2/(2+\epsilon)}
\end{align*}
for any $n, N\in\mathbb{N}$ and $i\in\{1,\ldots,N\}$, where the constant $K>0$ does not depend on $n$ and $N$. Further, from equations \eqref{eq:coefficients:delay} and \eqref{eq:coeff:tame:delay}, 
\begin{align*}
 \mathbb{E}& \int_0^\tau |\sigma_{\kappa_n(s)}(y_{\kappa_n(s)}^{i, N,n}, \mu_{\kappa_n(s)}^{y,N,n})- \sigma^n_{\kappa_n(s)}(y_{\kappa_n(s)}^{i, N, n}, \mu_{\kappa_n(s)}^{y,N, n})|^2 ds 
 \\
 & \leq  K \mathbb{E} \int_0^\tau \Big|g_{\kappa_n(s)}(y_{\kappa_n(s)}^{i, N,n}, y_{s-\tau}^{i,N}, \mu_{\kappa_n(s)}^{y,N,n}, \mu_{s-\tau}^{y,N})- g_{\kappa_n(s)}(y_{\kappa_n(s)}^{i, N,n}, y_{s-\tau}^{i,N,n}, \mu_{\kappa_n(s)}^{y,N,n}, \mu_{s-\tau}^{y,N,n})\Big|^2 ds 
  \\
 & + K \mathbb{E} \int_0^\tau \Big|g_{\kappa_n(s)}(y_{\kappa_n(s)}^{i, N,n}, y_{s-\tau}^{i,N,n}, \mu_{\kappa_n(s)}^{y,N,n}, \mu_{s-\tau}^{y,N,n})- g_{\kappa_n(s)}(y_{\kappa_n(s)}^{i, N,n}, y_{\kappa_n(s-\tau)}^{i,N,n}, \mu_{\kappa_n(s)}^{y,N,n}, \mu_{\kappa_n(s-\tau)}^{y,N,n})\Big|^2 ds 
 \\
 & +  K \mathbb{E} \int_0^\tau \Big|g_{\kappa_n(s)}(y_{\kappa_n(s)}^{i, N,n}, y_{\kappa_n(s-\tau)}^{i,N,n}, \mu_{\kappa_n(s)}^{y,N,n}, \mu_{\kappa_n(s-\tau)}^{y,N,n})- \frac{g_{\kappa_n(s)}(y_{\kappa_n(s)}^{i, N,n}, y_{\kappa_n(s-\tau)}^{i,N,n}, \mu_{\kappa_n(s)}^{y,N,n}, \mu_{\kappa_n(s-\tau)}^{y,N,n})}{1+n^{-1/2}|y_{\kappa_n(s)}^{i, N,n}|^{\chi}+n^{-1/2}|y_{\kappa_n(s-\tau)}^{i,N,n}|^{2\chi}}\Big|^2 ds,
\end{align*}
which on using  Assumption $\bar{\bar{A}}$--\ref{astt:poly:lips:delpar} and Remark \ref{rem:growth:delay} yields
\begin{align*}
 \mathbb{E}& \int_0^\tau |\sigma_{\kappa_n(s)}(y_{\kappa_n(s)}^{i, N,n}, \mu_{\kappa_n(s)}^{y,N,n})- \sigma^n_{\kappa_n(s)}(y_{\kappa_n(s)}^{i, N, n}, \mu_{\kappa_n(s)}^{y,N, n})|^2 ds 
 \\
 & \leq  K \mathbb{E} \int_0^\tau \big\{(1+| y_{s-\tau}^{i,N}|+|y_{s-\tau}^{i,N,n}|)^{\chi} | y_{s-\tau}^{i,N} - y_{s-\tau}^{i,N,n}|^2 + \mathcal{W}_2^2(\mu_{s-\tau}^{y,N}, \mu_{s-\tau}^{y,N,n})\big\} ds 
  \\
 & + K \mathbb{E} \int_0^\tau \big\{ (1+|y_{s-\tau}^{i,N,n}|+|y_{\kappa_n(s-\tau)}^{i,N,n}|)^{\chi}|y_{s-\tau}^{i,N,n}-y_{\kappa_n(s-\tau)}^{i,N,n}|^2+\mathcal{W}_2^2(\mu_{s-\tau}^{y,N,n}, \mu_{\kappa_n(s-\tau)}^{y,N,n}) \big\} ds
 \\
& +  K n^{-1}\mathbb{E} \int_0^\tau \{|y_{\kappa_n(s)}^{i, N,n}|^{2\chi}+|y_{\kappa_n(s-\tau)}^{i,N,n}|^{4\chi}\}
\\
&\qquad \times\big \{ 1+ |y_{\kappa_n(s)}^{i, N,n}|^{\chi/2+2} + |y_{\kappa_n(s-\tau)}^{i,N,n}|^{\chi+2} + \mathcal{W}_2^2 (\mu_{\kappa_n(s)}^{y,N,n}, \delta_0)+ \mathcal{W}_2^2(\mu_{\kappa_n(s-\tau)}^{y,N,n}, \delta_0) \big\} ds
\end{align*}
and then one uses  H\"older's inequality, Assumption $\bar{\bar{A}}$--\ref{astt:initial:rate} and equation \eqref{eq:mb:delay} to obtain
\begin{align*}
\mathbb{E}& \int_0^\tau |\sigma_{\kappa_n(s)}(y_{\kappa_n(s)}^{i, N,n}, \mu_{\kappa_n(s)}^{y,N,n})- \sigma^n_{\kappa_n(s)}(y_{\kappa_n(s)}^{i, N, n}, \mu_{\kappa_n(s)}^{y,N, n})|^2 ds \leq K n^{-2/(2+\epsilon)}
\end{align*}
for any $n, N\in\mathbb{N}$ and $i\in\{1,\ldots,N\}$, where the constant $K>0$ does not depend on $n$ and $N$. Again, from equations \eqref{eq:coefficients:delay} and \eqref{eq:coeff:tame:delay} along with Assumption $\bar{\bar{A}}$--\ref{astt:poly:lips:delpar}, Remark \ref{rem:growth:delay},  H\"older's inequality, Assumption~$\bar{\bar{A}}$--\ref{astt:initial:rate} and equation \eqref{eq:mb:delay}, 
\begin{align*}
  \mathbb{E} \int_0^\tau \int_Z |\gamma_{\kappa_n(s)}(y_{\kappa_n(s)}^{i, N,n}, \mu_{\kappa_n(s)}^{y,N,n}, z)- \gamma^n_{\kappa_n(s)}(y_{\kappa_n(s)}^{i, N, n}, \mu_{\kappa_n(s)}^{y,N, n}, z)|^2 \nu(dz) ds \leq K n^{-2/(2+\epsilon)}
\end{align*}
for any $n, N\in\mathbb{N}$ and $i\in\{1,\ldots,N\}$, where the constant $K>0$ does not depend on $n$ and $N$. This completes the verification of Assumption  $B$--\ref{asb:diff:rate}. 

Thus, the application of Corollary \ref{cor:one-step:rate} and Theorem \ref{thm:order-12}  on the interval $[0,\tau]$ yields
\begin{align*}
\sup_{i\in \{1,\ldots,N\}}\sup_{t\in [0,\tau]}\mathbb{E}|y_t^{i, N,n}-y_{\kappa_n(t)}^{i, N, n}|^2 \leq K n^{-2/(2+\epsilon)},
\\
\sup_{i\in \{1,\ldots,N\}}\sup_{t\in [0,\tau]}\mathbb{E}|y_t^{i, N}-y_t^{i, N, n}|^2 \leq K n^{-2/(2+\epsilon)}
\end{align*}
for any $n, N\in\mathbb{N}$ where the positive constant $K$ is independent of $n$ and $N$.

For the inductive argument, let us assume that the result  holds for $j=\ell$ and thus 
\begin{align}
\sup_{i \in\{1,\ldots, N\}}\sup_{t\in [(\ell-1)\tau,\ell \tau]}\mathbb{E}|y_t^{i,N,n}|^{p_\ell} & \leq K, \label{eq:mb:ind}
\\
\sup_{i\in \{1,\ldots,N\}}\sup_{t\in [(\ell-1)\tau,\ell \tau]}\mathbb{E}|y_t^{i, N, n}-y_{\kappa_n(t)}^{i, N, n}|^2 & \leq K n^{-2/(2+\epsilon)}, \label{eq:rate:indmm}
\\
\sup_{i\in \{1,\ldots,N\}}\sup_{t\in [(\ell-1)\tau,\ell \tau]}\mathbb{E}|y_t^{i, N}-y_t^{i, N, n}|^2 & \leq K n^{-2/(2+\epsilon)} \label{eq:rate:ind}
\end{align}
for any $n, N\in\mathbb{N}$, where the positive constant $K$ is independent of $n$ and $N$. 
\newline
%\medskip
\textbf{Case $\mathbf{j=\ell+1.}$} 
By using Remark \ref{rem:growth:delay},
\begin{align*}
|b_t(0,\delta_0)|^2= |f_t(0, y_{t-\tau}, \delta_0, L_{y_{t-\tau}})|^2 \leq L \{1+|y_{t-\tau}|^{\chi+2} + \mathcal{W}_2^{2}(L_{y_{t-\tau}}, \delta_0)\} =:L M_t
\end{align*}
almost surely for any $t\in[\ell\tau,(\ell+1)\tau]$ where $M_t=1+| y_{t-\tau}|^{\chi+2} + \mathbb{E}|y_{t-\tau}|^{2}$ satisfies $\sup_{t\in[\ell\tau,(\ell+1)\tau]}\mathbb{E}M_t= 1+\sup_{t\in[(\ell-1)\tau,\ell\tau]} \big\{\mathbb{E} |y_{t}|^{\chi+2} + \mathbb{E}|y_{t}|^{2}\big\}<\infty$ due to equation \eqref{eq:**}. Similarly,  
$$
|\sigma_t(0,\delta_0)|^2 \vee \int_Z |\gamma_t(0,\delta_0,z)|^2\nu(dz) \leq  L M_t 
$$
almost surely which implies Assumption $A$--\ref{as:bounded} holds on the interval $[\ell\tau,(\ell+1)\tau]$. 

For Assumption $B$--\ref{asb:coercivity:scheme}, we  adopt the arguments similar to the one used in the estimation of $F_2$ to obtain
\begin{align*}
2|x|^{p_{\ell+1}-2}  & xb_t^n(x, \mu) +(p_{\ell+1}-1) |x|^{p_{\ell+1}-2} |\sigma_t^n(x,\mu)|^2 
\\
& \quad + 2(p_{\ell+1}-1) \int_Z |\gamma_t^n(x,\mu,z)|^2 \int_0^1 |x+\theta\gamma_t^n(x,\mu,z)|^{p_{\ell+1}-2} d\theta  \nu(dz)
\\
& \leq L\big\{1+|x|^{p_{\ell+1}} +|y_{\kappa_n(t-\tau)}^{i,N,n}|^{p_\ell} + \mathcal{W}_2^{p_\ell}(\mu,\delta_0)+ \mathcal{W}_2^{p_{l}}(\mu_{\kappa_n(t-\tau)}^{y,N,n},z),\delta_0)\big\}
\\
& =:L \big\{\bar{M}_t^n +|x|^{p_{\ell+1}} + \mathcal{W}_2^{p_{\ell+1}}(\mu,\delta_0)\big\},
\end{align*}
 where $\bar{M}_t^n=1+|y_{\kappa_n(t-\tau)}^{i,N,n}|^{p_\ell} + \mathbb{E}|y_{\kappa_n(t-\tau)}^{i,N,n}|^{p_\ell}$ which due to equation \eqref{eq:mb:ind} satisfies 
 $$
 \sup_{n\in\mathbb{N}}\sup_{t\in[\ell\tau, (\ell+1)\tau]}\mathbb{E}M_t^n=1+ 2\sup_{n\in\mathbb{N}}\sup_{t\in[\ell\tau, (\ell+1)\tau]}\mathbb{E} |y_{\kappa_n(t-\tau)}^{i,N,n}|^{p_\ell} <\infty. 
 $$ 
Similarly, from equation  \eqref{eq:re:}, 
\begin{align*}
 \mathbb{E}& \int_{\ell \tau}^{(\ell+1)\tau} |b_{\kappa_n(s)}(y_{\kappa_n(s)}^{i, N,n}, \mu_{\kappa_n(s)}^{y,N,n})- b^n_{\kappa_n(s)}(y_{\kappa_n(s)}^{i, N, n}, \mu_{\kappa_n(s)}^{y,N, n})|^2 ds 
 \\
 & \leq  K \mathbb{E} \int_{\ell \tau}^{(\ell+1)\tau} \big\{(1+| y_{s-\tau}^{i,N}|+|y_{s-\tau}^{i,N,n}|)^{\chi} | y_{s-\tau}^{i,N} - y_{s-\tau}^{i,N,n}|^2 + \mathcal{W}_2^2(\mu_{s-\tau}^{y,N}, \mu_{s-\tau}^{y,N,n})\big\} ds 
  \\
 & + K \mathbb{E} \int_{\ell \tau}^{(\ell+1)\tau} \big\{ (1+|y_{s-\tau}^{i,N,n}|+|y_{\kappa_n(s-\tau)}^{i,N,n}|)^{\chi}|y_{s-\tau}^{i,N,n}-y_{\kappa_n(s-\tau)}^{i,N,n}|^2+\mathcal{W}_2^2(\mu_{s-\tau}^{y,N,n}, \mu_{\kappa_n(s-\tau)}^{y,N,n}) \big\} ds
 \\
& +  K n^{-1}\mathbb{E} \int_{\ell \tau}^{(\ell+1)\tau} \{|y_{\kappa_n(s)}^{i, N,n}|^{2\chi}+|y_{\kappa_n(s-\tau)}^{i,N,n}|^{4\chi}\}
\\
&\qquad \times\big \{ 1+ |y_{\kappa_n(s)}^{i, N,n}|^{\chi+2} + |y_{\kappa_n(s-\tau)}^{i,N,n}|^{\chi+2} + \mathcal{W}_2^2 (\mu_{\kappa_n(s)}^{y,N,n}, \delta_0)+ \mathcal{W}_2^2(\mu_{\kappa_n(s-\tau)}^{y,N,n}, \delta_0) \big\} ds,
\end{align*}
which due to H\"older's inequality, equations \eqref{eq:mb:ind}, \eqref{eq:rate:indmm} and \eqref{eq:rate:ind} yields
\begin{align*}
& \mathbb{E} \int_{\ell \tau}^{(\ell+1)\tau} |b_{\kappa_n(s)}(y_{\kappa_n(s)}^{i, N,n}, \mu_{\kappa_n(s)}^{y,N,n})- b^n_{\kappa_n(s)}(y_{\kappa_n(s)}^{i, N, n}, \mu_{\kappa_n(s)}^{y,N, n})|^2 ds 
 \\
\leq & K  \int_{\ell \tau}^{(\ell+1)\tau} \big\{\mathbb{E}(1+| y_{s-\tau}^{i,N}|+|y_{s-\tau}^{i,N,n}|)^{\chi (2+\epsilon)/\epsilon} \}^{\epsilon/(2+\epsilon)} \{\mathbb{E}| y_{s-\tau}^{i,N} - y_{s-\tau}^{i,N,n}|^{2+\epsilon}\}^{2/(2+\epsilon)} + \mathbb{E}| y_{s-\tau}^{i,N} - y_{s-\tau}^{i,N,n}|^{2}\big\} ds 
  \\
 &  + K  \int_{\ell \tau}^{(\ell+1)\tau} \Big\{\big\{ \mathbb{E}(1+|y_{s-\tau}^{i,N,n}|+|y_{\kappa_n(s-\tau)}^{i,N,n}|)^{\chi (2+\epsilon)/\epsilon}\big\}^{\epsilon/(2+\epsilon)} \big\{\mathbb{E}|y_{s-\tau}^{i,N,n}-y_{\kappa_n(s-\tau)}^{i,N,n}|^{2+\epsilon}\big\}^{2/(2+\epsilon)}
 \\
 & + \mathbb{E}|y_{s-\tau}^{i,N,n}-y_{\kappa_n(s-\tau)}^{i,N,n}|^{2+\epsilon} \Big\} ds +K n^{-1} \leq K n^{-2/(2+\epsilon)}
\end{align*}
for any $n, N\in\mathbb{N}$, where the constant $K>0$ does not depend on $n$ and $N$. Similarly, by adopting a similar arguments as above, we have 
\begin{align*}
 \mathbb{E} \int_{\ell \tau}^{(\ell+1)\tau} |\sigma_{\kappa_n(s)}(y_{\kappa_n(s)}^{i, N,n}, \mu_{\kappa_n(s)}^{y,N,n})- \sigma^n_{\kappa_n(s)}(y_{\kappa_n(s)}^{i, N, n}, \mu_{\kappa_n(s)}^{y,N, n})|^2 ds &\leq K n^{-2/(2+\epsilon)}
\\
 \mathbb{E}\int_{\ell \tau}^{(\ell+1)\tau} \int_Z |\gamma_{\kappa_n(s)}(y_{\kappa_n(s)}^{i, N,n}, \mu_{\kappa_n(s)}^{y,N,n}, z)- \gamma^n_{\kappa_n(s)}(y_{\kappa_n(s)}^{i, N, n}, \mu_{\kappa_n(s)}^{y,N, n}, z)|^2 \nu(dz) ds & \leq K n^{-2/(2+\epsilon)}
\end{align*}
for any $n, N\in\mathbb{N}$, where the constant $K>0$ does not depend on $n$ and $N$. Thus, Assumption $B$--\ref{asb:diff:rate} holds in the interval $[\ell\tau, (\ell+1)\tau]$, which implies that 
\begin{align*}
\sup_{i\in \{1,\ldots,N\}}\sup_{t\in [\ell\tau,(\ell+1) \tau]}\mathbb{E}|y_t^{i, N}-y_t^{i, N, n}|^2 & \leq K n^{-2/(2+\epsilon)} 
\end{align*}
for any $n, N\in\mathbb{N}$, where the constant $K>0$ does not depend on $n$ and $N$. This completes the proof. 
\end{proof}

%----------------reference

\appendix

\section{Auxiliary Result}
\begin{lem} \label{lem:cms}
The space $\mathbb{D}([0,T]; \mathcal{P}_2(\mathbb{R}^d))$ of c\`adl\`ag functions taking values in $\mathcal{P}_2(\mathbb{R}^d)$ is a complete metric space under the metric given by
\begin{align} \label{eq:metric}
\rho(\mu, \bar{\mu}):=\sup_{t\in[0,T]} \mathcal{W}_2 (\mu_t, \bar{\mu}_t)
\end{align}
for any $\mu,\bar{\mu}\in \mathbb{D}([0,T]; \mathcal{P}_2(\mathbb{R}^d))$. 
\end{lem}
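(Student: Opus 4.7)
The plan is standard: first I would verify that $\rho$ is indeed a metric, then show completeness by taking a Cauchy sequence, constructing the candidate limit pointwise via completeness of $(\mathcal{P}_2(\mathbb{R}^d), \mathcal{W}_2)$, upgrading to uniform convergence, and finally checking that the càdlàg property is preserved under uniform limits.

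The metric properties (positivity, symmetry, non-degeneracy, triangle inequality) are all immediate from the corresponding properties of $\mathcal{W}_2$ on $\mathcal{P}_2(\mathbb{R}^d)$, with the supremum over $t\in[0,T]$ preserving them. For completeness, let $\{\mu^n\}_{n\in\mathbb{N}} \subset \mathbb{D}([0,T]; \mathcal{P}_2(\mathbb{R}^d))$ be a Cauchy sequence with respect to $\rho$. Since for every $t\in[0,T]$ one has $\mathcal{W}_2(\mu^n_t, \mu^m_t) \leq \rho(\mu^n, \mu^m)$, the sequence $\{\mu^n_t\}_{n\in\mathbb{N}}$ is Cauchy in $(\mathcal{P}_2(\mathbb{R}^d), \mathcal{W}_2)$. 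It is a classical fact that $(\mathcal{P}_2(\mathbb{R}^d), \mathcal{W}_2)$ is a complete (Polish) space, so there exists $\mu_t \in \mathcal{P}_2(\mathbb{R}^d)$ with $\mathcal{W}_2(\mu^n_t, \mu_t) \to 0$ as $n\to\infty$. This defines a pointwise candidate $t \mapsto \mu_t$.

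To upgrade to uniform convergence, fix $\varepsilon>0$ and choose $N$ such that $\rho(\mu^n,\mu^m) < \varepsilon$ whenever $n,m\geq N$. For each $t\in[0,T]$ and each $n\geq N$, letting $m\to\infty$ in $\mathcal{W}_2(\mu^n_t, \mu^m_t) \leq \varepsilon$ yields $\mathcal{W}_2(\mu^n_t, \mu_t) \leq \varepsilon$, by lower semicontinuity of $\mathcal{W}_2$ with respect to weak convergence on $\mathcal{P}_2(\mathbb{R}^d)$ (equivalently, just by continuity of $\mathcal{W}_2$). Taking the supremum over $t$ then gives $\rho(\mu^n,\mu) \leq \varepsilon$ for all $n\geq N$, so $\mu^n \to \mu$ uniformly.

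The main thing to verify is that $t\mapsto \mu_t$ is càdlàg. For right continuity at $t\in[0,T)$, given $\varepsilon>0$, fix $n$ with $\rho(\mu^n,\mu)<\varepsilon/3$ and then choose $\delta>0$ so that $\mathcal{W}_2(\mu^n_s, \mu^n_t) < \varepsilon/3$ for $s\in[t,t+\delta)$, using that $\mu^n$ is càdlàg; the triangle inequality
\[
\mathcal{W}_2(\mu_s,\mu_t) \leq \mathcal{W}_2(\mu_s,\mu^n_s) + \mathcal{W}_2(\mu^n_s,\mu^n_t) + \mathcal{W}_2(\mu^n_t,\mu_t) < \varepsilon
\]
then gives right continuity. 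For the existence of left limits at $t\in(0,T]$, I would apply the Cauchy criterion: for $s_1,s_2\in(t-\delta,t)$ with $\delta$ chosen as above using left limits of $\mu^n$, a similar three-term estimate shows $\mathcal{W}_2(\mu_{s_1},\mu_{s_2}) < \varepsilon$, so $\{\mu_s\}_{s\uparrow t}$ is Cauchy in $\mathcal{P}_2(\mathbb{R}^d)$ and therefore converges by completeness. Hence $\mu \in \mathbb{D}([0,T]; \mathcal{P}_2(\mathbb{R}^d))$, completing the proof. The only non-routine input is the completeness of $(\mathcal{P}_2(\mathbb{R}^d), \mathcal{W}_2)$, which is standard; everything else is a careful but elementary three-term-triangle argument.
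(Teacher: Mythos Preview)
Your proposal is correct and follows essentially the same route as the paper: verify the metric axioms, build the pointwise limit via completeness of $(\mathcal{P}_2(\mathbb{R}^d),\mathcal{W}_2)$, upgrade to uniform convergence by passing to the limit in the Cauchy estimate, and then check the c\`adl\`ag property by three-term triangle inequalities. The only cosmetic difference is that the paper verifies left limits by first showing $\{\mu^n_{t-}\}_n$ is Cauchy and identifying its limit with $\mu_{t-}$, whereas you apply the Cauchy criterion directly to $\{\mu_s\}_{s\uparrow t}$; both are equivalent rearrangements of the same estimate.
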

\begin{proof}
It is easy to verify that  $\rho$ is a metric. Indeed, $\rho(\mu,\bar{\mu})<\infty$ for any $\mu,\bar{\mu}\in \mathbb{D}([0,T]; \mathcal{P}_2(\mathbb{R}^d))$ since c\`adl\`ag functions on $[0,T]$ are bounded. Also, $\rho(\mu,\bar{\mu})\geq 0$ and $\rho(\mu,\bar{\mu})= 0$ $\Leftrightarrow$ $ \mathcal{W}_2 (\mu_t, \bar{\mu}_t)=0$ $\Leftrightarrow$ $\mu_t=\bar{\mu}_t$ for all $t\in[0,T]$, \textit{i.e.} $\mu=\bar{\mu}$. Further, $\rho(\mu, \bar{\mu})=\sup_{t\in[0,T]} \mathcal{W}_2 (\mu_t, \bar{\mu}_t)=\sup_{t\in[0,T]} \mathcal{W}_2 (\bar{\mu}_t, \mu_t)$ as $\mathcal{W}_2$ is a metric on $\mathcal{P}_2(\mathbb{R}^d)$ and thus $\rho(\mu,\bar{\mu})= \rho(\bar{\mu}, \mu)$.  Finally,  $\rho(\mu,\bar{\mu})=\sup_{t\in[0,T]} \mathcal{W}_2 (\mu_t, \bar{\mu}_t) \leq \sup_{t\in[0,T]} \mathcal{W}_2 (\mu_t, \hat{\mu}_t)+ \sup_{t\in[0,T]} \mathcal{W}_2 (\hat{\mu}_t, \bar{\mu}_t)= \rho(\mu,\hat{\mu})+\rho(\hat{\mu}, \bar{\mu})$ for any $\mu, \hat{\mu}$ and $\bar{\mu} \in \mathbb{D}([0,T]; \mathcal{P}_2(\mathbb{R}^d))$. 

Let $\{\mu^n\}_{n\in\mathbb{N}}$ be a Cauchy sequence in  $\mathbb{D}([0,T]; \mathcal{P}_2(\mathbb{R}^d))$. Thus for every $\epsilon>0$, there exists a $p\in\mathbb{N}$ such that $\rho(\mu^n,\mu^m)=\sup_{t\in[0,T]} \mathcal{W}_2 (\mu_t^n, \mu_t^m)<\epsilon/3$ for all $n,m\geq p$ which further implies 
\begin{equation*}%\label{eq:cauchy}
\mathcal{W}_2 (\mu_t^n, \mu_t^m)<\epsilon/3,  \quad  \forall \,  t\in[0,T]. 
\end{equation*}
Thus,   $\{\mu^n_t\}_{n\in\mathbb{N}}$ is a uniformly Cauchy sequence in the complete metric $\mathcal{P}_2(\mathbb{R}^d)$ and hence converges uniformly to (say) $\mu_t\in\mathcal{P}_2(\mathbb{R}^d)$. Indeed, for every $t\in[0,T]$, $\{\mu^n_t\}_{n\in\mathbb{N}}$ is a Cauchy sequence in a complete metric space and hence converges to (say) $\mu_t\in\mathcal{P}_2(\mathbb{R}^d)$. We now show that the convergence is uniform. Fixing $p\in\mathbb{N}$ and $n\geq p$, we have $\mathcal{W}_2(\mu_t^n, \mu_t^m)<\epsilon/3$ for all $m\geq p$ and $t\in[0,T]$. Thus, for above  $p\in\mathbb{N}$ and $n\geq p$, one has 
\begin{equation}\label{eq:uniform}
\mathcal{W}_2 (\mu_t, \mu_t^n) \leq \mathcal{W}_2 (\mu_t, \mu_t^m)+ \mathcal{W}_2 (\mu_t^m, \mu_t^n)  <  \mathcal{W}_2 (\mu_t, \mu_t^m)+ \epsilon/3 < 2\epsilon/3,    \quad \forall \, t\in[0,T]
\end{equation}
 and  $m\geq \max\{q,p\}$, where the last inequality is obtained using the fact that from the point-wise convergence, for above $\epsilon>0$ and $t\in[0,T]$, there exists a $q\in\mathbb{N}$ such that $ \mathcal{W}_2 (\mu_t, \mu_t^m)<\epsilon/3$ for all $m\geq q$. The dependence of $q$ on $t$ does not matter as the later calculations do not involve $m$. It is clear from equation \eqref{eq:uniform} that the convergence is uniform. 
 
 We now proceed to show that $\mu \in \mathbb{D}([0,T]; \mathcal{P}_2(\mathbb{R}^d))$.  Clearly, $\mu_t \in \mathcal{P}_2(\mathbb{R}^d)$ and we consider  points in $[0, T]$ at which $\mu$ is discontinuous. Let $t\in [0,T]$ be a point of discontinuity of $\mu$.   Denote by $\mu_{t-}^n$, the left limit of $\mu^n$ at $t$. Then, for given $m,n\geq n_0$, there exists $\delta^m, \delta^n>0$ such that 
 \begin{align*}
 \mathcal{W}_2 (\mu_s^m, \mu_{t-}^m) & < \epsilon/3, \quad s\in (t-\delta^m, t)
 \\
  \mathcal{W}_2 (\mu_s^n, \mu_{t-}^n) & < \epsilon/3, \quad s\in (t-\delta^n, t). 
\end{align*}  
Choose $\delta=\delta^m \wedge \delta^n$, then for all $s\in (t-\delta, t)$, 
\begin{align*}
\mathcal{W}_2 (\mu_{t-}^n,\mu_{t-}^m) \leq \mathcal{W}_2 (\mu_{t-}^n,\mu_{s}^n) + \mathcal{W}_2 (\mu_{s}^n,\mu_{s}^m) + \mathcal{W}_2 (\mu_{s}^m,\mu_{t-}^m) <\epsilon
\end{align*}
for all $n,m\geq n_0$. Thus, $\{\mu_{t-}^n\}_{n\in\mathbb{N}}$ is a Cauchy sequence in $\mathcal{P}_2(\mathbb{R}^d)$ and hence converges to $\mu_{t-}^{*} \in\mathcal{P}_2(\mathbb{R}^d)$. We now show that $\mu_{t-}=\mu_{t-}^{*}$. For this,  let  $\epsilon>0$ be any and choose $n\in \mathbb{N}$ sufficiently large so that 
$\mathcal{W}_2 (\mu_{s}, \mu_{s}^n)<\epsilon/3$ and $ \mathcal{W}_2 (\mu_{t-}^n, \mu_{t-}^{*})<\epsilon/3$. Corresponding to this $n$, there exists a $\delta>0$ such that for any $s\in(t-\delta, t)$, we have  $\mathcal{W}_2 (\mu_{s}^n, \mu_{t-}^n)<\epsilon/3$. Thus, corresponding to above $\epsilon>0$, we have found a $\delta>0$ such that for any $s\in(t-\delta, t)$, we have
\begin{align*}
\mathcal{W}_2 (\mu_{s}, \mu_{t-}^{*}) \leq \mathcal{W}_2 (\mu_{s}, \mu_{s}^n)+ \mathcal{W}_2 (\mu_{s}^n, \mu_{t-}^n)+ \mathcal{W}_2 (\mu_{t-}^n, \mu_{t-}^{*}) < \epsilon
\end{align*}
and hence $\mu_{t-}=\mu_{t-}^{*}$.  Now, let $\mu_{t+}$ denotes the right limit of $\mu$ at $t$ and $\mu_{t+}^n$ denotes the right limit of $\mu^n$ at $t$.  As above showing that $\{\mu_{t+}^n\}_{n\in\mathbb{N}}$ is a Cauchy sequence and hence converging point-wise to $\mu_{t+}^{*}$, for every $\epsilon>0$ we obtain a $\delta>0$ such that for any $s\in (t, t+\delta)$, we have  
\begin{align*}
\mathcal{W}_2 (\mu_{s}, \mu_{t+}^{*}) & \leq \mathcal{W}_2 (\mu_{s}, \mu_{s}^n)+ \mathcal{W}_2 (\mu_{s}^n, \mu_{t+}^n)+ \mathcal{W}_2 (\mu_{t+}^n, \mu_{t+}^{*}) < \epsilon
\end{align*}
for a sufficiently large $n$ which implies $\mu_{t+}=\mu_{t+}^{*}$. But observe that $\mu_{t+}^n=\mu_t^n$ for each $n$ and the uniform limit $\mu_t$ is also the point-wise limit of the sequence $\{\mu_t^n\}$. Thus, we have $\mu_{t+}=\mu_{t+}^{*}=\mu_t$ and hence $\mu\in \mathbb{D}([0,T]; \mathcal{P}_2(\mathbb{R}^d))$ showing that the space $\mathbb{D}([0,T]; \mathcal{P}_2(\mathbb{R}^d))$ is complete under the metric \eqref{eq:metric}.
\end{proof}

\subsection*{Authors' addresses}\hfill\\
\noindent Neelima, Department of Mathematics, Ramjas College, University of Delhi, Delhi, 110 007, India. \\{\tt 
neelima\_maths@ramjas.du.ac.in} \\

\noindent Sani Biswas, Department of Mathematics, Indian Institute of Technology Roorkee,  Roorkee, 247 667, India.  \\{\tt 
sbiswas2@ma.iitr.ac.in}\\

\noindent Chaman Kumar, Department of Mathematics, Indian Institute of Technology Roorkee,  Roorkee, 247 667, India.  \\{\tt 
chaman.kumar@ma.iitr.ac.in}\\

\noindent Gon\c calo dos Reis, School of Mathematics, University of Edinburgh, Peter Guthrie Tait Road, Edinburgh, EH9 3FD, United Kingdom, and Centro de Matem\'atica e Aplica\c c$\tilde{\text{o}}$es (CMA), FCT, UNL, Portugal. \\{\tt
  G.dosReis@ed.ac.uk}\\

\noindent Christoph Reisinger, Mathematical Institute, University of Oxford.
Andrew Wiles Building, Woodstock Road, Oxford, OX2 6GG, UK. \\{\tt christoph.reisinger@maths.ox.ac.uk}\\

\end{document}